\newtheorem*{rep@theorem}{\rep@title}
\newcommand{\newreptheorem}[2]{%
\newenvironment{rep#1}[1]{%
 \def\rep@title{#2 \ref{##1}}%
 \begin{rep@theorem}}%
 {\end{rep@theorem}}}
\newtheorem*{rep@cor}{\rep@ctitle}
\newcommand{\newrepcor}[2]{%
\newenvironment{rep#1}[1]{%
 \def\rep@ctitle{#2 \ref{##1}}%
 \begin{rep@cor}}%
 {\end{rep@cor}}}
\newtheorem{thm}{Theorem}[section]
\newtheorem{lemma}[thm]{Lemma}
\newtheorem{prop}[thm]{Proposition}
\newtheorem{cor}[thm]{Corollary}
\newtheorem{theorem}[thm]{Theorem}
\newtheorem*{thma}{Theorem A}
\newtheorem*{thmb}{Theorem B}
\newtheorem*{thmc}{Theorem C}
\newtheorem*{thmd}{Theorem D}
\newtheorem*{core}{Corollary E}
\newtheorem*{corf}{Corollary F}
\theoremstyle{definition}
\newtheorem{definition}[thm]{Definition}
\newtheorem{example}[thm]{Example}
\newtheorem{examples}[thm]{Examples}
\theoremstyle{remark}
\newtheorem{remark}[thm]{Remark}
\numberwithin{equation}{section}
\newcommand{\R}{\mathbb{R}}
\newcommand{\N}{\mathbb{N}}
\newcommand{\Z}{\mathbb{Z}}
\newcommand{\eps}{\epsilon}
\newcommand{\diam}{\text{diam}}
\newcommand\mmeas[1]{\smash{\overline{d}}\vphantom{d}^{#1}}
\newcommand\mmeasempty{\overline{d}}
\newcommand\mdim{\smash{\overline{D}}\vphantom{D}}
\newcommand\cmeas[1]{d^{#1}}
\newcommand\cmeasempty{d}
\newcommand\cdim{D}
\newcommand\covmeas[1]{d_{H}^{#1}}
\newcommand\covdim{D_{H}}
\newcommand{\hdist}{\mathop{\mathrm{dist_H}}}
\newcommand{\pmat}{P_M}
\newcommand{\pone}{\pi}
\newcommand{\ptwo}{\pi'}
\newcommand{\compact}{\mathcal{M}}
\newcommand{\vol}{\text{vol}}
\newcommand\matspace[2]{M_{{#1} \times {#2}}}
\newcommand\matmap[2]{[{#1}]_{#2}}
\newcommand\matmape[1]{[{#1}]}
\newcommand\restmatmap[2]{\overline{[{#1}]_{#2}}}
\newcommand\restmatmape[1]{\overline{[{#1}]}}
\newcommand\purerestmatmap[1]{\overline{[\, \cdot \,]_{#1}}}
\newcommand\purerestmatmape{\overline{[\, \cdot \,]}}
\newcommand\linmaps{\mathcal{L}}
\newcommand\nullspace[1]{\text{Null(}{#1}\text{)}}
\newcommand\imagespace[1]{\text{Range(}{#1}\text{)}}
\newcommand\rangespace[1]{\text{Range(}{#1}\text{)}}
\newcommand\proj[2]{\mathcal{P}^{#1}_{#2}}
\newcommand\projm[2]{\mu_{{#1},{#2}}}
\newcommand\oproj[2]{\mathcal{U}^{#1}_{#2}}
\newcommand\oprojm[2]{\nu^{#1}_{#2}}
\newcommand\goo{G}
\newcommand\mynextmap[2]{\rho}
\newcommand\myrestnextmap[2]{\overline{\rho}}
\newcommand\obtoorth{\varphi}
\newcommand\repnum[3]{R_{#1,#2}(#3)}
\newcommand\repnumsum[2]{S_{#1}(#2)}
\newcommand\restr[2]{{
  \left.\kern-\nulldelimiterspace 
  #1 
  \right|_{#2} 
  }}
\newcommand\Is{\mathcal{I}}
\title[Marstrand-type Theorems in $\Z^d$]{Marstrand-type Theorems for the Counting and Mass Dimensions in $\Z^d$}
\author{Daniel Glasscock}
\address{Department of Mathematics, The Ohio State University, 231 W. 18th Ave.,
Columbus, OH 43210}
\email{glasscock.4@math.osu.edu}
\urladdr{people.math.osu.edu/~glasscock.4}
\begin{document}

\begin{abstract}
The counting and (upper) mass dimensions of a set $A \subseteq \R^d$ are
\[\cdim(A) = \limsup_{\|C\| \to \infty} \frac{\log \big| \lfloor A \rfloor \cap C \big|}{\log \|C\|}, \quad \mdim(A) = \limsup_{\ell \to \infty} \frac{\log \big| \lfloor A \rfloor \cap [-\ell,\ell)^d \big|}{\log (2 \ell)},\]
where $\lfloor A \rfloor$ denotes the set of elements of $A$ rounded down in each coordinate and where the limit supremum in the counting dimension is taken over cubes $C \subseteq \R^d$ with side length $\|C\| \to \infty$. We give a characterization of the counting dimension via coverings: $\cdim(A) = \inf \{ \alpha \geq 0 \ | \ \covmeas\alpha(A) = 0 \}$ where
\[\covmeas\alpha(A) = \lim_{r \rightarrow 0} \limsup_{\|C\| \rightarrow \infty} \inf \left\{ \ \sum_i \left( \frac{\|C_i\|}{\|C\|} \right)^\alpha \ \middle| \ 1 \leq \|C_i\| \leq r \|C\| \right\}\]
in which the infimum is taken over cubic coverings $\{C_i\}$ of $A \cap C$. Then we prove Marstrand-type theorems for both dimensions. For example, almost all images of $A \subseteq \R^d$ under orthogonal projections with range of dimension $k$ have counting dimension at least $\min \big(k, \cdim(A) \big)$; if we assume $\cdim(A) = \mdim(A)$, then the mass dimension of $A$ under the typical orthogonal projection is equal to $\min \big(k, \cdim(A) \big)$. This work extends recent work of Y. Lima and C. G. Moreira.
\end{abstract}

\maketitle
\tableofcontents

\section{Introduction}

Notions of dimension for subsets of $\Z$ and $\Z^d$ have been studied by Naudts \cite{naudts1,naudts2}, Barlow and Taylor \cite{bat1,bat2}, Iosevich, Rudnev, and Uriarte-Tuero \cite{iosevich}, and, most recently, by Lima and Moreira \cite{lima}. Connections are made in each of these works to concepts and results from the traditional continuous theory of dimension. For example, Barlow and Taylor define analogues in the discrete setting to many of the classical dimensional quantities and describe the dimension of self-similar sets and random walks in the lattice $\Z^d$.

Lima and Moreira introduced the so-called counting dimension for subsets of $\Z$ and proved a Marstrand-type theorem for it. Marstrand's theorem is one of the fundamental theorems in geometric measure theory. Roughly speaking, Marstrand \cite{marstrand} showed that for a Borel set in the plane of Hausdorff dimension less than 1, almost all of its orthogonal projections have Hausdorff dimension equal to that of the original set.

The primary goal of the present work is to extend Lima and Moreira's counting dimension to subsets of $\Z^d$, give a characterization for it in terms of coverings, and strengthen their Marstrand-type result.

Let $A \subseteq \R^d$ and $\alpha \geq 0$. The $\alpha$-counting measures and the counting dimension of $A$ are defined to be
\[\cmeas \alpha(A) = \limsup_{\|C\| \to \infty} \frac{\big| \lfloor A \rfloor \cap C \big|}{\|C\|^\alpha}, \qquad \cdim(A) = \limsup_{\|C\| \rightarrow \infty} \frac{\log \big| \lfloor A \rfloor \cap C \big|}{\log \|C\|},\]
where $\lfloor A \rfloor$ denotes the set of elements of $A$ rounded down in each coordinate and where the limit supremum is taken over cubes $C \subseteq \R^d$ with side length $\|C\| \to \infty$. (This specializes to Lima and Moreira's definition of the counting dimension when $d = 1$ and $A \subseteq \Z$.) The counting measures and dimension are the base-point free versions of the (upper) mass measures and dimension, defined by
\[\mmeas \alpha(A) = \limsup_{\ell \to \infty} \frac{\big| \lfloor A \rfloor \cap [-\ell,\ell)^d \big|}{(2\ell)^\alpha}, \qquad \mdim(A) = \limsup_{\ell \to \infty} \frac{\log \big| \lfloor A \rfloor \cap [-\ell,\ell)^d \big|}{\log (2 \ell)}.\]
The 1-counting and 1-mass measures are the usual upper Banach density and upper density that measure the linear growth rate of a set on long intervals in $\Z$; sets of positive upper (Banach) density have important applications in combinatorics and ergodic theory (see, for example, \cite{bergelson} and \cite{furstenberg}). Sets of zero density may still be distinguished by differing rates of growth on long intervals by the $\alpha$-counting and $\alpha$-mass measures for $\alpha < 1$. Roughly speaking, the set $A$ exhibits $\cdim (A)^{-1}$-degree polynomial rate growth on some sequence of intervals with length tending to infinity.

The first main result is a characterization of the counting dimension via coverings. This characterization draws parallels to both the discrete Hausdorff dimension discussed in \cite{bat1,bat2} and the classical Hausdorff dimension. Specifically, for $A \subseteq \R^d$ and $\alpha \geq 0$, define
\[\covmeas \alpha(A) = \lim_{r \rightarrow 0} \limsup_{\|C\| \rightarrow \infty} \inf \left\{ \ \sum_i \left( \frac{\|C_i\|}{\|C\|} \right)^\alpha \ \middle| \ A \cap C \subseteq \bigcup_i C_i, \ 1 \leq \|C_i\| \leq r \|C\| \right\},\]
where the $C_i$'s are cubes in $\R^d$. We show that the resulting dimensional quantity $\covdim (A) = \inf \{\alpha \geq 0 \ | \ \covmeas\alpha(A) = 0 \}$ is equal to the counting dimension $\cdim(A)$.

\begin{thma}\label{thm:introequivalence}
For all $A \subseteq \R^d$, $\covdim(A) = \cdim(A)$.
\end{thma}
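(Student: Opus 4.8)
The plan is to prove the two inequalities $\covdim(A) \leq \cdim(A)$ and $\covdim(A) \geq \cdim(A)$ separately. Throughout I would write $N(C) = \big| \lfloor A \rfloor \cap C \big|$ and use that $\cdim(A) = \inf \{ \alpha : \cmeas{\alpha}(A) = 0 \}$, so that $\cmeas{\alpha}(A) = 0$ when $\alpha > \cdim(A)$ and $\cmeas{\alpha}(A) = \infty$ when $\alpha < \cdim(A)$. I would also record the monotonicity that makes the outer limit tractable: shrinking $r$ only tightens the constraint $\|C_i\| \leq r\|C\|$, so the bracketed quantity in the definition of $\covmeas{\alpha}$ is non-increasing in $r$, whence $\covmeas{\alpha}(A) = \sup_{r} \limsup_{\|C\| \to \infty} \inf \sum_i (\|C_i\|/\|C\|)^\alpha$.

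\textbf{Easy direction ($\covdim \leq \cdim$).} It suffices to show $\covmeas{\alpha}(A) = 0$ for every $\alpha > \cdim(A)$. Given a cube $C$ with $\|C\|$ large, I would cover $A \cap C$ by the half-open unit cubes based at the points $\lfloor a \rfloor$, $a \in A \cap C$; there are at most $N(C)$ of these (up to a negligible boundary correction), each of side $1 \leq r\|C\|$. Hence the infimum defining $\covmeas{\alpha}$ is at most $N(C)/\|C\|^\alpha$, and taking $\limsup_{\|C\| \to \infty}$ bounds it by $\cmeas{\alpha}(A) = 0$. Since this holds for each $r$, I conclude $\covmeas{\alpha}(A) = 0$.

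\textbf{Hard direction ($\covdim \geq \cdim$).} Here I would prove the implication $\covmeas{\alpha}(A) = 0 \Rightarrow \cdim(A) \leq \alpha$; the claim follows on taking the infimum over such $\alpha$. A single-scale mass-distribution argument does not suffice: placing the counting measure on $\lfloor A \rfloor \cap C$ gives no $\alpha$-regularity (precisely because $\cmeas{\alpha}(A) = \infty$ for $\alpha < \cdim(A)$), while working at an exponent $\cdim(A) + \eps$ and converting down to exponent $\alpha$ leaks a factor $\|C\|^{-2\eps}$ that kills the estimate. The remedy is to iterate the covering across scales. So suppose $\covmeas{\alpha}(A) = 0$, so that $\limsup_{\|C\| \to \infty} \inf \sum_i (\|C_i\|/\|C\|)^\alpha = 0$ for each fixed $r$. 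I would fix $r < 1/3$ and $\delta \in (0, 3^{-\alpha})$, then choose $L_0 \geq 1/r$ so large that every cube $C$ with $\|C\| \geq L_0$ admits a cover $A \cap C \subseteq \bigcup_i C_i$ with $1 \leq \|C_i\| \leq r\|C\|$ and $\sum_i (\|C_i\|/\|C\|)^\alpha \leq \delta$. Writing $F(C) = N(C)/\|C\|^\alpha$ and using $N(C) \leq \sum_i N(C_i^+)$, where $C_i^+$ enlarges $C_i$ by one unit to absorb the floor/boundary discrepancy (so $\|C_i^+\| \leq 3\|C_i\|$), a one-line computation yields the recursion
\[ F(C) \ \leq \ 3^\alpha \delta \cdot \max_i F(C_i^+) \qquad (\|C\| \geq L_0). \]
Because $3^\alpha \delta < 1$ and each $C_i^+$ has side at most $3r\|C\| < \|C\|$, iterating along a child of maximal $F$-value descends through strictly shrinking cubes until the side falls below $L_0$, where trivially $F \leq (L_0+1)^d$. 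Thus $F(C) \leq (L_0+1)^d$ for all $C$, i.e.\ $N(C) \leq (L_0+1)^d \|C\|^\alpha$, giving $\cdim(A) = \limsup_{\|C\| \to \infty} \frac{\log N(C)}{\log \|C\|} \leq \alpha$.

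\textbf{Main obstacle.} The crux is the hard direction, and in particular recognizing that no single scale can succeed and that one must instead exploit the self-similar, multi-scale structure of the covering via the recursion above. The two routine points I would check with care are (i) the feasibility condition $r\|C\| \geq 1$, which keeps the covering problem non-vacuous at every stage of the descent and is guaranteed by $L_0 \geq 1/r$; and (ii) the floor/boundary bookkeeping behind $N(C) \leq \sum_i N(C_i^+)$, which costs only harmless multiplicative constants absorbed by the choices of $r$ and $\delta$.
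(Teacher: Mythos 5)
Your proof is correct, and your easy direction ($\covdim \leq \cdim$, by covering with unit cubes) is exactly the paper's. For the hard direction, however, you take a genuinely different route. The paper's proof of Theorem \ref{thm:dimsareequiv} first reduces to an $\alpha$-counting regular subset $A' \subseteq A$ via Proposition \ref{prop:regularsubset} --- itself a nontrivial construction resting on the disjoint-cubes and thinning lemmas --- and then runs precisely the single-scale mass-distribution argument you dismiss: for a counting regular set the uniform bound $|\lfloor A'\rfloor \cap C_i| < K\|C_i\|^\alpha$ is available, so any admissible cover satisfies $\sum_i(\|C_i\|/\|C\|)^\alpha \geq |\lfloor A'\rfloor \cap C|\big/(K\|C\|^\alpha)$, whence $\covmeas\alpha(A') \geq \cmeas\alpha(A')/K > 0$. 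You are right that this argument fails without the regularity reduction, and your multi-scale descent $F(C) \leq 3^\alpha\delta\max_i F(C_i^+)$ is a valid way to bypass it entirely: it proves the stronger implication $\covmeas\alpha(A) = 0 \Rightarrow \cmeas\alpha(A) < \infty$ directly, with no appeal to regular subsets. What each approach buys: the paper's route is short given Proposition \ref{prop:regularsubset} (which it needs anyway for the Marstrand theorems) and yields the additional fact that $0 < \covmeas\alpha(A') \leq \cmeas\alpha(A') < \infty$ for regular sets; yours is self-contained and closer in spirit to the classical comparison of Hausdorff and box dimensions. One point to tighten: a cover of $A\cap C$ need not control $\lfloor A\rfloor \cap C$, since a point $z \in \lfloor A\rfloor \cap C$ can arise as $\lfloor a \rfloor$ for some $a \in A\setminus C$; the clean fix is to apply the covering hypothesis to $C$ enlarged by one unit, which only changes $3^\alpha$ to $9^\alpha$ in the recursion and is absorbed by shrinking $\delta$ --- consistent with the ``harmless constants'' you flag.
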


The second main result is a strengthening and generalization of the following Marstrand-type theorem of Lima and Moreira.  For $\lambda \in \R$ and $A, B \subseteq \R$, let $\lambda A = \{ \lambda a \ | \ a \in A\}$ and $A+B = \{a + b \ | \ a \in A, b \in B \}$.

\begin{thmb}[\cite{lima}]\label{thm:introlima}
Let $A, B \subseteq \Z$ be regular and compatible. For Lebesgue almost every $\lambda \in \R$,
\[\cdim \big(A + \lfloor \lambda B \rfloor \big) \geq \min \big( 1, \cdim(A) + \cdim(B) \big).\]
Moreover, if $\cdim(A) + \cdim(B) > 1$, then for Lebesgue almost every $\lambda \in \R$, the set $A + \lfloor \lambda B \rfloor$ has positive upper Banach density.
\end{thmb}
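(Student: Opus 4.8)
The plan is to run a discrete version of Kaufman's potential-theoretic proof of Marstrand's theorem, with the classical $s$-energy replaced by a second-moment (additive energy) count and the Frostman measure replaced by the truncated sets themselves. Write $a = \cdim(A)$ and $b = \cdim(B)$, and fix exponents $s_A < a$, $s_B < b$. Using that $A$ and $B$ are regular and compatible, I would first extract a single sequence of scales $N_k \to \infty$ along which both sets simultaneously exhibit their growth, so that $|A \cap [0,N_k)| \gtrsim N_k^{s_A}$ and $|B \cap [0,N_k)| \gtrsim N_k^{s_B}$, while at the same time regularity supplies the matching upper bounds $|A \cap I| \lesssim |I|^{s_A + \eps}$ and $|B \cap I| \lesssim |I|^{s_B+\eps}$ on all subintervals $I \subseteq [0,N_k)$. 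It is exactly these two-sided estimates that the hypotheses ``regular and compatible'' are there to guarantee, and having them available on a common sequence is what lets the two sets interact.

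For each $k$ and each $\lambda$ in a fixed bounded interval $J$, introduce the representation-counting function
\[
f_\lambda(n) = \#\big\{(x,y) \in (A \cap [0,N_k)) \times (B \cap [0,N_k)) \ \big| \ x + \lfloor \lambda y \rfloor = n \big\},
\]
whose support is contained in $A + \lfloor \lambda B \rfloor$ intersected with an interval of length $O(N_k)$. Since $\sum_n f_\lambda(n) = |A \cap [0,N_k)|\,|B \cap [0,N_k)| \gtrsim N_k^{s_A + s_B}$, Cauchy--Schwarz gives
\[
\big| \{n : f_\lambda(n) > 0\} \big| \ \geq \ \frac{\big(\sum_n f_\lambda(n)\big)^2}{\sum_n f_\lambda(n)^2} \ \gtrsim \ \frac{N_k^{2(s_A+s_B)}}{E_\lambda(k)}, \qquad E_\lambda(k) := \sum_n f_\lambda(n)^2,
\]
so the whole problem reduces to an upper bound on the additive energy $E_\lambda(k)$, which counts quadruples $(x,y,x',y')$ with $x + \lfloor\lambda y\rfloor = x' + \lfloor\lambda y'\rfloor$.

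The heart of the argument is to average $E_\lambda(k)$ over $\lambda \in J$ and exploit transversality. The diagonal $y = y'$ (which forces $x = x'$) contributes exactly $N_k^{s_A + s_B}$. For the off-diagonal I would use the elementary estimate
\[
\text{Leb}\big\{ \lambda \in J \ \big| \ x + \lfloor\lambda y\rfloor = x' + \lfloor\lambda y'\rfloor \big\} \ \lesssim \ \frac{1}{|y - y'|} \qquad (y \neq y'),
\]
which plays the role of the transversality input, together with the observation that the measure is nonzero only when $x' - x$ lies in the interval $(y-y')J$. Bounding the inner count over $x,x'$ by the dimension estimate for $A$ and decomposing the pairs $(y,y')$ dyadically with the dimension estimate for $B$, I expect
\[
\int_J E_\lambda(k)\, d\lambda \ \lesssim \ N_k^{s_A + s_B} + N_k^{\,s_A}\!\!\sum_{y \neq y'} |y-y'|^{s_A - 1} \ \lesssim \ N_k^{\max(s_A + s_B,\ 2(s_A + s_B) - 1)}\,N_k^{\eps}.
\]
Feeding this into a Markov inequality and applying Borel--Cantelli along the sparse sequence $N_k$ controls $E_\lambda(k)$ for a.e.\ $\lambda$ on infinitely many scales; combined with the Cauchy--Schwarz lower bound, this yields $\big|(A + \lfloor\lambda B\rfloor)\cap[0,O(N_k)]\big| \gtrsim N_k^{\min(1,\,s_A+s_B)-\eps}$ infinitely often, hence $\cdim(A + \lfloor\lambda B\rfloor) \geq \min(1, s_A + s_B)$ for a.e.\ $\lambda$. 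Letting $s_A \uparrow a$, $s_B \uparrow b$ and $\eps \downarrow 0$ through countable sequences gives the first claim. When $a + b > 1$ the off-diagonal term dominates the energy and the same computation pins the count at $\gtrsim N_k$, a positive proportion of the ambient interval, which is the source of the positive upper Banach density in the second claim.

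I expect the main obstacles to be two. First, the floor functions must be handled honestly: both the transversality measure bound and the localization $x'-x \in (y-y')J$ require tracking the $O(1)$ errors introduced by $\lfloor \cdot \rfloor$, and one must check these do not accumulate across the $\sim N_k^{2(s_A+s_B)}$ quadruples. Second, and more essential, is the bookkeeping between the limsup nature of $\cdim$ and the a.e.\ quantifier: the energy bound holds only in $L^1(d\lambda)$ on each fixed scale, so producing a single full-measure set of $\lambda$ that works simultaneously for the whole subsequence $N_k$ while $s_A,s_B,\eps$ also vary is delicate. This is especially sharp for the density statement, where one must run the averaging with \emph{no} logarithmic loss (a count of merely $N_k^{1-\eps}$ gives zero density), so upgrading the positive-measure conclusion to almost every $\lambda$ is where regularity and compatibility must be used most carefully.
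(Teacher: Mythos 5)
Your plan is essentially the paper's own argument (which follows Lima--Moreira): the representation function, Cauchy--Schwarz against the additive energy, the transversality estimate $\mathrm{Leb}\{\lambda \in J \mid x+\lfloor\lambda y\rfloor = x'+\lfloor\lambda y'\rfloor\} \lesssim |y-y'|^{-1}$, and the dyadic summation against the counting-bounded estimates are precisely Lemma \ref{lem:bestgeometry}, Lemma \ref{lem:bettercontrolonpairs}, and Proposition \ref{prop:bettersmallprojs} specialized to $d=2$, $k=1$, applied to the product set $A\times B$.

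The one step that fails as written is the appeal to Borel--Cantelli. Markov gives, for each scale $N_k$ and each $\delta>0$, a bad set of $\lambda$'s of measure $\lesssim\delta$; these measures do not tend to $0$ in $k$, so they are not summable and the first Borel--Cantelli lemma yields nothing, while the second requires an independence you do not have. If you instead force summability by letting the threshold degrade with $k$, you lose an unbounded factor that is harmless for the dimension inequality but destroys the positive-Banach-density conclusion (a count of $N_k/k^2$ has density zero). The correct closing move, used in the proof of Theorem \ref{thm:bettermarstrand}, exploits that the Markov bound $\lesssim\delta$ is \emph{uniform in the scale}: writing $G_\delta^n$ for the good set at scale $n$, one has $m\bigl(\compact\setminus\bigcap_m\bigcup_{n\geq m}G_\delta^n\bigr)=m\bigl(\bigcup_m\bigcap_{n\geq m}(\compact\setminus G_\delta^n)\bigr)\lesssim\delta$, so all but a set of $\lambda$'s of measure $\lesssim\delta$ are good at infinitely many scales, with the \emph{same} threshold $\delta$ at each; one then takes the union of the good $\lambda$'s over $\delta=1/m$. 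This is also where regularity must enter exactly as you suspect: it lets you run the whole computation at the exact exponents $s_A=\cdim(A)$, $s_B=\cdim(B)$ with no $\eps$-loss (the upper bounds come from finiteness of the counting measures, the lower bounds along a common sequence of scales from compatibility), which is what turns the infinitely-often count $\gtrsim\delta N_k$ into positive upper Banach density rather than merely dimension $1$.
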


To see the connection with Marstrand's theorem, note that the images of the product set $A \times B \subseteq \R^2$ under (oblique) projections $\R^2 \to \R$ are exactly sets of the form $A + \lambda B$ for $\lambda \in \R$.

We prove the following generalization of Theorem B.

\begin{thmc}\label{thm:introobliquemarstrand}
Let $A \subseteq \R^d$ and $0 \leq k \leq d$. If $\alpha \neq k$ is such that $\cmeas \alpha (A) > 0$, then for almost every projection $P: \R^d \to \R^d$ with range $\R^k \times \{0\}^{d-k}$, $\cmeas {\min(k,\alpha)} (PA) > 0$. In particular, for almost every such projection $P$,
\[\cdim \big( PA \big) \geq \min \big(k, \cdim(A) \big)\]
and, if $\cdim(A) > k$, then $\cmeas k (PA) > 0$.
\end{thmc}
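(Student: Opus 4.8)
The plan is to parameterize the relevant projections and reduce the statement to a second-moment estimate for the fibers of the projection. Any projection $P:\R^d\to\R^d$ with range $\R^k\times\{0\}^{d-k}$ is determined by its complementary null space and hence by a matrix $M\in\matspace{k}{d-k}$ via $P(x_1,x_2)=x_1-Mx_2$ for $x=(x_1,x_2)\in\R^k\times\R^{d-k}$; Lebesgue measure on $\matspace{k}{d-k}$ is the natural measure, and ``almost every $P$'' means almost every $M$. Since a countable union of null sets is null, it suffices to prove the conclusion for almost every $M$ in each cube $Q=[-R,R]^{k(d-k)}$, $R\in\N$. Because $\cmeas{\alpha}(A)>0$, I first fix $c>0$ and a sequence of cubes $C_n\subseteq\R^d$ with side lengths $L_n=\|C_n\|\to\infty$ so that the finite sets $A_n=\lfloor A\rfloor\cap C_n\subseteq\Z^d$ satisfy $|A_n|\ge cL_n^\alpha$.

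The key observation is that, since the first $k$ coordinates of $\lfloor A\rfloor$ are integers, $\lfloor P(x_1,x_2)\rfloor=x_1+\lfloor-Mx_2\rfloor$, so the projected set is a union of integer translates of slices: writing $F(y)=\{x_1\in\Z^k:(x_1,y)\in A_n\}$ and $v_M(y)=\lfloor-My\rfloor\in\Z^k$ for $y\in\Z^{d-k}$, one has $\lfloor PA_n\rfloor=\bigcup_y\big(F(y)+v_M(y)\big)$. Two slices overlap in exactly $\repnum{y}{y'}{v_M(y')-v_M(y)}$ points, where $\repnum{y}{y'}{w}=|\{(a,a')\in F(y)\times F(y'):a-a'=w\}|$, so the number of colliding pairs $W_n(M)=\sum_z g_M(z)^2$ (with $g_M(z)$ the multiplicity of cell $z$) equals $|A_n|+\sum_{y\ne y'}\repnum{y}{y'}{v_M(y')-v_M(y)}$. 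The analytic input is a transversality estimate: for $u\in\Z^{d-k}\setminus\{0\}$ the pushforward of Lebesgue measure on $Q$ under $M\mapsto v_M(u)$ has density $\lesssim_R\|u\|^{-k}$, which follows by Fubini over the rows of $M$, each row contributing a slab of width $\sim\|u\|^{-1}$. Integrating the overlap count and using $\sum_w\repnum{y}{y'}{w}=f(y)f(y')$ with $f(y)=|F(y)|$ yields $\int_Q W_n(M)\,dM\lesssim_R|A_n|+E_n$, where $E_n=\sum_{y\ne y'}f(y)f(y')\|y-y'\|^{-k}$ is the fiber energy.

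In the subcritical regime $\alpha<k$ the target is $\cmeas{\alpha}(PA)>0$, equivalently $|\lfloor PA_n\rfloor|\gtrsim L_n^\alpha$ along a subsequence (the image lies in a cube of side $\lesssim_RL_n$). By Cauchy--Schwarz $|\lfloor PA_n\rfloor|\ge|A_n|^2/W_n(M)$, so it suffices that $W_n(M)\lesssim|A_n|^2/L_n^\alpha$ for a.e.\ $M$ along a subsequence, which by Fatou's lemma reduces to $E_n\lesssim_R|A_n|^2/L_n^\alpha$. In the supercritical regime $\alpha>k$ the target is $\cmeas{k}(PA)>0$; here I would truncate the multiplicities at height $T\sim|A_n|/L_n^k$ and use $|\lfloor PA_n\rfloor|\ge T^{-1}\sum_z\min(g_M(z),T)\ge T^{-1}\big(|A_n|-T^{-1}W_n(M)\big)$ to again reduce to an upper bound on $W_n(M)$, hence on $E_n$. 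The ``in particular'' statements then follow formally: for the dimension bound take any $\alpha<\min(k,\cdim(A))$, note $\alpha\ne k$ and $\cmeas{\alpha}(A)=\infty>0$, apply the main claim to get $\cdim(PA)\ge\alpha$, and let $\alpha\uparrow\min(k,\cdim(A))$; if $\cdim(A)>k$, pick $k<\alpha<\cdim(A)$ to get $\cmeas{k}(PA)>0$.

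The main obstacle is the fiber-energy estimate $E_n\lesssim_R|A_n|^2/L_n^{\min(k,\alpha)}$, which \emph{fails} for some configurations: a full grid in the first $k$ coordinates sitting over a small set of $a_2$-values makes $E_n$ anomalously large even though $\lfloor PA_n\rfloor$ is large for trivial reasons. Thus the real work is to treat such degenerate, fiber-concentrated configurations separately: when a single slice $F(y)$, or the translates of a spread-out subfamily of slices, already account for $\gtrsim L_n^{\min(k,\alpha)}$ image points, one argues directly from the union-of-translates structure, and only on the complementary ``spread'' part does the second-moment bound apply. Making this dichotomy quantitative --- choosing the cubes $C_n$ and a sub-collection of fibers so that the representation numbers $\repnum{y}{y'}{\cdot}$ are simultaneously controlled --- is where the bulk of the argument lies, and is precisely what separates the subcritical conclusion ($\beta=\alpha$) from the supercritical one ($\beta=k$).
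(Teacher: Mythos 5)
Your overall architecture --- parameterizing $P$ by $M\in\matspace k{(d-k)}$, the transversality estimate $m\big(\{M\ :\ \lfloor P_M z\rfloor = \lfloor P_M z'\rfloor\}\big)\ll |z-z'|^{-k}$, bounding the expected collision count $\int S_{A_n}(M)\,dM$, and converting that into a lower bound on $\big|\lfloor P_M A_n\rfloor\big|$ via Cauchy--Schwarz and a Borel--Cantelli argument --- is exactly the paper's. But you have correctly diagnosed, and then left open, the one step that carries the real content: the energy bound $\int S_{A_n}(M)\,dM \ll |A_n|\,\|A_n\|^{\max(0,\alpha-k)}$ is false for general $A$ with $\cmeas\alpha(A)>0$, because the hypothesis places no \emph{upper} bound on how $A$ concentrates. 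Your proposed fix --- a quantitative dichotomy between ``fiber-concentrated'' and ``spread'' configurations, treating the concentrated part by hand --- is precisely the part you say is ``where the bulk of the argument lies,'' and as written it is not an argument: no selection of fibers is specified, no direct lower bound is given in the concentrated case, and it is unclear how the two regimes recombine measure-theoretically (the exceptional set of $M$ must be small uniformly in $n$ for the Borel--Cantelli step to close).

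The paper resolves this with a single structural reduction you are missing: by monotonicity of $\cmeas{\min(k,\alpha)}$ it suffices to prove the conclusion for a subset of $A$, and Proposition~\ref{prop:regularsubset} supplies an $\alpha$-counting \emph{regular} subset $A'\subseteq A$, i.e.\ one with $0<\cmeas\alpha(A')<\infty$. Finiteness of $\cmeas\alpha(A')$ is exactly a uniform bound $|A'\cap C|\le K\|C\|^\alpha$ over all cubes, and with that in hand the energy bound follows from a shell decomposition around each point combined with Lemma~\ref{lem:bestgeometry} (this is Lemma~\ref{lem:bettercontrolonpairs}; the hypothesis $\alpha\ne k$ enters precisely to make the resulting geometric series over shells sum to the right power of $\|A_n\|$). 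Your degenerate example --- a full grid in the first $k$ coordinates over few fibers --- is exactly a set that is not $(K,\alpha)$-counting bounded for $\alpha<k$, and the regular-subset extraction discards that concentration before the second-moment method is ever applied. Without this reduction, or a fully worked-out substitute for your dichotomy, the proof is incomplete. Two minor remarks: plain Cauchy--Schwarz handles the supercritical regime as well, since the energy bound already gives $|A_n|^2/S_{A_n}(M)\gg \|A_n\|^{k}$ there, so your truncation at height $T$ is an unnecessary complication; and your ``in particular'' deductions at the end are correct.
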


Complementing Theorem C, we give a concrete example showing that an increase in counting dimension is possible under the typical projection; that is, the inequality in Theorem C cannot be made an equality.  Specifically, we provide an example of a set $E \subseteq \Z$ of zero counting dimension such that for all $\lambda \neq 0$, the upper Banach density of the sumset $E + \lambda E$ is positive.

In the same spirit as Theorem C, we derive the following Marstrand-type theorem for the mass dimension. The definitions regarding regularity are given in Section \ref{sec:regularity}.

\begin{thmd}
Let $A \subseteq \R^d$ be such that $\cdim(A) = \mdim(A)$, and let $0 \leq k \leq d$. For almost every projection $P: \R^d \to \R^d$ with range $\R^k \times \{0\}^{d-k}$,
\[\mdim \big( PA \big) = \min \big(k, \mdim(A) \big)\]
and, if $A$ is counting and mass regular and $\mdim(A) \neq k$, then $\mmeas{\min (k, \mdim(A))}(P A) > 0$.
\end{thmd}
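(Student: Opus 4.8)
The plan is to sandwich $\mdim(PA)$ between matching bounds, using Theorem~C for the lower estimate and the hypothesis $\cdim(A)=\mdim(A)$ to convert its base-point-free conclusion into an origin-anchored one for the mass measures. Throughout write $\beta=\min(k,\mdim(A))$, and note that each such $P$ is linear with $P(0)=0$, so $P[-\ell,\ell)^d$ lies inside an origin-centered cube $[-c\ell,c\ell)^k\times\{0\}^{d-k}$ with $c$ uniformly bounded. The upper bound $\mdim(PA)\le\beta$ holds for every $P$: since $PA\subseteq\R^k\times\{0\}^{d-k}$, an origin-centered cube meets $\lfloor PA\rfloor$ in at most $\sim(2\ell)^k$ points, giving $\mdim(PA)\le k$; and the covering characterization of Theorem~A yields the Lipschitz-type bound $\cdim(PA)\le\cdim(A)$ (a cubic cover of $A\cap C$ projects to a cubic cover of $PA\cap PC$ of comparable diameters), so with $\mdim\le\cdim$ and the hypothesis, $\mdim(PA)\le\cdim(A)=\mdim(A)$; together these give $\mdim(PA)\le\beta$.

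For the lower bound I would fix $\alpha<\beta$ (hence $\alpha<k$ and $\alpha<\cdim(A)$) and show $\mmeas\alpha(PA)>0$ for a.e. $P$, then intersect the resulting null exceptional sets over a sequence $\alpha_n\uparrow\beta$. Since $\alpha<\mdim(A)$, the measure $\mmeas\alpha(A)$ is infinite, so there are origin-centered windows $Q_n=[-\ell_n,\ell_n)^d$ with $|\lfloor A\rfloor\cap Q_n|\gg\ell_n^\alpha$. I would then apply the single-scale transversality estimate that underlies the proof of Theorem~C to the finite sets $A_n=\lfloor A\rfloor\cap Q_n$: for a.e. $P$ the fibers of $P$ meet $A_n$ with controlled multiplicity, so that $|\lfloor PA_n\rfloor|\gtrsim\ell_n^\alpha$, the inequality $\alpha<k$ guaranteeing there is room in the $k$-dimensional target. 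Because $P(Q_n)\subseteq[-c\ell_n,c\ell_n)^k$, these surviving points lie in an origin-centered target cube, whence $\mmeas\alpha(PA)\gtrsim\limsup_n|\lfloor PA\rfloor\cap[-c\ell_n,c\ell_n)^k|/(2c\ell_n)^\alpha>0$, forcing $\mdim(PA)\ge\beta$.

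For the critical exponent I would rerun the localized argument at the endpoint under the regularity hypotheses. If $\mdim(A)>k$ then $\beta=k$ and mass regularity supplies origin-centered windows with $|\lfloor A\rfloor\cap Q_n|\gg\ell_n^{k}$; as a side-$\sim\ell_n$ target cube holds only $\sim\ell_n^k$ lattice points, the estimate forces $PA$ to fill a positive fraction and $\mmeas k(PA)>0$. If $\mdim(A)<k$ then $\beta=\mdim(A)$, and counting (resp. mass) regularity is precisely what provides windows at the critical exponent $\alpha=\cdim(A)=\mdim(A)\ne k$, so the same estimate and origin-centering give $\mmeas{\mdim(A)}(PA)>0$; the excluded case $\mdim(A)=k$ is exactly the one in which Theorem~C's hypothesis $\alpha\ne k$ fails. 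The \emph{main obstacle} is this counting-to-mass passage: Theorem~C as stated only delivers good cubes somewhere in the target, whereas the mass dimension demands them at the origin. My plan is therefore not to cite Theorem~C as a black box but to reuse its single-scale projection estimate fed with the origin-centered windows that $\cdim(A)=\mdim(A)$ makes available; the technical heart will be verifying that this estimate survives restriction to a fixed sequence of symmetric windows and that the implied exceptional set of projections remains null after intersecting over $\alpha_n\uparrow\beta$.
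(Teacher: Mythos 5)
Your lower bound follows the paper's route (Proposition \ref{prop:relatemassandcounting}): feed origin-centered windows into the single-scale transversality/energy estimate behind Theorem C and rerun at the critical exponent under the regularity hypotheses. One caution there: the energy bound of Lemma \ref{lem:bettercontrolonpairs} requires the finite sets $A_n$ to be $(K,\alpha')$-counting bounded, which forces $\alpha'>\cdim(A)$, not the $\alpha<\mdim(A)$ you use to size the windows. When $\mdim(A)>k$ this costs a factor $\|A_n\|^{-(\alpha'-k)}$, and windows with merely $|\lfloor A\rfloor\cap Q_n|\gg\ell_n^\alpha$ are then not enough; the paper instead takes windows along which $A$ achieves its full mass dimension and lets $\alpha'\downarrow\cdim(A)=\mdim(A)$, which is precisely where the hypothesis $\cdim(A)=\mdim(A)$ enters the lower bound. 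This is fixable, but as written your estimate $|\lfloor PA_n\rfloor|\gtrsim\ell_n^\alpha$ does not follow from the cited machinery in that case.

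The genuine gap is the upper bound. You claim $\mdim(PA)\le\mdim(A)$ holds for \emph{every} $P$ because ``a cubic cover of $A\cap C$ projects to a cubic cover of $PA\cap PC$ of comparable diameters.'' This Lipschitz-type argument fails here: the counting and covering measures are windowed, so to bound $\covmeas\alpha(PA)$ you must cover $PA\cap C'$ for cubes $C'$ in the range, and $P^{-1}(C')$ is an unbounded slab, so points of $A$ arbitrarily far from any fixed source cube $C$ can project into $C'$; a cover of $A\cap C$ controls only $P(A\cap C)$, which is in general a tiny piece of $PA\cap PC$. The paper shows this failure is real, not just a gap in the argument: Section \ref{sec:dimincrease} produces a product set of counting dimension $0$ whose image under $P_\lambda$ is thick (hence of counting dimension $1$) for \emph{every} $\lambda\neq0$, and the remark preceding Proposition \ref{prop:upperboundondim} notes a set in $\Z^2$ of mass dimension $0$ with $\lfloor P_\lambda A\rfloor=\Z$ for all rational $\lambda$. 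So neither $\cdim(PA)\le\cdim(A)$ nor $\mdim(PA)\le\mdim(A)$ holds pointwise, and the upper bound is itself only an almost-everywhere statement requiring its own transversality input: Lemma \ref{lem:lemmaforupperbound} bounds the expected number of occupied lattice points of $\lfloor P_MA\rfloor$ in $I_n^d$ by $O(n^\alpha)$ for $(K,\alpha)$-mass bounded $A$ with $\alpha<k$, and Chebyshev plus Borel--Cantelli along dyadic scales (Lemma \ref{lem:dimalongexpcubes}, Proposition \ref{prop:upperboundondim}) yields $\mdim(P_MA)\le\alpha+\eps$ for a.e.\ $M$. Without this half your sandwich does not close, and it is the part of the theorem that genuinely needs the hypothesis $\cdim(A)=\mdim(A)$ together with a measure-theoretic, not deterministic, argument.
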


We then deduce from Theorems C and D related Marstrand-type results. For example, we prove the following orthogonal projection variant which is reminiscent in formulation of Mattila's generalization of Marstrand's original theorem.

\begin{core}
Let $A \subseteq \R^d$ be such that $\cdim(A) = \mdim(A)$, and let $0 \leq k \leq d$. For almost every orthogonal projection $U$ with range of dimension $k$,
\[\mdim(UA) = \min \big(k, \mdim(A) \big)\]
and, if $A$ is counting and mass regular and $\mdim(A) \neq k$, then $\mmeas{\min (k,\mdim(A))}(U A) > 0$.
\end{core}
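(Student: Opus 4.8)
The plan is to deduce Corollary E from Theorem D by realizing each orthogonal projection onto a $k$-plane as the image, under an invertible linear map, of one of the oblique coordinate projections appearing in Theorem D, and then transporting the phrase ``almost every'' from the matrix parametrization of Theorem D (with its measure $\projm{k}{d}$) to the natural $O(d)$-invariant measure $\oprojm{k}{d}$ on the Grassmannian $\mathrm{Gr}(k,d)$ of $k$-dimensional subspaces of $\R^d$. First I would discard a null set of planes: the set of $V \in \mathrm{Gr}(k,d)$ failing to project isomorphically onto $W := \R^k \times \{0\}^{d-k}$ has $\oprojm{k}{d}$-measure zero, so it suffices to treat planes of the graph form $V_N = \{(x, Nx) : x \in \R^k\}$ with $N \in \matspace{d-k}{k}$.

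For such a plane, writing $B = \begin{pmatrix} I \\ N \end{pmatrix}$, the orthogonal projection onto $V_N$ is $U_{V_N} = B(B^TB)^{-1}B^T$, and a direct computation gives, for $(u,v) \in \R^k \times \R^{d-k}$,
\[ U_{V_N}(u,v) = \big( w, Nw \big), \qquad w = (I + N^TN)^{-1}(u + N^Tv). \]
With $M = N^T \in \matspace{k}{d-k}$, the oblique projection $\pmat(u,v) = (u + Mv, 0)$ onto $W$ with kernel $\{(-Mv,v)\}$ is exactly one of the projections parametrized in Theorem D. Defining the linear map $T$ by $T(w, Nw) = ((I+N^TN)w, 0)$ on $V_N$ and extending it to a linear automorphism of $\R^d$, one checks that $T \circ U_{V_N} = \pmat$ (the extension is irrelevant since $U_{V_N}$ takes values in $V_N$), and hence $\pmat A = T(U_{V_N}A)$.

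Because $T$ is an invertible linear map of $\R^d$, the basic fact that bi-Lipschitz---in particular, invertible linear---maps preserve the counting and mass dimensions, and preserve positivity of the mass measures up to a multiplicative constant, yields $\mdim(U_{V_N}A) = \mdim(\pmat A)$ and $\mmeas{\alpha}(U_{V_N}A) > 0 \iff \mmeas{\alpha}(\pmat A) > 0$ for every $\alpha$. Theorem D, which applies since $\cdim(A) = \mdim(A)$, then gives for Lebesgue-almost every $M \in \matspace{k}{d-k}$ both $\mdim(\pmat A) = \min(k, \mdim(A))$ and, when $A$ is counting and mass regular and $\mdim(A) \neq k$, $\mmeas{\min(k,\mdim(A))}(\pmat A) > 0$. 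Transposing $M = N^T$, this holds for almost every $N$, and since $N \mapsto V_N$ is a smooth chart of $\mathrm{Gr}(k,d)$ under which $\oprojm{k}{d}$ pulls back to a measure mutually absolutely continuous with Lebesgue measure on $\matspace{d-k}{k}$, it holds for $\oprojm{k}{d}$-almost every $V$, which is exactly Corollary E.

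I expect the main technical point to be this last measure-theoretic transfer: verifying that ``almost every $M$'' in the matrix parametrization of Theorem D corresponds to ``almost every plane'' for the rotation-invariant measure on $\mathrm{Gr}(k,d)$. Concretely, this reduces to checking that the graph chart $N \mapsto V_N$ is a diffeomorphism onto a full-measure open subset of the Grassmannian and that $\oprojm{k}{d}$ has a smooth, strictly positive density in these coordinates, so that null sets correspond to null sets. By comparison, the explicit computation relating $U_{V_N}$ to $\pmat$ and the invariance of the dimensions under linear bijections are routine, the latter presumably already recorded among the elementary properties of the counting and mass dimensions.
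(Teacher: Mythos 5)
Your proposal is correct and follows essentially the same route as the paper: your correspondence $V_N \leftrightarrow \pmat$ with $M = N^{T}$ is exactly the paper's association of $U$ to the oblique projection $P$ with $\nullspace P^\perp = \rangespace U$ (indeed $\{(x,Nx)\} = \{(-N^{T}v,v)\}^{\perp}$), your map $T$ restricted to $V_N$ is exactly $\restr{\pmat}{\nullspace{\pmat}^\perp}$ so that the identity $T\circ U_{V_N}=\pmat$ is the coordinate form of the paper's $P\circ U = P$, and the measure transfer rests on the same smooth-chart/measure-class facts (the paper routes it through the measure-preserving map $V\mapsto V^{\perp}$ plus the chart equivalence already recorded in its Section 3.1, together with a measurability check via its Lemma \ref{lem:measurablemap}). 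The only difference is that you work with explicit matrix formulas where the paper argues abstractly; no gap.
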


Typicality in these results is with respect to the unique $O(d)$-invariant probability measure on the Grassmannian $G(d,k)$ under the association of a projection with its null space (in Theorems C and D) or with its range (in Corollary E).

The primary applications of Theorems C and D are similar to those obtained by Lima and Moreira and follow from the fact that projections of the product set $A_1 \times \cdots \times A_d \subseteq \R^d$ to $\R \times \{0\}^{d-1}$ are sets of the form $A_1 + \lambda_1 A_2 + \cdots + \lambda_{d-1} A_d$ for $\lambda \in \R^{d-1}$. Thus, Theorems C and D give us information on the counting and mass dimensions of the typical sumset of dilated sets. The following is an example of such an application.

\begin{corf}
For each $1 \leq i \leq d$, let $f_i \in \R[x]$ be a non-constant polynomial. For Lebesgue-almost every $\lambda \in \R^d$,
\[\mdim \big( \lambda_1 f_1(\Z) + \cdots + \lambda_d f_d(\Z) \big) = \min \left(1, \frac{1}{\deg f_1} + \cdots + \frac{1}{\deg f_d} \right).\]
Moreover, if $\sum_i (\deg f_i)^{-1} \neq 1$, then for almost every $\lambda \in \R^d$,
\[\mmeas{\min \left(1,\sum_i (\deg f_i)^{-1} \right)} \big(\lambda_1 f_1(\Z) + \cdots + \lambda_d f_d(\Z) \big) > 0.\]
\end{corf}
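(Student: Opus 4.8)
The plan is to realize the sumset as a projection of the product set $A = f_1(\Z) \times \cdots \times f_d(\Z) \subseteq \R^d$ and then invoke Theorem D with $k = 1$. First I would compute the dimensions of $A$. For a single non-constant $f \in \R[x]$ of degree $n$, the estimate $|f(m)| \asymp |m|^n$ together with the fact that consecutive values eventually differ by more than $1$ gives $|\lfloor f(\Z) \rfloor \cap [-\ell,\ell)| \asymp \ell^{1/n}$; tracking the worst-case placement of a cube of side $s$ (which occurs where $f$ is flattest) yields $\asymp s^{1/n}$ points in that generality as well. Hence $\cdim(f(\Z)) = \mdim(f(\Z)) = 1/n$ and $f(\Z)$ is both counting and mass regular. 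Since $\lfloor A \rfloor = \prod_i \lfloor f_i(\Z) \rfloor$, the count over any cube factors as a product of one-dimensional counts, so $\cdim(A) = \mdim(A) = \sum_i (\deg f_i)^{-1}$ and $A$ is counting and mass regular.

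With these inputs, Theorem D applies to $A$ with $k = 1$. Away from a Grassmannian-null set, the projections $P \colon \R^d \to \R^d$ with range $\R \times \{0\}^{d-1}$ are exactly the maps $x \mapsto (x_1 + c_2 x_2 + \cdots + c_d x_d)\, e_1$ parameterized by $(c_2, \ldots, c_d) \in \R^{d-1}$ (the null space being the hyperplane $x_1 + c_2 x_2 + \cdots + c_d x_d = 0$), and under the $e_1$-coordinate identification $PA = f_1(\Z) + c_2 f_2(\Z) + \cdots + c_d f_d(\Z)$. In this chart the $O(d)$-invariant measure on the Grassmannian is mutually absolutely continuous with Lebesgue measure, so Theorem D gives, for Lebesgue-almost every $(c_2, \ldots, c_d)$, that $\mdim\big(f_1(\Z) + c_2 f_2(\Z) + \cdots + c_d f_d(\Z)\big) = \min\big(1, \sum_i (\deg f_i)^{-1}\big)$, and, when $\sum_i (\deg f_i)^{-1} \neq 1$, positivity of the corresponding $\mmeas{\beta}$ with $\beta = \min(1, \sum_i (\deg f_i)^{-1})$.

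To reach Lebesgue-almost every $\lambda \in \R^d$, I would factor out the first coefficient: for $\lambda_1 \neq 0$, $\lambda_1 f_1(\Z) + \cdots + \lambda_d f_d(\Z) = \lambda_1\big(f_1(\Z) + (\lambda_2/\lambda_1) f_2(\Z) + \cdots + (\lambda_d/\lambda_1) f_d(\Z)\big)$. Since $\mdim$ is invariant under the nonzero dilation by $\lambda_1$ and the vanishing or positivity of $\mmeas{\beta}$ is likewise preserved, the asserted event holds at $\lambda$ iff it holds at $(1, \lambda_2/\lambda_1, \ldots, \lambda_d/\lambda_1)$. Fixing $\lambda_1 \neq 0$ and rescaling the parameter by $\lambda_1$ (which preserves Lebesgue-null sets) shows the event holds for almost every $(\lambda_2, \ldots, \lambda_d)$; as this holds for every $\lambda_1 \neq 0$, Fubini delivers the conclusion for Lebesgue-almost every $\lambda \in \R^d$.

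The \emph{main obstacle} is the moreover clause. The dimension equality follows quickly from Theorem D and the product count, but the positivity of $\mmeas{\beta}$ requires $A$ to be counting and mass regular, so the real work is establishing regularity of each polynomial image $f_i(\Z)$ via the uniform worst-case cube count and checking that regularity is inherited by the product, as well as confirming that dilation by a nonzero scalar preserves positivity of $\mmeas{\beta}$ despite the rounding built into the definition. The remaining delicate point is the measure-theoretic translation—absolute continuity of the Grassmannian measure in the $(c_2,\ldots,c_d)$ chart together with the dilation-and-Fubini step—that converts Theorem D's Grassmannian-typicality into Lebesgue-typicality in $\lambda$.
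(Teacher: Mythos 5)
Your argument is correct and takes essentially the same route as the paper: the two-sided estimate $\big|\lfloor f(\Z)\rfloor\cap[-\ell,\ell)\big|\asymp \ell^{1/\deg f}$ (valid for every $\ell$) is precisely what the paper uses to show each $f_i(\Z)$ is counting and mass regular and strongly counting and mass universal, so that the product $f_1(\Z)\times\cdots\times f_d(\Z)$ is counting and mass regular of dimension $\sum_i(\deg f_i)^{-1}$, and the rest is Theorem D with $k=1$ read in the flat chart on the Grassmannian followed by the dilation-and-Fubini homogenization. The only difference is cosmetic: you verify the simultaneous achievement of the dimensions along common cube lengths directly from the factorized counts rather than invoking the paper's compatibility/universality lemmas, but the substance is identical.
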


This paper is organized into two parts. In the first, we develop the basic properties of the counting and mass dimensions for subsets of $\R^d$ and prove Theorem A. In the second part, we prove Theorems C and D, derive from them additional Marstrand-type results (including Corollaries E and F), and give some applications.

\section{The counting and mass dimensions}

The upper mass dimension for subsets of $\Z^d$ was introduced by Barlow and Taylor in \cite{bat1, bat2}, and the counting dimension for subsets of $\Z$ was introduced by Lima and Moreira in \cite{lima}. In this section, we define the upper mass and counting dimensions for subsets of $\R^d$ and develop the basic properties. Many of these properties do not appear elsewhere in the literature, so in some cases for completeness we go beyond the material strictly necessary for the Marstrand-type theorems in the second part.

\subsection{Definitions and basic properties}

Let $\N = \{1,2,\ldots\}$ be the set of natural numbers and $\Z$ be the integers. All sequences in this work will be indexed by $\N$, and we will write $(x_n)_n$ as a shorthand for $(x_n)_{n\in\N}$. For a finite set $A$, denote by $|A|$ its cardinality.

\begin{definition}
A \emph{cube} in $\R^d$ will refer to a set of the form
\[C = \prod_{i=1}^d [a_i,a_i + \ell),\]
where $a = (a_1, a_2, \ldots, a_d) \in \R^d$ and $\ell > 0$. The cube $C$ is \emph{based at} $a$ and has side length $\|C\| = \ell$. The cube $C$ is \emph{centered} if it is symmetric about the origin.
\end{definition}

\begin{definition}
The floor $\lfloor a \rfloor$ of a real number $a$ is the greatest integer less than or equal to $a$. The floor function $\lfloor \, \cdot \, \rfloor$ is applied coordinate-wise to elements of $\R^d$ and element-wise to subsets of $\R^d$; in other words, for $A \subseteq \R^d$, $\lfloor A \rfloor$ denotes the set consisting of elements of $A$ ``rounded down'' in each coordinate.
\end{definition}

Now we may define the counting and (upper) mass dimensions.

\begin{definition}
Let $A \subseteq \R^d$ and $\alpha \geq 0$. The \emph{$\alpha$-counting measure} of $A$ is
\[\cmeas{\alpha,d}(A) = \limsup_{\| C \| \rightarrow \infty} \frac{\big| \lfloor A \rfloor \cap C \big|}{\|C\|^\alpha} := \lim_{\ell \rightarrow \infty} \sup_{\|C\| \geq \ell} \frac{\big| \lfloor A \rfloor \cap C \big|}{\|C\|^\alpha},\]
where the limit supremum and supremum are taken over cubes $C \subseteq \R^d$. The \emph{counting dimension} of $A$ is
\[\cdim^d(A) = \inf \left\{ \alpha \geq 0 \ \middle| \ \cmeas{\alpha,d}(A) = 0  \right\}.\]
\end{definition}

The counting dimension captures the maximal polynomial rate of growth on larger and larger cubes with respect to the cubes' side lengths. The (upper) mass dimension captures this maximal growth rate along centered cubes.

\begin{definition}
Let $A \subseteq \R^d$ and $\alpha \geq 0$. The \emph{$\alpha$-mass measure} of $A$ is
\[\mmeas{\alpha,d}(A) = \limsup_{\substack{\| C \| \rightarrow \infty \\ C \text{ centered}}} \frac{\big| \lfloor A \rfloor \cap C \big|}{\|C\|^\alpha} = \limsup_{\ell \to \infty} \frac{\big| \lfloor A \rfloor \cap [-\ell, \ell)^d \big|}{(2\ell)^\alpha}.\]
The \emph{(upper) mass dimension} of $A$ is
\[\mdim^d(A) = \inf \left\{ \alpha \geq 0 \ \middle| \ \mmeas{\alpha,d}(A) = 0  \right\}.\]
\end{definition}

\begin{remark}
It is immediate from the definitions that $\mmeas\alpha \leq \cmeas\alpha$ and $\mdim \leq \cdim$.
\end{remark}

Rounding down to the integer lattice $\Z^d$ is for mere computational convenience. In Section \ref{sec:invunderqie} we will show that the counting and mass dimensions are invariant under quasi-isometric embeddings. This will allow us to adopt a course geometry perspective and realize the counting and mass dimensions as measures of the rate of growth of sets ``at infinity.''

For brevity, we will drop the word ``upper'' and simply refer to the upper mass dimension as the mass dimension. When the ambient space is apparent (it will usually be $\R^d$), we will omit the dimension exponent and simply write $\cmeas\alpha$, $\mmeas\alpha$, $\cdim$, and $\mdim$ instead of $\cmeas{\alpha,d}$, $\mmeas{\alpha,d}$, $\cdim^d$, and $\mdim^d$. The exponent $\alpha$ will always be understood to be greater than or equal to $0$.

Finally, several of the results for the counting measures and dimension hold equally as well for the mass measures and dimension with minor modifications to the proofs. The phrase ``The same conclusions hold for the mass measures and dimension.'' means that the preceding statements hold with $\cmeasempty$ and $\cdim$ replaced by $\mmeasempty$ and $\mdim$, respectively.

\begin{lemma}\label{lem:basiccountingprops}
Let $A \subseteq \R^d$ and $\alpha \geq 0$. Then
\begin{enumerate}[i.]
\item for all $\alpha' \leq \alpha$, $\cmeas{\alpha'}(A) \geq \cmeas\alpha (A)$;
\item for all $A' \subseteq A$, $\cmeas\alpha(A') \leq \cmeas\alpha(A)$ and $\cdim(A') \leq \cdim(A)$;
\item $0 \leq \cdim(A) \leq d$;
\item if $\alpha < \cdim(A)$, then $\cmeas\alpha(A) = \infty$;
\item if $\alpha > \cdim(A)$, then $\cmeas\alpha(A) = 0$.
\end{enumerate}
If $A$ is non-empty, the counting dimension may be computed explicitly as
\begin{align}\label{eqn:explicitformula}\cdim(A) = \limsup_{\|C\| \rightarrow \infty} \frac{\log \big| \lfloor A \rfloor \cap C \big|}{\log \| C\|}.\end{align}

The same conclusions hold for the mass measures and dimension with the limit supremum in (\ref{eqn:explicitformula}) taken over centered cubes.
\end{lemma}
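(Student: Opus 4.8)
The plan is to establish (i)--(iii) directly from the definitions, deduce (v) and then (iv) from the monotonicity in (i), and finally prove the explicit formula (\ref{eqn:explicitformula}) by a two-sided comparison of counts with powers of the side length. I would begin with the monotonicity statement (i): since the limit supremum only sees cubes with $\|C\| \to \infty$, we may assume $\|C\| \geq 1$, whence $\|C\|^{\alpha'} \leq \|C\|^\alpha$ for $\alpha' \leq \alpha$; dividing the nonnegative count $|\lfloor A \rfloor \cap C|$ reverses the inequality termwise, and passing to the limit supremum gives $\cmeas{\alpha'}(A) \geq \cmeas{\alpha}(A)$. For (ii), the observation $\lfloor A' \rfloor \subseteq \lfloor A \rfloor$ yields $|\lfloor A' \rfloor \cap C| \leq |\lfloor A \rfloor \cap C|$ for every cube, giving the measure inequality, and the dimension inequality follows because $\{\alpha \mid \cmeas{\alpha}(A) = 0\} \subseteq \{\alpha \mid \cmeas{\alpha}(A') = 0\}$. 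For (iii), $\cdim(A) \geq 0$ is immediate since the infimum ranges over $\alpha \geq 0$; for the upper bound I would use the elementary lattice-point count $|\lfloor A \rfloor \cap C| \leq |\Z^d \cap C| \leq (\|C\| + 1)^d$, so that for $\alpha > d$ the ratio $|\lfloor A \rfloor \cap C|/\|C\|^\alpha \leq (\|C\|+1)^d/\|C\|^\alpha \to 0$, forcing $\cmeas{\alpha}(A) = 0$ and hence $\cdim(A) \leq d$.

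Next, monotonicity (i) shows the set $Z = \{\alpha \mid \cmeas{\alpha}(A) = 0\}$ is upward closed, so it is an interval with left endpoint $\cdim(A)$. Item (v) is then immediate: if $\alpha > \cdim(A)$ there is $\beta \in Z$ with $\beta < \alpha$, and (i) gives $\cmeas{\alpha}(A) \leq \cmeas{\beta}(A) = 0$. For (iv), if $\alpha < \cdim(A)$ then $\alpha \notin Z$, so $\cmeas{\alpha}(A) > 0$; to upgrade this to $+\infty$ I would pick $\beta$ with $\alpha < \beta < \cdim(A)$, note that $\cmeas{\beta}(A) > 0$ provides cubes of arbitrarily large side length with $|\lfloor A \rfloor \cap C|/\|C\|^{\beta}$ bounded below by a positive constant, and observe that along such cubes
\[\frac{|\lfloor A \rfloor \cap C|}{\|C\|^{\alpha}} = \|C\|^{\beta - \alpha} \cdot \frac{|\lfloor A \rfloor \cap C|}{\|C\|^{\beta}} \longrightarrow \infty,\]
since $\beta - \alpha > 0$ and $\|C\| \to \infty$. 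This amplification is the crux of the dichotomy and is the step I expect to require the most care, as one must first extract cubes with both large side length and count bounded below before multiplying by the growing factor $\|C\|^{\beta - \alpha}$.

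Finally, for the explicit formula with $A \neq \emptyset$, write $\beta^*$ for the right-hand limit supremum and prove $\cdim(A) = \beta^*$ by two inequalities, both resting on the same amplification idea. For $\cdim(A) \leq \beta^*$: given $\alpha > \beta^*$, choose $\alpha' \in (\beta^*, \alpha)$; the definition of $\beta^*$ yields $\ell_0$ with $|\lfloor A \rfloor \cap C| < \|C\|^{\alpha'}$ for all $\|C\| \geq \ell_0$, whence $|\lfloor A \rfloor \cap C|/\|C\|^\alpha < \|C\|^{\alpha' - \alpha} \to 0$ and $\cmeas{\alpha}(A) = 0$. For $\cdim(A) \geq \beta^*$: given $\alpha < \beta^*$, choose $\alpha' \in (\alpha, \beta^*)$; the definition of $\beta^*$ furnishes cubes of arbitrarily large side length with $|\lfloor A \rfloor \cap C| > \|C\|^{\alpha'}$, so $|\lfloor A \rfloor \cap C|/\|C\|^\alpha > \|C\|^{\alpha' - \alpha} \to \infty$, giving $\cmeas{\alpha}(A) = \infty$ and thus $\alpha \leq \cdim(A)$. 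Non-emptiness is used only to ensure that $|\lfloor A \rfloor \cap C|$ is positive on a cofinal family of cubes, so that $\beta^*$ is not vacuously $-\infty$.

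Every argument above refers only to the ratio $|\lfloor A \rfloor \cap C|/\|C\|^\alpha$ and to the monotone structure of the limit supremum, none of which distinguishes arbitrary cubes from centered cubes (the lattice-point bound in (iii) applies equally to centered cubes). Consequently the same proofs yield the corresponding statements for $\mmeas{\alpha}$ and $\mdim$ verbatim after restricting attention to centered cubes, which also gives the explicit formula for the mass dimension with the limit supremum taken over centered cubes.
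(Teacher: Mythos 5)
Your proposal is correct and follows essentially the same route as the paper: monotonicity and subset comparison read off from the definition, the lattice-point count for the upper bound $d$, the amplification by $\|C\|^{\beta-\alpha}\to\infty$ along cubes realizing a positive $\beta$-measure to get the $0$/$\infty$ dichotomy, and a two-sided comparison for the explicit formula (you bound $\cdim$ by the log-limsup where the paper bounds the log-limsup by $\cdim$ via (iv) and (v), but the underlying estimates are identical). No gaps.
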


\begin{proof}
Properties \emph{i.} and \emph{ii.} are immediate from the definitions. Property \emph{iii.} follows from the fact that $\cmeas\alpha \equiv 0$ when $\alpha > d$. Property \emph{v.} is immediate from the definition of $\cdim$ and the monotonicity of $\cmeas\alpha(A)$ in $\alpha$ (property \emph{i.}).

To prove \emph{iv.}, it suffices by monotonicity in $\alpha$ to show that if $\alpha$ is such that $\cmeas{\alpha}(A) > 0$, then for all $0 \leq \alpha' < \alpha$, $\cmeas{\alpha'}(A) = \infty$. So, suppose $\cmeas{\alpha}(A) > 0$. There exists an $\eps > 0$ and a sequence of cubes $(C_n)_n$, $\|C_n\| \rightarrow \infty$, such that
\[\lim_{n \rightarrow \infty} \frac{\big| \lfloor A \rfloor \cap C_n \big|}{\|C_n\|^{\alpha}} > \eps.\]
For all $0 \leq \alpha' < \alpha$, since $\lim_{n \rightarrow \infty} \|C_n\|^{\alpha - \alpha'} = \infty$,
\[\cmeas{\alpha'}(A) = \limsup_{\|C\| \rightarrow \infty} \frac{\big| \lfloor A \rfloor \cap C \big|}{\|C\|^{\alpha'}} \geq \limsup_{n \rightarrow \infty} \|C_n\|^{\alpha - \alpha'}\frac{\big| \lfloor A \rfloor \cap C_n \big|}{\|C_n\|^{\alpha}} = \infty.\]
To prove (\ref{eqn:explicitformula}), let $\alpha < \cdim(A)$, and pick, by \emph{iv.}, a sequence of cubes $(C_n)_n$, $\|C_n\| \rightarrow \infty$, such that for all $n \in \N$, $\big| \lfloor A \rfloor \cap C_n \big| \big/ \|C_n\|^\alpha \geq 1$. Then
\[\limsup_{\|C\| \rightarrow \infty} \frac{\log \big| \lfloor A \rfloor \cap C \big|}{\log \| C\|} \geq \limsup_{n \rightarrow \infty} \frac{\log \big| \lfloor A \rfloor \cap C_n \big|}{\log \| C_n\|} \geq \alpha.\]
Since $\alpha < \cdim(A)$ was arbitrary $\limsup_{\|C\| \rightarrow \infty} \frac{\log | \lfloor A \rfloor \cap C |}{\log \| C\|} \geq \cdim(A)$. For the reverse inequality, let $\alpha > \cdim(A)$. It follows from \emph{v.} that there exists a $0 < K < \infty$ such that for all cubes $C$, $\big| \lfloor A \rfloor \cap C \big| \leq K \|C\|^\alpha$. It follows that
\[\limsup_{\|C\| \rightarrow \infty} \frac{\log \big| \lfloor A \rfloor \cap C \big|}{\log \| C\|} \leq \alpha.\]
Since $\alpha > \cdim(A)$ was arbitrary, $\limsup_{\|C\| \rightarrow \infty} \frac{\log | \lfloor A \rfloor \cap C |}{\log \| C\|} \leq \cdim(A)$.
\end{proof}

It will often be convenient to choose a specific sequence of cubes along which a set achieves its counting or mass dimension.

\begin{definition}\label{def:achievesdimension}
A non-empty set $A \subseteq \R^d$ \emph{achieves its counting dimension along the sequence of cubes} $(C_n)_n \subseteq \R^d$, $\|C_n\| \rightarrow \infty$, if
\[\lim_{n \rightarrow \infty} \frac{\log \big| \lfloor A \rfloor \cap C_n \big|}{\log \|C_n\|} = \cdim(A).\]
If the cubes $(C_n)_n$ are centered and the limit equals $\mdim(A)$, then $A$ \emph{achieves its mass dimension along $(C_n)_n$}.
\end{definition}

We conclude this section by showing that the counting and mass measures are finitely sub-additive.

\begin{lemma}\label{lem:countingprops}
Let $A_1, \ldots, A_m \subseteq \R^d$. Then
\[\cmeas\alpha \left( \bigcup_{i=1}^m A_i \right) \leq \sum_{i=1}^m \cmeas\alpha(A_i) \quad \text{and} \quad \cdim \left( \bigcup_{i=1}^m A_i \right) = \max_{1 \leq i \leq m} \cdim(A_i).\]
The same conclusions hold for the mass measures and dimension.
\end{lemma}

\begin{proof} It suffices to prove the statements for two sets $A, A' \subseteq \R^d$. Since $\lfloor A \cup A' \rfloor$ is $\lfloor A \rfloor \cup \lfloor A' \rfloor$, we have
\begin{align*}
\cmeas\alpha(A \cup A') &= \limsup_{\|C\| \rightarrow \infty} \frac{\big| \lfloor A \cup A' \rfloor \cap C \big|}{\|C\|^\alpha} \\
&= \limsup_{\|C\| \rightarrow \infty} \frac{\big|(\lfloor A \rfloor \cup \lfloor A' \rfloor) \cap C \big|}{\|C\|^\alpha} \\
&\leq \limsup_{\|C\| \rightarrow \infty} \frac{\big|\lfloor A \rfloor \cap C \big|}{\|C\|^\alpha} + \limsup_{\|C\| \rightarrow \infty} \frac{\big|\lfloor A' \rfloor \cap C \big|}{\|C\|^\alpha}\\
&= \cmeas\alpha(A) + \cmeas\alpha(A').
\end{align*}

It follows from the definition of $\cdim$ and the subadditivity of $\cmeas\alpha$ that
\begin{align*}
\cdim(A \cup A') & = \inf \big\{ \alpha \in \R \ \big| \ \cmeas\alpha(A \cup A') = 0 \big\}\\
& \leq \inf \big\{ \alpha \in \R \ \big| \ \cmeas\alpha(A) = 0 \text{ and } \cmeas\alpha(A') = 0\big\} \\
& = \max \big( \inf \big\{ \alpha \in \R \ \big| \ \cmeas\alpha(A) = 0 \big\}, \inf \big\{ \alpha \in \R \ \big| \ \cmeas\alpha(A') = 0 \big\} \big)\\
& = \max \big( \cdim(A), \cdim(A') \big).
\end{align*}
\end{proof}

\subsection{Examples}\label{sec:examples}

Here we collect some examples of sets and their counting and mass dimensions. Examples i. - v. are in $\R$.

\begin{enumerate}[i.]
\item Any set with positive upper Banach density has counting dimension 1, while any set with positive upper density has mass dimension 1.  There are sets of zero upper density of mass dimension 1; by the prime number theorem, the set of prime numbers is such. If $A$ contains arbitrarily long intervals, i.e. if $A$ is thick, then clearly $\cdim(A) = 1$; it is easy to construct thick sets which are of zero mass dimension. 
\item Let $f: \R \rightarrow \R$ be a real polynomial of degree $n \geq 1$. The image of $\Z$ under $f$ has counting and mass dimension $1/n$, as can be directly computed (or, see \cite{lima}). Along the same lines, for $\beta > 0$, it is straightforward to check that
\[\cdim\big(\{n^\beta \ | \ n \in \N\}\big) = \mdim\big(\{n^\beta \ | \ n \in \N\}\big) = \min \big(\beta^{-1},1 \big).\]
If $A = \{0 < a_1 < a_2 < \cdots \}$ is such that $a_n \leq n^\beta$ for infinitely many $n$, then $\mdim(A) \geq \min \big(\beta^{-1},1\big)$. Therefore, if $\mdim(A) < \alpha \leq 1$, there exists a $\beta > \alpha^{-1}$ such that $a_n > n^\beta$ eventually; in particular, this implies that $\sum a_n^{-\alpha} < \infty$. The full converse does not hold: the sequence $\big(\big( n (\log n)^2 \big)^{1 / \alpha}\big)_n$ has mass dimension $\alpha$ and is such that $\sum a_n^{-\alpha}$ converges. For a partial converse, note that if $\mdim(A) > \alpha$, then $\big|A \cap [-\ell, \ell) \big| \big/ (2 \ell)^\alpha \to \infty$ as $\ell \to \infty$, implying that $\sum a_n^{-\alpha} = \infty$. This implies that
\[\mdim(A) = \inf \left\{ \alpha \geq 0 \ \middle| \ \sum a_n^{-\alpha} < \infty \right\}.\]
There is no such statement for the counting dimension since the counting dimension of $A$ is unrelated to the concentration of $A$ about the origin.
\item For any $r > 1$, the geometric sequence $\big(r^n\big)_{n}$ has zero counting and mass dimension. In fact, both dimensions are 0 for any lacunary sequence.
\item \label{item:integercantor} (Following \cite{lima}) Let $E$ be those positive integers that may be written in base 3 using only the digits 0 and 1. The set $E$ has counting and mass dimension $\log 2 \big/ \log 3$.  More generally, let $b \geq 2$, and let $M = (m_{i,j})_{0 \leq i,j \leq b-1}$ be a $b \times b$, binary transition matrix. Consider the \emph{integer Cantor set}
\[E_M = \big\{d_0 b^0 + \cdots + d_n b^n \ | \ n \geq 0, \ m_{d_{i-1},d_i} = 1 \text{ for all } 1 \leq i \leq n \big\}.\]
The counting and mass dimension of $E_M$ are
\[\cdim(E_M) = \mdim(E_M) = \frac{\log \lambda_+(M)}{\log b},\]
where $\lambda_+(M)$ denotes the Perron-Frobenius eigenvalue of $M$. Even more generally, for a closed, left-shift invariant subset $X \subseteq \{0, 1,\ldots, b-1\}^{\N}$, consider the set
\[E_X = \big\{\omega_1 b^0 + \cdots + \omega_{n+1} b^{n} \ | \ n \geq 0, \ \omega \in X \big\}.\]
It follows from calculations very similar to those in Lima and Moreria and the connection between the exponential growth rate of words in $X$ to $h_{\text{top}}(X)$, the topological entropy of $X$, that
\[\cdim(E_X) = \mdim(E_X) = \frac{h_{\text{top}}(X)}{\log b}.\]
\item \label{item:genipset} (Following \cite{lima}) Given two sequences $(k_n)_n$ and $(d_n)_n$ of positive integers satisfying $d_{n+1} > \sum_{i=1}^n k_i d_i$, consider the associated \emph{generalized IP set}
\[E = \left\{ \sum_{i=1}^n x_i d_i \ \middle| \ n \in \N, 0 \leq x_i < k_i \right\}.\]
It can be shown that the counting and mass dimensions of $E$ are
\[\cdim(E) = \mdim(E) = \limsup_{n \to \infty} \frac{\log (k_1 \cdots k_n) }{\log (k_n d_n)}.\]
\item The counting and mass dimensions are preserved under quasi-isometric embeddings (see Section \ref{sec:invunderqie}). For example, if $A = \{a_1 < a_2 < \cdots \} \subseteq \Z$, then the map $a_n \mapsto (n, a_n)$ from $A$ to its graph $G \subseteq \Z^2$ is a quasi-isometric embedding (actually, the map is bi-Lipschitz). Therefore, $\cdim(G) = \cdim(A)$ and $\mdim(G) = \mdim(A)$.
\item Examples in higher dimensions may be obtained by taking Cartesian products (see Section \ref{sec:products}). There is an important notion of compatibility between sets introduced in \cite{lima}; see Definition \ref{def:compatible}. In short, any two sets $A, B \subseteq \R$ satisfy
\[\max \big( \cdim(A),\cdim(B) \big) \leq \cdim(A \times B) \leq \cdim(A) + \cdim(B),\]
with equality on the right hand side if and only if $A$ and $B$ are counting compatible. The same inequality holds for $\mdim$. For example, if $f$ is a real, non-constant polynomial and $E$ is the integer Cantor set from example \ref{item:integercantor}., then the counting and mass dimensions of $f(\Z) \times E$ in $\R^2$ are $(\deg f)^{-1} + \log 2 / \log 3$.
\end{enumerate}

Barlow and Taylor \cite{bat2} consider the upper mass dimension (among other dimensional quantities) of self-similar sets in $\R^d$ and of random walks on the lattice $\Z^d$. The reader is encouraged to consult their work for further examples regarding the upper mass dimension.

\subsection{Regularity and regular subsets}\label{sec:regularity}

An important role in the traditional continuous theory of dimension is played by $s$-sets, those with non-zero, finite $s$-Hausdorff measure. The analogous notion of regularity in this setting is developed below; it was defined and used in the one-dimensional case by Lima and Moreira in \cite{lima}, and we follow their terminology.

\begin{definition}
A set $A \subseteq \R^d$ is \emph{$\alpha$-counting regular} if $0 < \cmeas\alpha (A) < \infty$; more succinctly, the set $A$ is \emph{counting regular} if $0< \cmeas{\cdim(A)}(A) < \infty$. Similarly, the set $A$ is \emph{$\alpha$-mass regular} if $0 < \mmeas\alpha (A) < \infty$ and simply \emph{mass regular} if $0< \mmeas{\mdim(A)}(A) < \infty$.
\end{definition}

A fundamental fact is that a set $A \subseteq \R^d$ contains $\alpha$-counting regular subsets for every $\alpha < \cdim(A)$. The following proposition is best seen as an analogue to the well known fact that sets of Hausdorff dimension greater than $\alpha$ contain subsets with finite, positive $\alpha$-Hausdorff measure (see Mattila \cite{mattilabook}, Chapter 8).

\begin{prop}\label{prop:regularsubset}
Let $A \subseteq \R^d$. If $\cmeas\alpha(A) = \infty$, then there exists an $\alpha$-counting regular subset of $A$. In particular, for all $0 \leq \alpha < \cdim(A)$, there exists an $\alpha$-counting regular subset of $A$.
\end{prop}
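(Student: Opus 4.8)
The plan is to reduce to a combinatorial selection problem on $\Z^d$ and then carry out an explicit ``thinning'' construction along a sequence of disjoint dyadic cubes. First I would note that since $\cmeas\alpha$ depends only on $\lfloor A \rfloor$, and since any finite nonempty set is $0$-counting regular, I may assume $0<\alpha<d$ (the cases $\alpha=0$ and $\alpha \geq d$ being trivial or vacuous, as $\cmeas\alpha \leq 1$ when $\alpha=d$ and $\cmeas\alpha\equiv 0$ when $\alpha>d$). Writing $S=\lfloor A \rfloor\subseteq\Z^d$, it suffices to produce $S'\subseteq S$ with $0<\cmeas\alpha(S')<\infty$ and then lift: choosing one $a_s\in A$ with $\lfloor a_s\rfloor=s$ for each $s\in S'$ gives $A'\subseteq A$ with $\lfloor A' \rfloor=S'$, so $\cmeas\alpha(A')=\cmeas\alpha(S')$. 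Throughout I would work with half-open dyadic cubes of side $2^m$; the basic reduction is that a uniform bound $|S'\cap Q|\le K\|Q\|^\alpha$ over all dyadic cubes $Q$ already forces $\cmeas\alpha(S')\le 2^{d+\alpha}K<\infty$, since any cube of side $\ell$ meets at most $2^d$ dyadic cubes of side $2^{\lceil\log_2\ell\rceil}\le 2\ell$.

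\emph{Construction.} I would select pairwise \emph{disjoint} dyadic cubes $Q_1,Q_2,\dots$ with strictly increasing side lengths $\|Q_k\|=2^{n_k}\to\infty$ along which $S$ has high density, and inside each $Q_k$ choose a ``spread-out'' subset $T_k\subseteq S\cap Q_k$, finally setting $S'=\bigcup_k T_k$. Since $\cmeas\alpha(S)=\infty$ there are, for every $M$, dyadic cubes of density $|S\cap Q|/\|Q\|^\alpha\ge M$; crucially, the lattice-point bound $|S\cap Q|\le\|Q\|^d$ forces any such cube to have side at least $2^{(\log_2 M)/(d-\alpha)}$, so taking $Q_k$ to be a density-$\ge M_k$ cube of \emph{minimal} side (with $M_k\to\infty$) makes $n_k\to\infty$ automatically, while every proper dyadic subcube of $Q_k$ has density $<M_k$. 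This ``non-concentration'' property is exactly what makes the thinning work.

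\emph{Thinning.} Inside a fixed $Q=Q_k$ I want $T\subseteq S\cap Q$ with $|T\cap Q'|\le\|Q'\|^\alpha$ for every dyadic $Q'\subseteq Q$ and $|T|\ge\tfrac12\|Q\|^\alpha$. These constraints form a laminar (tree) family, so by max-flow/min-cut the maximum size of such a $T$ equals $\min_{\mathcal P}\sum_{Q'\in\mathcal P}\min(|S\cap Q'|,\|Q'\|^\alpha)$, the minimum over partitions $\mathcal P$ of $Q$ into dyadic subcubes. Non-concentration gives $|S\cap Q'|<M_k\|Q'\|^\alpha$ for every proper subcube, whence $\min(|S\cap Q'|,\|Q'\|^\alpha)\ge |S\cap Q'|/M_k$ and every partition-sum is at least $|S\cap Q|/M_k\ge\|Q\|^\alpha$; the top cap bounds $|T|\le\|Q\|^\alpha$, so the selection keeps a constant fraction, $|T_k|\ge\tfrac12\|Q_k\|^\alpha$. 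Then $|S'\cap Q_k|\ge|T_k|\ge\tfrac12\|Q_k\|^\alpha$ yields $\cmeas\alpha(S')\ge\tfrac12>0$. For the upper bound I would bound $|S'\cap Q|$ for an arbitrary dyadic $Q$ of side $2^m$ using the nested-or-disjoint trichotomy: the cubes $Q_k\subseteq Q$ contribute $\sum_{n_k\le m}2^{n_k\alpha}\le(1-2^{-\alpha})^{-1}\|Q\|^\alpha$ by distinctness of scales, disjointness forces at most one $Q_k\supseteq Q$ (contributing $\le\|Q\|^\alpha$ via its cap), and the remaining $Q_k$ contribute nothing; hence $|S'\cap Q|\le K\|Q\|^\alpha$ and $\cmeas\alpha(S')<\infty$.

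\emph{Main obstacle.} The delicate point is obtaining the $Q_k$ pairwise \emph{geometrically} disjoint while keeping them large and high-density: disjointness is precisely what rules out an infinite nested tower of cubes, which would otherwise let $S'$ saturate a neighborhood and blow $\cmeas\alpha(S')$ up to infinity. Here I would use that $\cmeas\alpha$ is unchanged under deleting a bounded (hence finite) subset, so $\cmeas\alpha(S\setminus B)=\infty$ for every bounded $B$; combined with the fact that density-$\ge M$ cubes occur at unboundedly large scales (and so cannot be confined to any fixed box), this lets me inductively locate each $Q_k$ disjoint from $Q_1\cup\cdots\cup Q_{k-1}$, after a pigeonhole passing from a large high-density cube to a dyadic subcube disjoint from the forbidden box that still carries a definite share of the mass. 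Finally, the ``In particular'' clause is immediate from Lemma~\ref{lem:basiccountingprops}(iv): $\alpha<\cdim(A)$ gives $\cmeas\alpha(A)=\infty$, to which the construction applies.
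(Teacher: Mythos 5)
Your proposal is correct in outline but takes a genuinely different route from the paper's, so a comparison is in order. The paper also reduces to $\Z^d$, extracts pairwise disjoint cubes of unbounded density (Lemma \ref{lem:disjointcubes}), thins each finite piece, and verifies a two-sided bound on $\cmeas\alpha$ of the union; but its thinning (Lemma \ref{lem:techlemredo}) keeps one point per cell of an auxiliary grid and then deletes points one at a time until the quantity $\sup_{\|C\|\ge 1}|F\cap C|/\|C\|^\alpha$ lands in $[2^d,2^d+1)$, and its global upper bound comes from spacing the cubes so far apart that any cube meeting two of them is enormous. Your replacement of both steps by dyadic structure is a legitimate alternative: choosing each $Q_k$ of \emph{minimal} side among dyadic cubes of density $\ge M_k$ gives the non-concentration hypothesis for free; the max-flow/min-cut identity on the laminar family of dyadic subcubes then produces the thinned set with a clean two-sided cardinality bound (modulo replacing $\|Q'\|^\alpha$ by $\lfloor\|Q'\|^\alpha\rfloor$ for integrality, which your factor $\tfrac12$ absorbs); and the nested-or-disjoint trichotomy together with distinctness of the scales $n_k$ replaces the paper's separation condition. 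What the paper's version buys is elementarity; what yours buys is that minimality does the work of the paper's explicit ``spread-out'' selection.

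The one step you must not leave as a sketch is the selection of the $Q_k$ pairwise disjoint, which you correctly identify as the main obstacle. Passing to $S\setminus B$ only guarantees that the counted \emph{points} avoid the forbidden box $B$; the high-density dyadic cube $Q$ you then find may still straddle $B$, and your proposed fix --- a pigeonhole to a dyadic subcube disjoint from $B$ carrying ``a definite share of the mass'' --- fails if read at a single scale: for $\|Q\|>\|B\|$ exactly one dyadic child of $Q$ meets $B$, and that child can carry essentially all of the mass, at every level of the recursion, so no fixed scale yields a constant share. What does work is to sum over all scales: the portion of $Q$ away from $B$ is covered by at most $2^d-1$ $B$-disjoint dyadic cubes at each of the scales $2^{n-1},2^{n-2},\dots$ down to roughly $\|B\|$, so if every $B$-disjoint dyadic cube had density $<M$ the total count in $Q$ would be at most $(2^d-1)M\sum_{j<n}2^{j\alpha}+O_B(1)\ll_{\alpha,d}M\|Q\|^\alpha+O_B(1)$, contradicting density $\ge M'$ once $M'\gg_{\alpha,d}M$. (The paper sidesteps this with a different device in Lemma \ref{lem:disjointcubes}: it partitions $\R^d$ into $3^d$ coordinate slabs determined by the forbidden box, finds an unbounded slab $P$ with $\cmeas\alpha(A\cap P)=\infty$, and \emph{translates} a high-density cube for $A\cap P$ off of the box while retaining $C'\cap P$.) With that step filled in, your argument goes through.
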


The case $d=1$ was proven by Lima and Moreira. The proof below is similar in spirit and goes as follows. First, we transform this into a problem about finite sets by passing to a subset of $A$ which consists of pieces that are sufficiently distant or ``disjoint.'' We then thin this subset on each piece separately in a controlled manner to achieve the desired dimension. We begin with two lemmas.

\begin{lemma}\label{lem:disjointcubes}
Let $A \subseteq \R^d$. If $\cmeas\alpha(A) = \infty$, then there exists a sequence of pairwise disjoint cubes $(C_n)_n$, $\|C_n\| \rightarrow \infty$, for which
\begin{align}\label{eqn:limisinfinity} \lim_{n\rightarrow \infty} \frac{\big|\lfloor A \rfloor \cap C_n\big|}{\|C_n\|^\alpha} = \infty.\end{align}
\end{lemma}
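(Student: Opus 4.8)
The plan is to reduce the statement to a single-step existence claim and then build the sequence greedily. Concretely, it suffices to prove: for every bounded cube $B \subseteq \R^d$ and all $T, S > 0$, there is a cube $C$ with $C \cap B = \emptyset$, $\|C\| \geq S$, and $|\lfloor A \rfloor \cap C| / \|C\|^\alpha \geq T$. Granting this, construct $(C_n)_n$ inductively: having chosen pairwise disjoint $C_1, \dots, C_k$, let $B$ be a cube containing all of them and apply the claim with $T = S = k+1$ to produce $C_{k+1}$ disjoint from $B$ (hence from each $C_i$), with $\|C_{k+1}\| \geq k+1$ and ratio $\geq k+1$. Then $\|C_n\| \to \infty$ and the ratios tend to $\infty$, which is exactly \eqref{eqn:limisinfinity}.

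To prove the single-step claim, I would use that $\cmeas\alpha(A) = \infty$ forces the supremum in the definition to be infinite at every scale, so there is a cube $D$ with $\|D\|$ as large as needed and ratio $N/\|D\|^\alpha \geq T_0$, where $N = |\lfloor A \rfloor \cap D|$ and $T_0 \gg T$ is chosen later. The elementary point is that any bounded region meets $\lfloor A \rfloor \subseteq \Z^d$ in only finitely many points; in particular $|\lfloor A \rfloor \cap B| \leq P$ for a constant $P$ depending only on $B$. Hence $D \setminus B$ still carries $\geq N - P$ points, and $N - P \geq N/2$ once $\|D\|$ (or $T_0$) is large. This is precisely the mechanism behind the transparency of the one-dimensional case of Lima and Moreira: there $D \setminus B$ is a union of at most two intervals, so one of them — itself a cube — inherits at least half of the surviving points, and therefore a ratio that still grows with $T_0$.

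For general $d$ I would decompose $D \setminus B$ (equivalently $D \setminus (B \cap D)$) into a bounded number, at most $2d$, of axis-parallel boxes disjoint from $B$, obtained by peeling off one coordinate slab at a time. By pigeonhole one such box $R$ contains at least $(N-P)/(2d)$ of the surviving points, and it remains to extract from $R$ an honest cube that is both large and of high ratio. This extraction is the step I expect to be the main obstacle: a box from the decomposition may be highly anisotropic (even thinner than one lattice unit in some directions), so tiling $R$ by cubes of its shortest side is useless, while tiling by small cubes destroys the ratio when the points are spread thinly.

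The way I would handle it is to extract along the long directions of $R$ rather than the short ones, treating the thin directions as effectively lower-dimensional. One places candidate cubes whose side $\ell$ is comparable to the longest extent of $R$ but which still avoid $B$ — possible because such a cube merely covers a window of $R$ on the far side of the thin slab separating $R$ from $B$ — and pigeonholes the $\geq (N-P)/(2d)$ points of $R$ over the windows of side $\ell$ covering $R$ in its long directions, obtaining a cube that captures a definite fraction of them. Throughout, the packing bound $|\lfloor A \rfloor \cap Q| \leq (\|Q\|+1)^d$ both controls the contribution of the thin directions and forces the output cube to be large, since a cube capturing $M$ points has side at least $M^{1/d} - 1$; meanwhile the ratio degrades from $T_0$ only by a multiplicative constant depending on $d$ and $\alpha$. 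Taking $T_0$ a sufficiently large multiple of $T$ then yields the claim.
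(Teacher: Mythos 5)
Your argument is correct, but it takes a genuinely different route from the paper's. You fix the good cube $D$ first and then perform geometric surgery on $D \setminus B$: peel it into at most $2d$ boxes, each lying entirely beyond one face of $B$, pigeonhole to find a box $R$ carrying a $\frac{1}{2d}$-fraction of the surviving points, and extract a large cube from the possibly anisotropic $R$. The extraction step you flag as the main obstacle is actually simpler than your description suggests: every side of $R$ is at most its longest extent $L \leq \|D\|$, so a \emph{single} cube of side $L$, placed flush against the coordinate hyperplane separating $R$ from $B$, already contains all of $R$ and is disjoint from $B$; no pigeonholing over windows is needed, the ratio degrades only by the constant $2d$ from the peeling, and the packing bound $\big|\lfloor A \rfloor \cap C\big| \leq (\|C\|+1)^d$ forces the output side length up, exactly as you say. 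The paper instead localizes \emph{before} choosing the cube: it partitions $\R^d$ into $3^d$ pieces aligned with the previously chosen cubes, uses finite subadditivity of $\cmeas\alpha$ (Lemma \ref{lem:countingprops}) to find an unbounded piece $P$ disjoint from them with $\cmeas\alpha(A \cap P) = \infty$, chooses the good cube $C'$ for $A \cap P$, and then merely \emph{translates} $C'$ so that it still contains $C' \cap P$ but clears the old cubes. That translation trick sidesteps the anisotropy issue entirely, since the cube is never reshaped and no constant is lost; your version is more hands-on and elementary but hinges on the (correct) observation that each peeled box sits wholly on the far side of one face of $B$, which is what lets the containing cube of side $L$ avoid $B$.
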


\begin{proof}
Let $C = \prod_{i=1}^d [a_i,b_i)$ be a cube in $\R^d$. For each $1 \leq i \leq d$, consider the partition $\big(-\infty,\lfloor a_i \rfloor \big) \cup \big[ \lfloor a_i \rfloor , \lceil b_i \rceil \big) \cup \big[ \lceil b_i \rceil, \infty \big)$ of $\R$; these partitions generate a partition of $\R^d$ into $3^d$ pieces, one of which is bounded and contains the cube $C$.

By Lemma \ref{lem:countingprops}, one of the pieces $P$ of this partition satisfies $\cmeas\alpha(A \cap P) = \infty$. Since the $\alpha$-counting measure of any bounded set is finite, $P$ must be one of the $3^d-1$ unbounded pieces, all of which are disjoint from $C$. Note that $P$ is aligned with $\Z^d$ in the sense that $\lfloor A \rfloor \cap P = \lfloor A \rfloor \cap \lfloor P \rfloor = \lfloor A \cap P \rfloor$.  Finally, if $C_0' \subseteq \R^d$ is a cube, then it is not hard to see that there exists a translate $C_0$ of $C_0'$ disjoint from $C$ which contains $C_0' \cap P$; that is, $\|C_0\| = \|C_0'\|$, $C_0 \cap C = \emptyset$, and $C_0 \supseteq C_0' \cap P$.

Now, by the definition of $\cmeas\alpha(A)$, we may choose a cube $C_1$, $\|C_1\| \geq 2$, such that
\[\frac{\big| \lfloor A \rfloor \cap C_1 \big|}{\|C_1\|^\alpha} \geq 2.\]
Let $P_1 \subseteq \R^d$ be an unbounded piece of the partition corresponding to $C_1$ described above with the property that $\cmeas\alpha(A \cap P_1) = \infty$. By the definition of $\cmeas\alpha(A \cap P_1)$, we may choose a cube $C_2'$, $\|C_2'\| \geq 2^2$, such that $\big| \lfloor A \cap P_1 \rfloor \cap C_2' \big| \big/ \|C_2'\|^\alpha$ is at least $2^2$. By the remark above, there exists a cube $C_2$, $\|C_2\| = \|C_2'\|$, disjoint from $C_1$, for which $C_2 \supseteq C_2' \cap P_1$. It follows that
\[\frac{ \big| \lfloor A \rfloor \cap C_2 \big|}{ \| C_2\|^\alpha} \geq \frac{ \big| \lfloor A \rfloor \cap P_1 \cap C_2' \big|}{ \| C_2'\|^\alpha} = \frac{ \big| \lfloor A \cap P_1 \rfloor \cap C_2'\big|}{ \| C_2'\|^\alpha} \geq 2^2.\]

Suppose now that $k \geq 2$ and the cubes $C_j$, $1 \leq j \leq k$, have been defined. Let $C$ be a cube containing $C_1 \cup \cdots \cup C_k$. Let $P_k \subseteq \R^d$ be an unbounded piece of the partition corresponding to the cube $C$ with the property that $\cmeas\alpha(A \cap P_k) = \infty$. By the argument given above, there exists a cube $C_{k+1}$,  $\|C_{k+1}\| \geq 2^{k+1}$, disjoint from $C$ (and thus from each $C_j$, $1 \leq j \leq k$) for which
\[\frac{ \big| \lfloor A \rfloor \cap C_{k+1} \big|}{\| C_{k+1}\|^\alpha} \geq \frac{ \big| \lfloor A \cap P_k \rfloor \cap C_{k+1} \big|}{\| C_{k+1}\|^\alpha} \geq 2^{k+1}.\]

This defines inductively a sequence of pairwise disjoint cubes $(C_n)_n$, $\|C_n\| \rightarrow \infty$, satisfying (\ref{eqn:limisinfinity}).
\end{proof}

\begin{remark}
It can be shown in the same way that for any unbounded $A \subseteq \R^d$, there exists a sequence of \emph{disjoint} cubes along which $A$ achieves its counting dimension.
\end{remark}

\begin{lemma}\label{lem:techlemredo}
Let $E \subseteq \Z^d$ be a finite set contained in a cube $C_E \subseteq \R^d$, $\|C_E\| \geq 1$. Suppose that $\alpha > 0$ and that
\[S:= \frac{|E \cap C_E|}{\|C_E\|^\alpha} \geq 6^d.\]
There exists an $F \subseteq E$ and a cube $C_{F} \supseteq F$ with side length $\|C_{F}\| \geq \sqrt[d]{S}/6$ satisfying
\[ 2^d \leq \sup_{\|C\| \geq 1} \frac{|F \cap C|}{\|C\|^\alpha} = \frac{|F \cap C_{F}|}{\|C_{F}\|^\alpha} < 2^d+1.\]
\end{lemma}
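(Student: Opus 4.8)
The plan is to work throughout with the local density functional $M(F) := \sup_{\|C\| \geq 1} |F \cap C| / \|C\|^\alpha$ for finite $F \subseteq \Z^d$, and to produce $F$ by deleting lattice points from $E$ one at a time. First I would record two preliminary facts. Since $|F \cap C|$ is bounded by $|F|$ while $\|C\|^{-\alpha} \to 0$, only cubes of bounded side length are relevant, and among those only finitely many combinatorial types occur; hence the supremum $M(F)$ is \emph{attained} by some cube, which I will call a maximizer. Moreover a maximizer may be taken \emph{tight}, i.e. equal to the bounding cube of the lattice points it contains: shrinking a cube until its enclosed points meet all faces leaves $|F \cap C|$ unchanged but strictly decreases $\|C\|$, hence can only raise the density. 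A tight cube of side $\ell \geq 1$ enclosing $m$ lattice points satisfies $m \leq (\ell+1)^d \leq (2\ell)^d$.

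Next I would locate a large dense cube inside $E$. Because $\|C_E\| \geq 1$ we have $M(E) \geq |E \cap C_E|/\|C_E\|^\alpha = S \geq 6^d$. Let $C_0$ be a tight maximizer for $E$, and set $\ell_0 = \|C_0\|$, $E_0 = E \cap C_0$, $m_0 = |E_0|$, so that $m_0 = M(E)\,\ell_0^\alpha \geq S$ (using $\ell_0 \geq 1$). The tightness bound $m_0 \leq (2\ell_0)^d$ then gives $\ell_0 \geq m_0^{1/d}/2 \geq \sqrt[d]{S}/2$, which is the source of the factor $\sqrt[d]{S}$ in the conclusion. Since $C_0$ maximizes the density over all of $E$ and $E_0 \cap C = E \cap C$ only at $C = C_0$ is not needed — one checks directly that $|E_0 \cap C|/\|C\|^\alpha \leq |E \cap C|/\|C\|^\alpha \leq M(E) = |E_0 \cap C_0|/\|C_0\|^\alpha$ — the cube $C_0$ is also a maximizer for $E_0$, with $M(E_0) = M(E) \geq 6^d$. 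From here I would thin only inside $C_0$, so every candidate set $F$ satisfies $F \subseteq E_0 \subseteq C_0$.

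Now I would delete points one at a time. The key observation is that a single deletion decreases $M$ by at most $1$: if $C^\ast$ is a maximizer before deleting a point $p$, then $C^\ast$ still witnesses $M(F \setminus \{p\}) \geq (|F \cap C^\ast| - 1)/\|C^\ast\|^\alpha = M(F) - \|C^\ast\|^{-\alpha} \geq M(F) - 1$, since $\|C^\ast\| \geq 1$ and $\alpha > 0$. As $M$ starts at $M(E_0) \geq 6^d > 2^d + 1$, ends at $0$, and drops by at most $1$ per step, there is a first moment at which $M(F) < 2^d + 1$, and at that moment $M(F) \in [2^d, 2^d+1)$. Letting $C_F$ be the resulting tight maximizer and replacing $F$ by $F \cap C_F$ keeps the value of $M$ in the window $[2^d, 2^d+1)$ (the density over $C_F$ is unchanged, and no other cube can gain points) while guaranteeing $C_F \supseteq F$. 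This secures the density statement $2^d \leq \sup_{\|C\|\geq 1}|F\cap C|/\|C\|^\alpha = |F\cap C_F|/\|C_F\|^\alpha < 2^d+1$, for \emph{any} deletion order.

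The remaining and genuinely delicate point is the lower bound $\|C_F\| \geq \sqrt[d]{S}/6$; by $\ell_0 \geq \sqrt[d]{S}/2$ it suffices to force the final maximizer to have side at least $\ell_0/3$. This is where the deletion order matters, and it is the step I expect to be the main obstacle. It fails for a naive order: if one always deletes from whichever cube is currently densest, the two-cluster phenomenon lets a small residual cluster, of side length far below $\ell_0$, survive as the maximizer. The remedy is to delete from a maximizer of \emph{smallest} side length at each step, depleting small over-dense cubes first, so that when the density first drops into the window the witnessing cube is comparable to $C_0$ rather than to a small cluster. Proving that no small cube can persist as the maximizer while a $C_0$-scale dense cube remains is the combinatorial core: I would argue it through a covering/packing estimate, showing that each surviving small over-dense cube must retain a definite proportion of the kept points, and that too many such cubes cannot coexist with a large near-maximal cube, with the constants tuned to yield the factor $6$. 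The density bookkeeping is routine; this control of the maximizer's side length is where the real work lies.
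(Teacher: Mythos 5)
The density bookkeeping in your proposal is fine (the supremum is attained, one deletion drops it by at most $1$, so you can land in the window $[2^d, 2^d+1)$), but the one claim that actually carries the lemma --- the lower bound $\|C_F\| \geq \sqrt[d]{S}/6$ on the \emph{final} maximizer --- is exactly the part you leave unproven. You correctly observe that a naive deletion order fails because a small dense cluster can survive as the maximizer, you propose deleting from a smallest maximizer at each step, and then you defer the "combinatorial core" (that no small over-dense cube can persist alongside a $C_0$-scale one) to an unspecified covering/packing argument with constants "tuned to yield the factor $6$." That is the whole difficulty of the lemma, not a routine verification: nothing in your setup prevents the greedy process from ending with a maximizer of side length $O(1)$ containing roughly $2^d$ points, and establishing that your particular deletion rule avoids this would require a genuine argument that you have not supplied. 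As written, the proof has a gap precisely at the one inequality that distinguishes this lemma from a triviality.

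The paper avoids the issue entirely by \emph{sparsifying before deleting}, and this is the idea your proposal is missing. Set $\ell = \sqrt[d]{S}/6$, partition $C_E$ into sub-cubes of side length between $\ell$ and $2\ell$, and keep exactly one point of $E$ from each non-empty sub-cube to form $E'$. Since each sub-cube contains at most $(3\ell)^d = S/2^d$ lattice points, this retains at least a $2^d/S$ fraction of $E \cap C_E$, so $E'$ still has density at least $2^d$ over $C_E$ and the deletion process can be run on $E'$ with any order whatsoever. The payoff is that the side-length bound becomes automatic at the end: any cube of side length less than $\ell$ meets at most $2^d$ of the sub-cubes and hence contains at most $2^d$ points of $E'' \subseteq E'$, so it cannot realize a supremum that is at least $2^d$ with $\|C\|^\alpha \geq 1$ in the denominator (the paper derives a contradiction from this). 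In short: the paper spends its one structural idea up front to make every small cube sparse, whereas you try to control smallness dynamically during the deletion --- and that dynamic control is the step you have not actually carried out.
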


\begin{proof}
Set $\ell = \sqrt[d]{S}/6$. Note that $1 \leq \ell \leq \|C_E\| \big/ 2$ since $S \geq 6^d$ and $\|C_E\| \geq 1$. Partition $C_E$ evenly into $\lfloor \|C_E\| / \ell \rfloor^d$ sub-cubes $\{C_{E,i}\}_{i \in I}$, each of side length $\|C_E\| \big / \lfloor \|C_E\| / \ell \rfloor$. The side length of the sub-cubes satisfies $\ell \leq \|C_E\| \big / \lfloor \|C_E\| / \ell \rfloor \leq 2\ell$. Choose a subset $E' \subseteq E$ in the following way: for each $i \in I$, if $E \cap C_{E,i}$ is non-empty, choose exactly one point of $E$ in $C_{E,i}$.

For each $i \in I$, since $\big|\Z^d \cap C_{E,i} \big| \leq (3\ell)^d$,
\[|E' \cap C_{E,i}| \geq \frac{|E \cap C_{E,i}|}{(3\ell)^d}.\]
Summing the previous inequality over the smaller cubes,
\[\sup_{\|C\| \geq 1} \frac{|E' \cap C|}{\|C\|^\alpha} \geq \frac{|E' \cap C_E|}{\|C_E\|^\alpha} \geq \frac{S}{(3\ell)^d} = 2^d.\]

Note that for $e' \in E'$,
\[0 \leq \sup_{\|C\| \geq 1} \frac{|E' \cap C|}{\|C\|^\alpha} - \sup_{\|C\| \geq 1} \frac{\big|(E' \setminus \{e'\}) \cap C \big|}{\|C\|^\alpha}\leq 1,\]
which is to say that removing one element from $E'$ decreases the respective supremum by at most 1. Therefore, since $E'$ is finite, we may remove elements successively from $E'$ to arrive at an $E'' \subseteq E'$ satisfying
\[2^d \leq \sup_{\|C\| \geq 1} \frac{|E'' \cap C|}{\|C\|^\alpha} < 2^d +1.\]

Finally, let $C_F$ be a cube with $\|C_F\| \geq 1$ realizing this supremum and set $F = E'' \cap C_F$. We have only to show that $\|C_F\| \geq \ell$; if this were not the case, then $C_F$ could intersect at most $2^d$ of the sub-cubes $C_{E,i}$, whereby $|F \cap C_F| \leq 2^d$ and $|F \cap C_F| \big/ \|C_F\|^{\alpha} < 2^d$, a contradiction.
\end{proof}

We can now prove Proposition \ref{prop:regularsubset}.

\begin{proof}
It suffices to prove the assertion for $A \subseteq \Z^d$. Indeed, suppose $A \subseteq \R^d$ and $\cmeas\alpha(A) = \infty$. By definition, $\cmeas\alpha \big(\lfloor A \rfloor \big) =\cmeas\alpha(A)=  \infty$. It would follow that there exists a $\alpha$-counting regular subset $A' \subseteq \lfloor A \rfloor$. The set
\[A \cap \bigcup_{a' \in A'} C_{a'},\]
where $C_{a'}$ is the unit cube based at $a'$, has $\alpha$-counting measure equal to that of $A'$, and so it is an $\alpha$-counting regular subset of $A$.

So, suppose that $A \subseteq \Z^d$ and $\cmeas\alpha(A) = \infty$. If $\alpha = 0$, any non-empty, finite subset $A'$ of $A$ will suffice, so suppose $\alpha > 0$. By Lemma \ref{lem:disjointcubes}, there exists a sequence of pairwise disjoint cubes $(C_n)_n$, $\|C_n\| \geq 1$, $\|C_n\| \rightarrow \infty$, for which
\begin{align}\label{eqn:toinfinity}
\lim_{n \rightarrow \infty} \frac{|A \cap C_n|}{\|C_n\|^\alpha} = \infty.
\end{align}
Let $A_n = A \cap C_n$. In what follows, whenever we pass to a subsequence $(C_{n_k})_k$ of $(C_n)_n$, we replace $A$ with the subset $\bigcup_k A_{n_k}$. Since the cubes $(C_n)_n$ are pairwise disjoint and $\|C_n\| \rightarrow \infty$, we may assume by passing to a subsequence that for all $n \in \N$,
\begin{gather}
\label{eqn:large} \frac{|A \cap C_n|}{\|C_n\|^\alpha} \geq (6n)^d,\\
\label{eqn:distant} C_{n+1} \cap \bigcup_{i=1}^n [C_i]_{2^{n+1}\|C_i\|} = \emptyset,
\end{gather}
where $[C_i]_{2^{n+1}\|C_i\|}$ denotes the cube with the same center as $C_i$ and with side length $\|C_i\| + 2^{n+1}\|C_i\|$.

Property $(\ref{eqn:distant})$ is possible by the fact that any sequence of pairwise disjoint cubes with side lengths bounded from below will eventually be disjoint from some fixed cube, and it means that if $C \subseteq \R^d$ is a cube which intersects both $C_n$ and $C_{n'}$, $1 \leq n < n'$, then \begin{align}\label{eqn:small}\|C\| \geq 2^n\| C_{n}\|.\end{align}


Each finite set $A_n \subseteq C_n$ satisfies the conditions of Lemma \ref{lem:techlemredo} with $\sqrt[d]S / 6 \geq n$. Let $A_n' \subseteq A_n$ and $C_n' \supseteq A_n'$ be the subset and cube guaranteed by the lemma, and let $A' = \bigcup_n A_n' \subseteq A$. In order to show that $A'$ is $\alpha$-counting regular, we will show
\begin{align}\label{eqn:finalgoal}2^d \leq \cmeas\alpha(A') \leq \frac{2^\alpha}{2^\alpha - 1} (2^d+1).\end{align}

By (\ref{eqn:large}) and the Lemma \ref{lem:techlemredo}, $\|C_n'\| \rightarrow \infty$ and \[2^d \leq \frac{|A_n' \cap C_n'|}{\|C_n'\|^\alpha} \leq \frac{|A' \cap C_n'|}{\|C_n'\|^\alpha}.\] This sequence of cubes shows that $2^d \leq \cmeas\alpha(A')$, which is the left hand side of (\ref{eqn:finalgoal}).

To show the right hand side of (\ref{eqn:finalgoal}), it suffices to show for an arbitrary cube $C$, $\| C\| \geq 1$, that
\[\frac{|A' \cap C|}{\| C\|^\alpha} < \frac{2^\alpha}{2^\alpha - 1}(2^d+1).\]
Consider three cases. Case 1: the cube $C$ intersects none of the cubes $\{C_n\}_n$. In this case, $|A' \cap C| = 0$. Case 2: the cube $C$ intersects exactly one of the cubes $\{C_n\}_n$, say $C \cap C_n \neq \emptyset$. In this case, by the choice of $C_n'$,
\[\frac{|A' \cap C|}{\|C\|^\alpha} \leq \frac{|A' \cap C_n'|}{\|C_n'\|^\alpha} < 2^d+1.\]
Case 3: the cube $C$ intersects exactly the cubes $\{C_{i_1}, \cdots, C_{i_m}\}$, $m \geq 2$, $1 \leq i_1 < \cdots < i_m$. Using (\ref{eqn:small}),
\begin{align*}
\frac{|A' \cap C|}{\|C\|^\alpha} \leq \sum_{j=1}^m \frac{ \big|A_{i_j}' \cap C \big|}{\|C\|^\alpha} &\leq \sum_{j=1}^{m-1} \frac{\big|A_{i_j}' \cap C \big|}{(2^{i_j}\|C_{i_j}\|)^\alpha} + \frac{\big|A_{i_m}' \cap C \big|}{\|C\|^\alpha}\\
&< \sum_{j=1}^{m-1} 2^{-\alpha i_j} (2^d+1) + (2^d+1)\\
&< \left(\frac{1}{1-2^{-\alpha}}-1 \right)(2^d+1) + (2^d + 1) = \frac{2^\alpha}{2^\alpha-1}(2^d+1),
\end{align*}
where the third inequality follows from the upper bound in Lemma \ref{lem:techlemredo}.
\end{proof}

The analogue to Proposition \ref{prop:regularsubset} for the mass dimension is stronger, and the proof is simpler. Since we don't need this fact specifically, we leave the proof to the interested reader.

\begin{prop}
Let $A \subseteq \R^d$. For all $0 \leq \alpha \leq \mdim(A)$ and all $0 \leq J \leq \mmeas\alpha(A)$, there exists a subset $A' \subseteq A$ with $\mdim(A') = \alpha$ and $\mmeas\alpha(A') = J$.
\end{prop}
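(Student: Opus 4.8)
The plan is to exploit the fact that, unlike the cubes appearing in the counting dimension, the centered cubes $[-\ell,\ell)^d$ are nested, so $\mmeas\alpha$ is governed by the single monotone counting function $N(\ell) = \big|\lfloor A\rfloor \cap [-\ell,\ell)^d\big|$. As in the proof of Proposition \ref{prop:regularsubset}, it suffices to treat $A \subseteq \Z^d$: a subset $B$ of $\lfloor A\rfloor$ with the desired mass quantities lifts to a subset $A'\subseteq A$ with $\lfloor A'\rfloor = B$ by selecting one point of $A$ from each unit cube based at a point of $B$, and $\mmeasempty$ and $\mdim$ depend only on the floor. I would then order the points of $A$ by their radius $r(z) = \inf\{\ell \ | \ z \in [-\ell,\ell)^d\}$, breaking ties arbitrarily, so that $N(\ell)$ increases by one as $\ell$ passes each point.

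The construction builds $A'$ greedily along this radial order against a budget $f(\ell) = c(\ell)(2\ell)^\alpha$: process points in order of increasing radius, and keep a point $z$ precisely when doing so leaves the running count at most $f(r(z))$. By construction $N_{A'}(\ell)\le f(\ell)$ for all $\ell$, which controls $\mmeas\alpha(A')$ from above. The heart of the matter is the matching lower bound, namely showing that the greedy procedure actually saturates its budget along a sequence of radii. For this I would use the following dichotomy: at any radius $\ell$, either every point of $A$ within radius $\ell$ was kept (so $N_{A'}(\ell) = N(\ell)$), or a rejection occurred at some radius $r\le \ell$, in which case the running count equals $\lfloor f(r)\rfloor$ at that moment.

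For the main case $0 < J < \mmeas\alpha(A)$, take $c \equiv J$ and fix a sequence $\ell_k\to\infty$ with $N(\ell_k)/(2\ell_k)^\alpha \to \mmeas\alpha(A) > J$. Since $N(\ell_k) > f(\ell_k) \ge N_{A'}(\ell_k)$ eventually, the greedy cannot have kept everything, so rejections must occur; moreover they must occur at arbitrarily large radii, for if all rejections ceased by some $R_0$ then beyond $R_0$ the greedy would keep every point and $N_{A'}$ would grow like $N$, eventually breaching the budget $f$ along $(\ell_k)_k$. At each rejection radius $r$ the running count is $\lfloor f(r)\rfloor = \lfloor J(2r)^\alpha\rfloor$, so along the unbounded sequence of rejection radii the ratio $N_{A'}(\ell)/(2\ell)^\alpha$ tends to $J$. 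Together with the budget bound this gives $\mmeas\alpha(A') = J$, and since $0 < J < \infty$ the mass analogue of parts \emph{iv.}--\emph{v.} of Lemma \ref{lem:basiccountingprops} forces $\mdim(A') = \alpha$.

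The remaining cases are routine variations. When $J = \mmeas\alpha(A) \in (0,\infty)$ one simply takes $A' = A$. When $J = 0$, necessarily with $\mmeas\alpha(A) > 0$ (the case $\mmeas\alpha(A)=0$ again giving $A'=A$), one runs the same greedy with a slowly vanishing $c(\ell) = 1/\log\ell$; the budget forces $\mmeas\alpha(A') = 0$, while saturating it along rejection radii keeps $\mmeas\beta(A') = \infty$ for every $\beta<\alpha$, pinning $\mdim(A') = \alpha$. The infinite values ($\mmeas\alpha(A) = \infty$, or $J=\infty$ with $\alpha < \mdim(A)$) are handled identically with $c\equiv J$ or with a slowly diverging $c(\ell)\to\infty$ that still grows slower than every positive power of $\ell$. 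The main obstacle is precisely the lower bound in the central case, i.e. verifying that the budget is saturated infinitely often; the rejection-radius dichotomy resolves it, and it is here that the strictness $J < \mmeas\alpha(A)$ is essential, the boundary $J = \mmeas\alpha(A)$ being exactly the case dispatched by $A' = A$.
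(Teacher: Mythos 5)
The paper never actually proves this proposition --- it is stated with the remark that the proof is ``left to the interested reader'' --- so there is no argument of the author's to compare yours against; judged on its own, your greedy construction is correct and essentially complete. The budget invariant $N_{A'}(\ell)\le f(\ell)$ requires $f$ to be nondecreasing, which is automatic for $c\equiv J$ and easily arranged for the slowly varying choices of $c$; and your rejection-radius dichotomy is exactly the right device for the lower bound, since at a rejection at radius $r$ the running count is squeezed into the interval $\big(f(r)-1,\,f(r)\big]$ and hence equals $\lfloor f(r)\rfloor$, while rejections must recur at unboundedly large radii precisely because otherwise $N_{A'}$ would track $N$ and breach the budget along a sequence witnessing $\mmeas\alpha(A)>J$. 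Two small points deserve attention. First, in the case $J=\mmeas\alpha(A)=\infty$ with $\alpha<\mdim(A)$ you cannot take $A'=A$, and with the slowly diverging $c$ the dichotomy may land on the branch where rejections eventually cease; there one must add the observation that $N(\ell)\le f(\ell)+O(1)$ then forces $\mmeas\beta(A)=0$ for all $\beta>\alpha$, hence $\mdim(A)=\alpha$ and $A'=A$ works after all. Second, when $\alpha=0$ the quantity $\mmeas{0}(A')$ equals $|A'|$, an integer or $\infty$, so the proposition as stated fails for non-integer finite $J$ when $A$ is infinite; this is a defect of the statement rather than of your argument, but your $\pm 1$ slack at rejection radii genuinely does not wash out upon division by $(2\ell)^\alpha$ when $\alpha=0$, so that case should be excluded or handled separately by choosing a subset of the correct cardinality.
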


In applications of the Marstrand-type theorems to come, it will be necessary to consider sets which are simultaneously counting and mass regular. It is not the case, however, that all sets contain subsets which are simultaneously counting and mass regular; that is, the straightforward combination of the previous two propositions fails, as the next example indicates.

\begin{example}\label{ex:nosimultsubset}
Let $0 < \alpha < 1$. There exists a set $A \subseteq \Z$ for which
\begin{enumerate}[i.]
\item \label{eqn:propone} $\cdim(A) = \mdim(A) = \alpha$,
\item \label{eqn:proptwo} $\cmeas{\alpha}(A) = \mmeas{\alpha}(A) = \infty$, and
\item \label{eqn:propthree} if $A' \subseteq A$ is such that $\cmeas{\alpha}(A') < \infty$, then $\mmeas{\alpha}(A') = 0$.
\end{enumerate}

Such a set may be constructed as a union of finite sets which are sufficiently spaced. Let $(k_n)_n \subseteq \N$ be rapidly increasing, and set $\eps_n = \left( \log k_n \right)^{-1}$. For each $n \in \N$, let $A_n$ be $N_n = \lfloor k_n 2^{\alpha k_n} \rfloor$ points evenly spaced over the entire interval $[2^{k_n},2^{k_n} + \ell_n]$, where $\ell_n = 2^{\alpha k_n / (\alpha+\eps_n)}$. It is straightforward to check that for all intervals $C \subseteq \R$ with $\|C\| \geq 1$, $| \lfloor A_n \rfloor \cap C| \leq k_n \|C\|^{\alpha + \eps_n}$. Set $A = \cup_n \lfloor A_n \rfloor$.

To show \emph{\ref{eqn:propone}.} and \emph{\ref{eqn:proptwo}.}, it suffices to show that $\cdim(A) \leq \alpha$ and $\mmeas\alpha(A) = \infty$. Note that $N_n \to \infty$ while the density of points in each interval $N_n / \ell_n$ tends to 0 monotonically as $n \to \infty$. For an interval $C \subseteq \R$, let $n$ be such that $\ell_n \leq \|C\| < \ell_{n+1}$. Then either $|A \cap C| \leq N_n$ or $|A \cap C| \leq \|C\| N_{n+1} / \ell_{n+1}$. In either case, one can show that
\[\frac{\log |A \cap C|}{\log \|C\|} \leq \alpha + \eps_n \to \alpha \quad \text{as} \quad \|C\| \to \infty.\]
This implies that $\cdim(A) \leq \alpha$. To show that $\mmeas\alpha(A) = \infty$, it is enough to observe that there are at least $N_n$ elements of $A$ in the interval $\big[-2^{k_n+1},2^{k_n+1}\big)$.

Finally, to show \emph{\ref{eqn:propthree}.}, suppose $A' \subseteq A$ is such that $\cmeas{\alpha}(A') = J < \infty$. Then
\[\big| A' \cap [2^{k_n},2^{k_n} + \ell_n] \big| \leq J \ell_n^\alpha,\]
and since $(k_n)_n$ is increasing rapidly,
\[\frac{\big| A' \cap [-2^{k_n+1},2^{k_n+1}) \big|}{2^{(k_n+2)\alpha}} \leq \frac{J \ell_n^\alpha + 1}{2^{(k_n+2)\alpha}} \to 0 \quad \text{as} \quad n \to \infty.\]
This implies that $\mmeas\alpha(A') = 0$.
\end{example}

\subsection{The counting dimension via coverings}

The goal of this section is to provide a characterization of the counting dimension via coverings.

\begin{definition}
Let $A \subseteq \R^d$ and $\alpha \geq 0$. The \emph{$\alpha$-covering measure} of $A$ is
\[\covmeas{\alpha,d}(A) = \lim_{r \rightarrow 0} \limsup_{\|C\| \rightarrow \infty} \inf \left\{ \ \sum_i \left( \frac{\|C_i\|}{\|C\|} \right)^\alpha \ \middle| \ A \cap C \subseteq \bigcup_i C_i, \ 1 \leq \|C_i\| \leq r \|C\| \right\},\]
where the limit supremum is taken over cubes $C \subseteq \R^d$, the infimum is taken over all sets of cubes $\{C_i\}$ satisfying the given conditions, and the infimum of the empty set is taken to be 0. (To save space, the constraints on the covers $\{C_i\}$ over which the infimum is taken will often be omitted.) The \emph{covering dimension} of $A$ is
\[\covdim^d(A) = \inf \{\alpha \geq 0 \ | \ \covmeas\alpha(A) = 0 \}.\]
As before, when the ambient space is apparent, we write $\covmeas\alpha$ and $\covdim$ instead of $\covmeas{\alpha,d}$ and $\covdim^d$.
\end{definition}

The letter H was chosen to denote the covering measures due to the similarity in formulation with the classical Hausdorff measures; note that in keeping with the course geometry perspective, there is a lower bound on the side length of the cubes $C_i$ in admissible covers $\{C_i\}$.

\begin{lemma}\label{lem:basicprops}
Let $A \subseteq \R^d$ and $\alpha \geq 0$. Then
\begin{enumerate}[i.]
\item for all $\alpha' \leq \alpha$, $\covmeas{\alpha'}(A) \geq \covmeas\alpha (A)$;
\item for all $A' \subseteq A$, $\covmeas\alpha(A') \leq \covmeas\alpha(A)$ and $\covdim(A') \leq \covdim(A)$;
\item $0 \leq \covdim(A) \leq d$;
\item if $\alpha < \covdim(A)$, then $\covmeas\alpha(A) = \infty$;
\item if $\alpha > \covdim(A)$, then $\covmeas\alpha(A) = 0$.
\end{enumerate}
\end{lemma}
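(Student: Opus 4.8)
The plan is to follow the template of Lemma~\ref{lem:basiccountingprops}, treating properties i, ii, iii, and v as essentially formal and concentrating the real work on iv. Throughout I would write $H_r^\alpha(A) = \limsup_{\|C\|\to\infty} \inf \{ \sum_i (\|C_i\|/\|C\|)^\alpha \}$ for the inner double-limit over admissible covers, so that $\covmeas\alpha(A) = \lim_{r\to 0} H_r^\alpha(A)$. I would first record that, since decreasing $r$ only shrinks the family of admissible covers and hence raises the infimum, $H_r^\alpha(A)$ is non-increasing in $r$; consequently the limit defining $\covmeas\alpha(A)$ exists in $[0,\infty]$ as a supremum over $r$, and as $r \to 0$ the quantity $H_r^\alpha(A)$ increases to $\covmeas\alpha(A)$.

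For i and ii I would argue at the level of a single cube $C$ and a single admissible cover. Since we care only about the limit $r \to 0$, we may assume $r < 1$, so every ratio $\|C_i\|/\|C\|$ lies in $(0,1)$ and $t \mapsto t^{\alpha}$ is non-increasing there; hence for $\alpha' \le \alpha$ the $\alpha'$-sum dominates the $\alpha$-sum over each fixed cover. As this holds for every admissible cover, it passes to the infimum and then through $\limsup_{\|C\|\to\infty}$ and $\lim_{r\to 0}$ to give i. For ii, note $A' \cap C \subseteq A \cap C$, so every cover admissible for $A \cap C$ also covers $A' \cap C$; the infimum for $A'$ is thus taken over a larger family and is no larger, which survives both limits to yield $\covmeas\alpha(A') \le \covmeas\alpha(A)$, and $\covdim(A') \le \covdim(A)$ then follows from the definition of $\covdim$.

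For iii the lower bound is immediate from $\covdim$ being an infimum over $\alpha \ge 0$. For the upper bound I would exhibit, for $\alpha > d$, the explicit cover of $A \cap C$ by the unit cubes based at the lattice points $\lfloor x \rfloor$, $x \in A \cap C$; these number at most $(\|C\|+2)^d$, are admissible once $\|C\| \ge 1/r$, and contribute a total of at most $(\|C\|+2)^d \|C\|^{-\alpha}$, which tends to $0$. Hence $\covmeas\alpha(A) = 0$ for $\alpha > d$ and $\covdim(A) \le d$. Property v is then the usual formal consequence of the definition of $\covdim$ together with the monotonicity in i: if $\alpha > \covdim(A)$ there is a $\beta < \alpha$ with $\covmeas\beta(A) = 0$, whence $\covmeas\alpha(A) \le \covmeas\beta(A) = 0$.

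The main obstacle is iv, where the $\lim_{r\to 0}$ must be exploited rather than ignored. I would reduce it, via i and the definition of $\covdim$, to the \emph{claim} that $\covmeas\beta(A) > 0$ forces $\covmeas{\beta'}(A) = \infty$ for every $\beta' < \beta$: given $\alpha < \covdim(A)$, pick $\beta \in (\alpha, \covdim(A))$; then $\covmeas\beta(A) > 0$ since $\beta$ lies strictly below the infimum defining $\covdim$, and the claim with $\beta' = \alpha$ finishes. To prove the claim, factor each term of an admissible cover as $(\|C_i\|/\|C\|)^{\beta'} = (\|C_i\|/\|C\|)^{\beta}(\|C_i\|/\|C\|)^{\beta'-\beta} \ge r^{-(\beta-\beta')}(\|C_i\|/\|C\|)^{\beta}$, using $\|C_i\|/\|C\| \le r$ and $\beta' - \beta < 0$. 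Summing and passing to the (common) infimum and then $\limsup_{\|C\|\to\infty}$ gives $H_r^{\beta'}(A) \ge r^{-(\beta-\beta')} H_r^\beta(A)$. Since $H_r^\beta(A) \uparrow \covmeas\beta(A) > 0$ is eventually bounded below by a positive constant while $r^{-(\beta-\beta')} \to \infty$, letting $r \to 0$ yields $\covmeas{\beta'}(A) = \infty$. The delicate point is that the scaling factor $r^{-(\beta-\beta')}$ blows up precisely as $r \to 0$, so the argument genuinely uses the monotone convergence $H_r^\beta(A) \uparrow \covmeas\beta(A)$; conflating the $r$-level quantity $H_r$ with its limit would break the estimate.
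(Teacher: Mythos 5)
Your proof is correct and takes essentially the same route as the paper's: the heart of the matter, property iv, is handled by the identical factorization $(\|C_i\|/\|C\|)^{\beta'} \ge r^{-(\beta-\beta')}(\|C_i\|/\|C\|)^{\beta}$, passed through the infimum and limit supremum before letting $r \to 0$. The only (harmless) cosmetic difference is in iii, where you cover $A \cap C$ by unit cubes so that the inner limit supremum already vanishes for each fixed $r$, whereas the paper covers by cubes of side length $r\|C\|$ and sends $r \to 0$.
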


\begin{proof}
Properties \emph{i.} and \emph{ii.} are immediate from the definition of the covering measures. Property \emph{iii.} follows from the fact that if $\alpha > d$, then $\covmeas\alpha (A) = 0$. To prove this, for all $0 < r < 1$ and all cubes $C$, take a covering $\{C_i\}_i$ of $A \cap C$ with $\| C_i \| = r \|C\|$. Then
\[\inf_{\{C_i\}} \left\{ \ \sum_i \left( \frac{\|C_i\|}{\|C\|} \right)^\alpha \right\} \leq \left\lceil \frac{1}{r} \right\rceil^d r^\alpha \leq 2^d r^{\alpha - d}.\]
Since $C$ was arbitrary,
\[\limsup_{\|C\| \rightarrow \infty} \inf_{\{C_i\}} \left\{ \ \sum_i \left( \frac{\|C_i\|}{\|C\|} \right)^\alpha \right\} \leq 2^d r^{\alpha - d}.\]
Since $\alpha - d > 0$, $r^{\alpha - d} \rightarrow 0$ as $r \rightarrow 0$, and so $\covmeas\alpha(A) = 0$.

To prove \emph{iv.}, it suffices by the monotonicity of $\covmeas\alpha(A)$ in $\alpha$ (property \emph{i.}) to show that if $\alpha$ is such that $\covmeas{\alpha}(A) > 0$, then for all $\alpha' < \alpha$, $\covmeas{\alpha'}(A) = \infty$. So, suppose $\covmeas{\alpha}(A) > 0$. It follows that there exists an $\epsilon > 0$ and an $R > 0$ such that for all $0 < r < R$,
\[\limsup_{\|C\| \rightarrow \infty} \inf_{\{C_i\}} \left\{ \ \sum_i \left( \frac{\|C_i\|}{\|C\|} \right)^{\alpha} \right\} > \eps.\]
Fix $0 < r < R$, and let $(C_n)_n$, $\|C_n\| \rightarrow \infty$, be a sequence of cubes such that for all $n \in \N$,
\[\inf_{\{C_{n,i}\}} \left\{ \ \sum_i \left( \frac{\|C_{n,i}\|}{\|C_n\|} \right)^{\alpha} \right\} > \eps.\]
Let $n \in \N$ and $\{C_{n,i}\}_i$ be a cover of $A \cap C_n$ with $1 \leq \|C_{n,i}\| \leq r \|C_n\|$. Then
\begin{align*}
\sum_i \left( \frac{\|C_{n,i}\|}{\|C_n\|} \right)^{\alpha'} &= \sum_i \left( \frac{\|C_n\|}{\|C_{n,i}\|} \right)^{\alpha - \alpha'} \left( \frac{\|C_{n,i}\|}{\|C_n\|} \right)^{\alpha}\\
&\geq \left( \frac{1}{r} \right)^{\alpha - \alpha'} \sum_i \left( \frac{\|C_{n,i}\|}{\|C_n\|} \right)^{\alpha} > \left( \frac{1}{r} \right)^{\alpha - \alpha'} \eps.
\end{align*}
Since $n \in \N$ was arbitrary,
\[\limsup_{n \rightarrow \infty} \inf_{\{C_{n,i}\}} \left\{ \ \sum_i \left( \frac{\|C_{n,i}\|}{\|C_n\|} \right)^{\alpha'} \right\} > \left( \frac{1}{r} \right)^{\alpha - \alpha'} \eps,\]
and, consequently,
\[\limsup_{\|C\| \rightarrow \infty} \inf_{\{C_i\}} \left\{ \ \sum_i \left( \frac{\|C_i\|}{\|C\|} \right)^{\alpha'} \right\} > \left( \frac{1}{r} \right)^{\alpha - \alpha'} \eps.\]
Since $\alpha - \alpha' > 0$ and $0 < r < R$ was arbitrary, $\covmeas{\alpha'} (A) = \infty$.

Finally, property \emph{v.} follows from the definition of $\covdim$ and the monotonicity of $\covmeas\alpha(A)$ in $\alpha$.
\end{proof}

The analogue of Lemma \ref{lem:countingprops} holds for the covering measures and dimension. Instead of proving it separately, we turn to proving the equivalence of the covering dimension and the counting dimension.

\begin{thm}\label{thm:dimsareequiv}
The covering dimension and the counting dimension coincide; that is, for all $A \subseteq \R^d$, \[\covdim(A) = \cdim(A).\]
\end{thm}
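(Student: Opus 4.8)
The plan is to prove the two inequalities $\covdim(A) \le \cdim(A)$ and $\cdim(A) \le \covdim(A)$ separately. At the outset I would reduce to the case $A \subseteq \Z^d$: the counting measure sees only $\lfloor A \rfloor$ by definition, and for the covering measure the discrepancy between covering $A \cap C$ and covering $\lfloor A \rfloor \cap C$ is absorbed by enlarging each cube of an admissible cover by a bounded amount, which alters $\covmeas\alpha$ by at most a factor $3^d$ and so leaves $\covdim$ unchanged. This reduction is routine; its only purpose is to let me identify a covering cube $C_i$ (with $\|C_i\| \ge 1$) with the at most $(\|C_i\|+1)^d \le (2\|C_i\|)^d$ lattice points it contains.

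For $\covdim(A) \le \cdim(A)$ I would show that $\alpha > \cdim(A)$ forces $\covmeas\alpha(A) = 0$. Given such an $\alpha$, Lemma \ref{lem:basiccountingprops} gives $\cmeas\alpha(A) = 0$. For any $r > 0$ and any cube $C$ with $\|C\| \ge 1/r$, covering $A \cap C$ by the unit cubes based at its points is admissible and yields $\sum_i (\|C_i\|/\|C\|)^\alpha = |A \cap C| \big/ \|C\|^\alpha$; hence the inner infimum is at most $|A \cap C|/\|C\|^\alpha$ and its limit supremum over $\|C\| \to \infty$ is at most $\cmeas\alpha(A) = 0$. As this holds for every $r$, $\covmeas\alpha(A) = 0$, and letting $\alpha \downarrow \cdim(A)$ gives the inequality.

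The substantive direction is $\cdim(A) \le \covdim(A)$, and I expect the main obstacle to lie here. Fix $\alpha > \covdim(A)$, so $\covmeas\alpha(A) = 0$; assuming $\alpha < d$ (else the claim is trivial), fix any $\beta \in (\alpha, d]$ and aim to show $\cmeas\beta(A) < \infty$, which gives $\cdim(A) \le \beta$ and then $\cdim(A) \le \alpha$ after $\beta \downarrow \alpha$ and $\alpha \downarrow \covdim(A)$. The naive single-scale attempt — cover $A \cap C$ cheaply and write $|A \cap C| \le \sum_i |A \cap C_i| \le 2^d \sum_i \|C_i\|^d$ — fails, since it ties the count to the exponent $d$ rather than $\alpha$: a large covering cube is charged only $\|C_i\|^\alpha$ in the cover-sum but may be credited $\|C_i\|^d$ lattice points, and for $\alpha < d$ this gap destroys the bound. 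The resolution is a multiscale induction down to the unit scale. Fix $r_0 = \tfrac12$; since the inner limit supremum is monotone in $r$, the hypothesis $\covmeas\alpha(A) = 0$ yields a threshold $\ell_1$ such that \emph{every} cube $C$ with $\|C\| \ge \ell_1$ admits an admissible cover with $\sum_i (\|C_i\|/\|C\|)^\alpha < 1$ and $1 \le \|C_i\| \le \tfrac12 \|C\|$. Writing $F(\ell) = \sup_{\|C\| = \ell} |A \cap C|$, I would prove $F(\ell) \le K \ell^\beta$ for all $\ell \ge 1$, with $K = (\ell_1 + 1)^d$, by induction on $\lceil \log_2 \max(\ell/\ell_1, 1) \rceil$: for $\ell \le \ell_1$ the trivial bound $F(\ell) \le (\ell+1)^d \le K \le K\ell^\beta$ suffices, while for $\ell \ge \ell_1$ the recursion $F(\ell) \le \sum_i F(\|C_i\|)$, the inductive estimate $F(\|C_i\|) \le K\|C_i\|^\beta$, and the conversion $\|C_i\|^\beta \le (\tfrac12\ell)^{\beta-\alpha}\|C_i\|^\alpha$ give
\[ F(\ell) \le K\bigl(\tfrac12\ell\bigr)^{\beta-\alpha}\sum_i \|C_i\|^\alpha = K\bigl(\tfrac12\bigr)^{\beta-\alpha}\ell^\beta \sum_i\bigl(\|C_i\|/\ell\bigr)^\alpha < K\bigl(\tfrac12\bigr)^{\beta-\alpha}\ell^\beta \le K\ell^\beta. \]
The point making the induction close is that the covering cubes all have side $\le \tfrac12\ell < \ell$, so they sit at strictly smaller scales (handled by the inductive hypothesis or the base case), while the factor $(\tfrac12)^{\beta-\alpha} \le 1$ exactly absorbs the cover-sum bound; the cheap covers needed for the sub-cubes $C_i$ are available precisely because $\covmeas\alpha(A)=0$ supplies one for \emph{every} large cube, not just the original $C$. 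This yields $\cmeas\beta(A) \le K < \infty$ and completes the argument.
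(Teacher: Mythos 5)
Your proof is correct, but the substantive direction is genuinely different from the paper's. The paper proves $\cdim(A) \le \covdim(A)$ by first passing, via Proposition \ref{prop:regularsubset}, to an $\alpha$-counting regular subset $A' \subseteq A$ for each $\alpha < \cdim(A)$; the finiteness of $\cmeas\alpha(A')$ gives a uniform bound $\big|\lfloor A'\rfloor \cap C_i\big| \le K\|C_i\|^\alpha$ on how many points any covering cube can capture, so every admissible cover of $A' \cap C$ has cost at least $\big|\lfloor A'\rfloor \cap C\big| \big/ (K\|C\|^\alpha)$, forcing $\covmeas\alpha(A') \ge \cmeas\alpha(A')/K > 0$. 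This is a discrete mass-distribution principle, with the regular subset playing the role of a Frostman measure. You instead run the implication the other way ($\covmeas\alpha(A) = 0 \Rightarrow \cmeas\beta(A) < \infty$ for $\beta > \alpha$) by iterating cheap covers down to the unit scale; the key points --- that vanishing of $\covmeas\alpha$ at the fixed ratio $r = 1/2$ supplies a cover of cost less than $1$ for \emph{every} sufficiently large cube, that the scale halves at each step so the induction terminates at the base scale $\ell_1$, and that the factor $(1/2)^{\beta-\alpha}$ (indeed, even just the strict inequality in the cover cost) closes the induction --- are all in order, as is the routine reduction to $A \subseteq \Z^d$. What each approach buys: yours is self-contained and bypasses Proposition \ref{prop:regularsubset} entirely (whose proof, via Lemmas \ref{lem:disjointcubes} and \ref{lem:techlemredo}, is the bulk of the work behind the paper's argument); the paper's route yields as a by-product the sharper statement, recorded right after the theorem, that $0 < \covmeas\alpha(A) \le \cmeas\alpha(A) < \infty$ for $\alpha$-counting regular $A$, which your argument does not recover at the critical exponent. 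The first direction ($\covdim \le \cdim$ via unit-cube covers) is identical in both.
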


\begin{proof}
Let $A \subseteq \R^d$. Fix $r > 0$, and let $C \subseteq \R^d$ be a cube with $r \|C\| \geq 1$. By covering $A \cap C$ with unit cubes, we see
\[\inf \left\{ \sum_i \left(\frac{\|C_i\|}{\|C\|} \right)^\alpha \ \middle| \ A \cap C \subseteq \bigcup_i C_i, \ 1 \leq \|C_i\| \leq r \|C\| \right\} \leq \frac{\big| \lfloor A \rfloor \cap C \big|}{\|C\|^\alpha}.\]
Since $r$ and $C$ were arbitrary, it follows that
\[\covmeas\alpha(A) = \lim_{r \rightarrow 0} \limsup_{\|C\| \rightarrow \infty} \inf_{\{C_i\}} \left\{ \sum_i \left(\frac{\|C_i\|}{\|C\|} \right)^\alpha \right\} \leq \limsup_{\|C\| \rightarrow \infty} \frac{\big| \lfloor A \rfloor \cap C \big|}{\|C\|^\alpha} = \cmeas\alpha(A),\]
whereby $\covdim(A) \leq \cdim(A)$.

We claim that it suffices to have the reverse inequality $\cdim(A') \leq \covdim(A')$ for counting regular subsets $A' \subseteq \R^d$ to prove it for $A$. Indeed, if $\cdim(A) = 0$, then $\cdim(A) \leq \covdim(A)$ and we are done. Otherwise, by Proposition \ref{prop:regularsubset}, for all $0 \leq \alpha < \cdim(A)$, there exists an $\alpha$-counting regular subset $A' \subseteq A$. By the monotonicity of $\covdim$,
\[\alpha = \cdim(A') \leq \covdim(A') \leq \covdim(A).\]
Since $\alpha < \cdim(A)$ was arbitrary, $\cdim(A) \leq \covdim(A)$.

So, assume $A$ is counting regular; we want to show $\cdim(A) \leq \covdim(A)$. Let $\alpha = \cdim(A)$. Since $\cmeas\alpha(A) < \infty$, there exists a $0 < K < \infty$ with the property that for all cubes $C \subseteq \R^d$ with $\|C\| \geq 1$, $\big| \lfloor A \rfloor \cap C \big| < K\|C\|^\alpha$.  For any cube $C$, $\|C\| \geq 1$, and any cover $\{C_i\}_i$ of $A \cap C$ with $\|C_i\| \geq 1$,
\[\sum_i \left(\frac{\|C_i\|}{\|C\|} \right)^\alpha \geq \frac{1}{\|C\|^\alpha}\sum_i \frac{\big| \lfloor A \rfloor \cap C_i \big|}{K} \geq \frac{\big| \lfloor A \rfloor \cap C\big|}{K\|C\|^\alpha}.\]
Therefore, for any $r>0$,
\[\inf_{\{C_i\}} \left\{ \sum_i \left(\frac{\|C_i\|}{\|C\|} \right)^\alpha \right\} \geq \frac{\big| \lfloor A \rfloor \cap C \big|}{K\|C\|^\alpha}.\]
Since $C$ was arbitrary,
\[\limsup_{\|C\| \rightarrow \infty} \inf_{\{C_i\}} \left\{ \sum_i \left(\frac{\|C_i\|}{\|C\|} \right)^\alpha \right\} \geq \limsup_{\|C\| \rightarrow \infty} \frac{\big| \lfloor A \rfloor \cap C \big|}{K\|C\|^\alpha} = \frac{\cmeas\alpha (A)}{K}.\]
Since $r>0$ was arbitrary and $\cmeas\alpha(A) > 0$,
\[\covmeas\alpha(A) \geq \frac{\cmeas\alpha(A)}{K} > 0,\]
meaning that $\covdim(A) \geq \alpha = \cdim(A)$.
\end{proof}

It follows from the proof that if $A \subseteq \R^d$ is $\alpha$-counting regular, then
\[0< \covmeas\alpha (A) \leq \cmeas\alpha(A) < \infty.\]
Along with Proposition \ref{prop:regularsubset}, this provides a (partial) analogue of the same proposition for the covering measures: for all $\alpha < \covdim(A) = \cdim(A)$, there exists a subset $A' \subseteq A$ for which $0 < \covmeas\alpha(A') < \infty$. There are examples of sets $A \subseteq \R$ for which $\covmeas\alpha(A)$ is finite while $\cmeas\alpha(A)$ is infinite.

Finally, Barlow and Taylor \cite{bat2} consider a version of the covering dimension which they call the discrete Hausdorff dimension. If the limit supremum in the definition of $\covmeas\alpha$ is taken over centered cubes, the resulting dimensional quantity may be shown to be equal to $\dim_L$ (in Barlow and Taylor's notation); in particular, one does not recover the upper mass dimension.

\subsection{Invariance under quasi-isometric embeddings}\label{sec:invunderqie}

The goal of this section is to show that the counting and mass dimensions are invariant under maps which are Lipschitz up to an additive constant. By definition, rounding any set in $\R^d$ to the integer lattice $\Z^d$ does not affect its regularity or dimension; more generally, the same is true for rounding to any full rank sublattice. What is more, it is easy to check that $\cdim(A) = \cdim(c A + z)$ for all $c > 0$ and $z \in \R^d$. These examples are special cases of the fact that both dimensions are invariant under quasi-isometric embeddings.

The asymptotic notation used below is standard. Given two functions $f$ and $g$, we write $f \ll_{a,b,\ldots} g$ or $g \gg_{a,b,\ldots} f$ if there exists a constant $K > 0$ depending at most on the quantities $a, b, \ldots$ for which $f(x) \leq K g(x)$ for all $x$ in the common domain of $f$ and $g$ (unless another domain is specified). We write $f \asymp g$ if both $f \ll g$ and $g \ll f$.

\begin{definition}[\cite{delaharpe}]\label{def:qie}
Let $(X,d_X)$ and $(Y,d_Y)$ be metric spaces. A map $f: X \to Y$ is a \emph{quasi-isometric embedding} if there exist constants $K \geq 1$ and $M \geq 0$ such that for all $x_1, x_2 \in X$,
\[\frac{1}{K}d_X \big(x_1,x_2 \big) - M \leq d_Y \big(f(x_1),f(x_2) \big) \leq Kd_X \big(x_1,x_2 \big) + M.\]
Note that $f$ need not be injective.
\end{definition}

The main results are Propositions \ref{prop:quasiisometryinvariance} and \ref{prop:quasiisometryinvarianceofmass} giving that the counting and mass measures of a set and those of its image under quasi-isometric embeddings are equivalent. After proving Proposition \ref{prop:quasiisometryinvariance}, we derive the corollaries necessary later in this work. We will not need Proposition \ref{prop:quasiisometryinvarianceofmass} specifically, but we provide a proof of it for completeness.

\begin{lemma}\label{lem:quasilemma}
For all bounded $B \subseteq \R^k$ and all $(K,M)$-quasi-isometric embeddings $f: B \to \R^d$,
\[\big| \lfloor B \rfloor \big| \ll_{k,d,K,M} \big| \lfloor f(B) \rfloor \big|.\]
\end{lemma}

\begin{proof}
It suffices to show that if $B' \subseteq B$ is such that $f(B')$ is contained in a unit cube, then
\[\big| \lfloor B' \rfloor \big| \ll_{k,d,K,M} 1.\]
Indeed, the conclusion in the lemma follows immediately by considering the partition of $f(B)$ induced by $\Z^d$ and summing.

It follows from the definition of a $(K,M)$-quasi-isometric embedding and the assumption that $f(B')$ is contained in a unit cube that
\[\frac{1}{K} \diam (B') - M \leq \diam \big(f(B')\big) \ll_{d} 1.\]
Therefore, $\diam (B') \ll_{d,K,M} 1$. For all finite sets $A \subseteq \Z^{k}$, $|A| \ll_{k} \big(\diam(A) + 1 \big)^{k}$, and so
\[\big| \lfloor B' \rfloor \big| \ll_{k} \big(\diam(\lfloor B' \rfloor) + 1\big)^{k} \leq \big(\diam( B') + 2\big)^{k} \ll_{k,d,K,M} 1.\]
\end{proof}

\begin{prop}\label{prop:quasiisometryinvariance}
For all $A \subseteq \R^k$ and all $(K,M)$-quasi-isometric embeddings $f: A \to \R^d$,
\[\cmeas{\alpha,k}(A) \asymp_{\alpha,k,d,K,M} \cmeas{\alpha,d}\big(f(A)\big).\]
In particular, $A$ is $\alpha$-counting regular if and only if $f(A)$ is $\alpha$-counting regular, and $\cdim(A) = \cdim \big(f(A) \big)$.
\end{prop}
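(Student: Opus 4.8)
The plan is to establish the two-sided estimate $\cmeas{\alpha,k}(A) \asymp \cmeas{\alpha,d}(f(A))$ by comparing, for each cube in one space, the count inside it with the count inside a suitably-sized cube in the other space, with all constants depending only on $\alpha, k, d, K, M$. The natural strategy is to fix a cube $C \subseteq \R^d$ (say with $\|C\| \geq 1$) and bound $\big| \lfloor f(A) \rfloor \cap C \big|$ in terms of the number of lattice points of $A$ landing in the preimage region, and conversely. The key geometric fact is that a $(K,M)$-quasi-isometric embedding distorts distances by at most a factor $K$ plus an additive $M$, so the $f$-preimage of a cube of side length $\|C\|$ is contained in a set of diameter $\ll_{K,M,d} \|C\| + 1 \asymp \|C\|$ (since $\|C\| \geq 1$), hence in a cube of side length $\asymp \|C\|$.

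First I would prove the inequality $\cmeas{\alpha,d}(f(A)) \ll \cmeas{\alpha,k}(A)$. Given a cube $C \subseteq \R^d$ with $\|C\| \geq 1$, the set $A_C = \{a \in A \mid f(a) \in C\}$ maps into $C$, so its diameter is $\ll_{K,M,d} \|C\|$ by the quasi-isometry lower bound; thus $A_C$ is contained in a cube $C'$ in $\R^k$ with $\|C'\| \ll \|C\|$. Applying Lemma \ref{lem:quasilemma} to the restriction $f|_{A_C}$ — noting that $\big| \lfloor f(A) \rfloor \cap C \big| \leq \big| \lfloor f(A_C) \rfloor \big|$ is controlled by $\big| \lfloor A_C \rfloor \big|$ up to a multiplicative constant when we instead bound the count in $C$ by the count in the preimage cube — gives
\[
\frac{\big| \lfloor f(A) \rfloor \cap C \big|}{\|C\|^\alpha} \ll_{\alpha,k,d,K,M} \frac{\big| \lfloor A \rfloor \cap C' \big|}{\|C'\|^\alpha},
\]
where the $\|C\|^\alpha$ and $\|C'\|^\alpha$ are interchangeable up to a constant because $\|C'\| \asymp \|C\|$. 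Taking the limit supremum as $\|C\| \to \infty$ (which forces $\|C'\| \to \infty$) yields the claimed bound.

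The reverse inequality $\cmeas{\alpha,k}(A) \ll \cmeas{\alpha,d}(f(A))$ runs along the same lines but uses the quasi-isometry upper bound: a cube $C' \subseteq \R^k$ of side length $\|C'\|$ has $f$-image of diameter $\ll_{K,M} \|C'\| + M \asymp \|C'\|$, hence contained in a cube $C \subseteq \R^d$ with $\|C\| \asymp \|C'\|$, and then Lemma \ref{lem:quasilemma} applied directly to $f|_{A \cap C'}$ gives $\big| \lfloor A \rfloor \cap C' \big| \ll \big| \lfloor f(A) \cap C \big|$. Combining the two inequalities gives the asymptotic equivalence; the statements about $\alpha$-regularity and equality of dimensions follow formally, since an $\asymp$ relation between the $\alpha$-counting measures simultaneously preserves the conditions $0 < \cmeas\alpha < \infty$ (regularity) and the vanishing/non-vanishing thresholds that define the dimension. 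I expect the main obstacle to be bookkeeping the constants cleanly across both directions — in particular, ensuring that the side-length comparisons $\|C'\| \asymp \|C\|$ hold uniformly once $\|C\| \geq 1$ (so that the additive constant $M$ is absorbed) and that Lemma \ref{lem:quasilemma} is invoked on the correctly-restricted map rather than on $f$ globally, since $f$ need not be injective.
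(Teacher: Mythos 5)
Your second direction, $\cmeas{\alpha,k}(A) \ll \cmeas{\alpha,d}\big(f(A)\big)$, is essentially the paper's argument and is sound once you patch a small boundary issue: a point $a \in A$ with $\lfloor a \rfloor \in C'$ need not itself lie in $C'$, so Lemma \ref{lem:quasilemma} should be applied to $A$ intersected with a slightly enlarged cube (the paper uses $[C']_2$, the cube with the same center and side length $\|C'\|+2$) rather than to $A \cap C'$; this costs nothing asymptotically.

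The genuine problem is in your first direction. To bound $\big|\lfloor f(A)\rfloor \cap C\big|$ from above by $\big|\lfloor A \rfloor \cap C'\big|$ you need the inequality $\big|\lfloor f(A_C)\rfloor\big| \ll \big|\lfloor A_C \rfloor\big|$, but Lemma \ref{lem:quasilemma} states the \emph{opposite} inequality, $\big|\lfloor B \rfloor\big| \ll \big|\lfloor f(B)\rfloor\big|$: it says $f$ cannot collapse many lattice cells of the domain into one cell of the target, which is exactly not the control you need here. As cited, the step does not follow. The claim you need is nonetheless true --- partition $A_C$ according to the unit cubes of $\Z^k$; by the \emph{upper} quasi-isometry bound each piece has image of diameter at most $K\sqrt{k}+M$, hence contributes $\ll_{d,K,M} 1$ points to $\lfloor f(A_C)\rfloor$, and summing over the $\big|\lfloor A_C\rfloor\big|$ pieces gives $\big|\lfloor f(A_C)\rfloor\big| \ll_{k,d,K,M} \big|\lfloor A_C\rfloor\big|$ --- so your route can be repaired by proving this mirror image of Lemma \ref{lem:quasilemma}. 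The paper avoids the issue entirely with a cleaner reduction: it proves only the direction $\cmeas{\alpha,k}(A) \ll \cmeas{\alpha,d}\big(f(A)\big)$ (your second one) and then observes that any right inverse $g: f(A) \to A$ is itself a $(K,KM)$-quasi-isometric embedding, so the same one-sided estimate applied to $g$, combined with $g\big(f(A)\big) \subseteq A$ and monotonicity of the counting measures, yields the reverse inequality. You should either adopt that reduction or state and prove the reverse counting lemma explicitly; as written, the first half of your argument rests on an inequality that the cited lemma does not provide.
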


\begin{proof}
(In what follows, dependence on $\alpha, k, d, K,$ and $M$ in the asymptotic constants will be suppressed.) It suffices to show that
\begin{align}\label{eqn:quasionedirection}\cmeas{\alpha,k}(A) \ll \cmeas{\alpha,d}\big(f(A)\big).\end{align}
To see why, note that if $f: A \to \R^d$ is a $(K,M)$-quasi-isometric embedding, then any right inverse $g: f(A) \to A$ is a $(K,KM)$-quasi-isometric embedding. Therefore, by (\ref{eqn:quasionedirection}) for $g$ and the fact that $g(f(A)) \subseteq A$,
\[\cmeas{\alpha,d}\big(f(A)\big) \ll \cmeas{\alpha,k} \big(g\big(f(A)\big)\big) \leq \cmeas{\alpha,k} (A).\]

Let $C \subseteq \R^{k}$ be a cube, $\|C\| \geq 1$. Denote by $[C]_2$ the cube with the same center as $C$ and with side length $\|C\| + 2$. Since $f$ is a quasi-isometric embedding, there exists a cube $C' \subseteq \R^{d}$ satisfying
\begin{align}\label{eqn:choiceoftildec}\big\lfloor f(A \cap [C]_2 )\big\rfloor \subseteq C' \qquad \text{and} \qquad \|C'\| \ll \|C\|.\end{align}
Since $\lfloor A \rfloor \cap C \subseteq \big\lfloor A \cap [C]_2 \big\rfloor$, Lemma \ref{lem:quasilemma} and (\ref{eqn:choiceoftildec}) give
\[\big|\lfloor A \rfloor \cap C\big| \leq \big| \lfloor A \cap [C]_2 \rfloor \big| \ll \big| \lfloor f(A \cap [C]_2) \rfloor \big| \leq \big| \lfloor f(A) \rfloor \cap C' \big|.\]
Let $(C_n)_n$, $\|C_n\| \to \infty$, be a sequence of cubes in $\R^{k}$ along which the limit supremum in $\cmeas{\alpha,k}(A)$ is achieved, and, for each $n \in \N$, let $C_n'$ be a cube in $\R^{d}$ satisfying (\ref{eqn:choiceoftildec}) for $C_n$. By the remarks above,
\[\cmeas{\alpha,k}(A) = \lim_{n \to \infty} \frac{\big|\lfloor A \rfloor \cap C_n \big|}{\|C_n\|^\alpha} \ll \limsup_{n \to \infty} \frac{\big|\lfloor f(A) \rfloor \cap C_n'\big|}{\|C_n'\|^\alpha} \leq \cmeas{\alpha,d}\big(f(A)\big).\]
\end{proof}

Note that the asymptotic constants appearing in the conclusion of the proposition are independent of both the set $A$ and the embedding $f$ (as long as it is $(K,M)$-quasi-isometric).

We will now develop two corollaries which will be used several times throughout this work. For the counting measures, the corollaries will follow immediately from Proposition \ref{prop:quasiisometryinvariance}. For the mass measures, the map $f$ (and a right inverse for it) in both corollaries will be such that $|f(a)|$ is controlled by $|a|$; this will allow us to prove the desired results by modifying the proof of Proposition \ref{prop:quasiisometryinvariance} by taking $C$ and $C'$ to be centered cubes in (\ref{eqn:choiceoftildec}).

\begin{definition}
The Hausdorff distance between subsets $A, B \subseteq \R^d$ is
\[\hdist (A,B) = \inf \big\{ \delta \geq 0 \ \big| \ A \subseteq B_\delta \text{ and } B \subseteq A_\delta \big\},\]
where $A_\delta$ denotes the closed $\delta$-neighborhood of $A$ in $\R^d$.
\end{definition}

\begin{cor}\label{cor:qiecorone}
Let $A, B \subseteq \R^d$. If $\hdist(A,B) \leq \ell < \infty$, then
\[\cmeas\alpha(A) \asymp_{\alpha,d,\ell} \cmeas\alpha(B).\]
In particular, $A$ is $\alpha$-counting regular if and only if $B$ is $\alpha$-counting regular, and $\cdim(A) = \cdim(B)$.

The same conclusions hold for the mass measures and dimension.
\end{cor}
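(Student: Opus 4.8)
The plan is to deduce Corollary \ref{cor:qiecorone} directly from Proposition \ref{prop:quasiisometryinvariance} by exhibiting an appropriate quasi-isometric embedding. The key observation is that two sets at bounded Hausdorff distance can be related by a set map that moves each point only a bounded amount. Specifically, since $\hdist(A,B) \leq \ell$, every point $a \in A$ lies within distance $\ell$ of some point of $B$; this lets me define a map $f \colon A \to B$ by sending each $a \in A$ to a (fixed but arbitrary) choice of nearest point $f(a) \in B$ with $|a - f(a)| \leq \ell$. The triangle inequality then gives, for all $a_1, a_2 \in A$,
\[
|a_1 - a_2| - 2\ell \leq |f(a_1) - f(a_2)| \leq |a_1 - a_2| + 2\ell,
\]
so $f$ is a $(1, 2\ell)$-quasi-isometric embedding of $A$ into $\R^d$ whose image lies in $B$.

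The difficulty is that Proposition \ref{prop:quasiisometryinvariance} compares $A$ with $f(A)$, not with all of $B$, so I need to close the gap on both sides. For the lower containment, $f(A) \subseteq B$ together with monotonicity (Lemma \ref{lem:basiccountingprops}, property \emph{ii.}) gives $\cmeas\alpha(f(A)) \leq \cmeas\alpha(B)$, and combined with $\cmeas\alpha(A) \asymp \cmeas\alpha(f(A))$ from the proposition this yields $\cmeas\alpha(A) \ll_{\alpha,d,\ell} \cmeas\alpha(B)$. By the symmetry of the Hausdorff distance, I can run the identical argument with the roles of $A$ and $B$ reversed, choosing a map $g \colon B \to A$ with $|b - g(b)| \leq \ell$, to obtain $\cmeas\alpha(B) \ll_{\alpha,d,\ell} \cmeas\alpha(A)$. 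Together these give $\cmeas\alpha(A) \asymp_{\alpha,d,\ell} \cmeas\alpha(B)$. The statements about regularity and equality of counting dimension follow immediately, exactly as recorded in the conclusion of Proposition \ref{prop:quasiisometryinvariance}.

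For the mass-dimension assertion, the strategy adapts in the manner flagged in the paragraph preceding the corollary: I cannot simply invoke Proposition \ref{prop:quasiisometryinvariance} (which is proved only for the counting measures), so instead I reprove the one-directional estimate using \emph{centered} cubes. The crucial point is that the maps $f$ and $g$ above satisfy $|f(a)| \leq |a| + \ell$ and $|g(b)| \leq |b| + \ell$, so they move points only boundedly toward or away from the origin. Consequently, in the analogue of equation (\ref{eqn:choiceoftildec}) one may take $C$ and $C'$ to be \emph{centered} cubes with $\|C'\| \ll \|C\|$: a centered cube $C$ of side $\|C\|$ has its image under $f$ contained in a centered cube $C'$ whose side length exceeds $\|C\|$ by a bounded amount. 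Running the proof of Proposition \ref{prop:quasiisometryinvariance} verbatim but restricting to centered cubes throughout then yields $\mmeas\alpha(A) \asymp_{\alpha,d,\ell} \mmeas\alpha(f(A)) \leq \mmeas\alpha(B)$, and symmetrically the reverse, giving the claimed equivalence of mass measures and the equality $\mdim(A) = \mdim(B)$.

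The main obstacle I anticipate is the bookkeeping needed to verify that the centered-cube version of (\ref{eqn:choiceoftildec}) genuinely holds with an asymptotic constant depending only on $d$ and $\ell$; this hinges precisely on the estimate $|f(a)| \leq |a| + \ell$, which is what distinguishes a Hausdorff-distance-bounded map from a general quasi-isometric embedding (the latter need not respect the origin at all). Once that control over the position relative to the origin is established, the remainder is a routine transcription of the counting-measure argument with ``cube'' replaced by ``centered cube'' everywhere.
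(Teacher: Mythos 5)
Your proposal is correct and follows essentially the same route as the paper: the bounded Hausdorff distance yields $(1,2\ell)$-quasi-isometric embeddings $A \to B$ and $B \to A$, monotonicity of the measures closes the gap between $f(A)$ and $B$, and the mass-measure case is handled by rerunning the proof of Proposition \ref{prop:quasiisometryinvariance} with centered cubes using the bound $|f(a)| \leq |a| + \ell$. (The only nitpick is that a nearest point in $B$ need not exist if $B$ is not closed, so one should pick $f(a)$ with $|a - f(a)| \leq \ell + \eps$ for some fixed $\eps$, which only perturbs the quasi-isometry constants.)
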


\begin{proof}
If $\hdist(A,B) \leq \ell$, then there exist $(1,2\ell)$-quasi-isometric embeddings $A \to B$ and $B \to A$. By monotonicity of the counting measures, this suffices to give the result for the counting measures and dimension immediately by Proposition \ref{prop:quasiisometryinvariance}.

For the analogous result for the mass measures and dimension, consider the proof of Proposition \ref{prop:quasiisometryinvariance}. Since points move by at most a distance $\ell$ under both quasi-isometric embeddings, if $C$ is a centered cube, the cube $C'$ in (\ref{eqn:choiceoftildec}) may be chosen to be centered and satisfy $\|C'\| \leq \|C\| + 2 \ell$. By taking the cubes $C_n$ to be centered, the result for the mass measures follows.
\end{proof}

This corollary will allow us the simplifying step of passing from $A$ to $\lfloor A \rfloor$ or from $TA$ to $T \lfloor A \rfloor$ (where $T$ will be linear) when determining the counting and mass regularity and dimension of these sets. Also note that by considering the shift map $A \to A + s$, this corollary shows that the mass regularity and dimension of $A$ is invariant under changes to the ``base point'' with respect to which the mass measures are defined.

\begin{cor}\label{cor:qiecortwo}
Let $R \subseteq \R^k$ be a linear subspace and $T: R \to \R^d$ be an invertible linear transformation. Set $K = \max \big( \|T\|, \|T^{-1}\| \big)$, where $\|T\|$ denotes the operator norm of $T$. For all $A \subseteq R$,
\[\cmeas{\alpha,k}(A) \asymp_{\alpha,k,d,K} \cmeas{\alpha,d}(TA),\]
 In particular, $A$ is $\alpha$-counting regular if and only if $TA$ is $\alpha$-counting regular, and $\cdim(A) = \cdim(TA)$.

The same conclusions hold for the mass measures and dimension.
\end{cor}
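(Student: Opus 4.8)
The plan is to recognize the linear map $T$ as a quasi-isometric embedding and then invoke Proposition \ref{prop:quasiisometryinvariance} for the counting measures, handling the mass measures by the centered-cube modification indicated in the discussion preceding Corollary \ref{cor:qiecorone}.

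First I would check that $T: R \to \R^d$ is a $(K,0)$-quasi-isometric embedding in the sense of Definition \ref{def:qie}. Since $T$ is linear, for $x_1, x_2 \in R$ we have $\|T x_1 - T x_2\| = \|T(x_1 - x_2)\| \leq \|T\| \, \|x_1 - x_2\| \leq K \|x_1 - x_2\|$, and, applying $T^{-1}$, $\|x_1 - x_2\| = \|T^{-1}(Tx_1 - Tx_2)\| \leq \|T^{-1}\| \, \|Tx_1 - Tx_2\| \leq K \|Tx_1 - Tx_2\|$, so that $\tfrac1K \|x_1-x_2\| \leq \|Tx_1 - Tx_2\| \leq K\|x_1 - x_2\|$ (passing between the Euclidean and cube metrics changes these constants only by factors depending on $k$ and $d$). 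Restricting to $A \subseteq R \subseteq \R^k$, the map $T|_A: A \to \R^d$ is therefore a quasi-isometric embedding, and Proposition \ref{prop:quasiisometryinvariance} gives $\cmeas{\alpha,k}(A) \asymp_{\alpha,k,d,K} \cmeas{\alpha,d}(TA)$ at once, together with the stated equivalence of $\alpha$-counting regularity and the equality $\cdim(A) = \cdim(TA)$.

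For the mass measures I would follow the recipe described before Corollary \ref{cor:qiecorone}: re-run the proof of Proposition \ref{prop:quasiisometryinvariance} with every cube taken to be centered. The point that makes this legitimate is that $T$ and its inverse are linear and fix the origin, so $\|Tx\| \leq K\|x\|$ and $\|T^{-1}y\| \leq K\|y\|$; hence $T$ carries a centered cube of side $\ell$ into a centered cube of side $\ll_{k,d,K} \ell$. Concretely, given a centered cube $C \subseteq \R^k$ I would choose the cube $C'$ in (\ref{eqn:choiceoftildec}) to be centered with $\lfloor T(A \cap [C]_2) \rfloor \subseteq C'$ and $\|C'\| \ll \|C\|$; the counting step $\big|\lfloor A \rfloor \cap C\big| \ll \big| \lfloor T(A) \rfloor \cap C'\big|$ via Lemma \ref{lem:quasilemma} is then unchanged. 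Taking the $C_n$ to be centered cubes along which $\mmeas{\alpha,k}(A)$ is achieved yields $\mmeas{\alpha,k}(A) \ll \mmeas{\alpha,d}(TA)$, and applying the same argument to the linear right inverse $T^{-1}: TA \to A$ gives the reverse inequality; the mass regularity equivalence and $\mdim(A) = \mdim(TA)$ follow from the resulting measure equivalence exactly as in the counting case.

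The only real obstacle is this centering step for the mass measures: a general quasi-isometric embedding may translate the center of a cube arbitrarily far from the origin, which would destroy containment in a comparably sized \emph{centered} cube and hence break the argument for $\mmeasempty$. Linearity of $T$ (equivalently, $T$ fixing the origin) is exactly what rules this out, and it is the single feature of $T$ beyond its being quasi-isometric that the mass-measure statement requires. Everything else is a verbatim repetition of the counting argument with ``cube'' replaced by ``centered cube.''
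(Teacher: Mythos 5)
Your proof is correct and follows essentially the same route as the paper: observe that $T$ and $T^{-1}$ are $(K,0)$-quasi-isometric embeddings, apply Proposition \ref{prop:quasiisometryinvariance} for the counting measures, and for the mass measures note that linearity lets the cube $C'$ in (\ref{eqn:choiceoftildec}) be chosen centered with side length $\ll_K \|C\|$. Your explicit identification of origin-fixing as the one property beyond quasi-isometry needed for the mass-measure case matches the paper's reasoning exactly.
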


\begin{proof}
Note that $T$ and $T^{-1}$ are $(K,0)$-quasi-isometric embeddings. This suffices to give the result for the counting measures and dimension immediately by Proposition \ref{prop:quasiisometryinvariance}.

For the analogous result for the mass measures and dimension, note that since $T$ is linear, if $C$ is a centered cube, the cube $C'$ in (\ref{eqn:choiceoftildec}) may be chosen to be centered and satisfy $\|C'\| \leq K\|C\|$. The same holds for $T^{-1}$, and the result for the mass measures follows.
\end{proof}

Finally, for completeness, we include a proof of the fact that the mass dimension is invariant under general quasi-isometric embeddings. The statement is slightly weaker than the analogous statement for the counting measures since there is now dependence on where the map sends points with respect to the origin.

\begin{prop}\label{prop:quasiisometryinvarianceofmass}
Let $X \subseteq \R^k$ and $f: X \to \R^d$ be a quasi-isometric embedding. For all subsets $A \subseteq X$,
\[\mmeas{\alpha,k}(A) \asymp_{\alpha,f} \mmeas{\alpha,d}\big(f(A)\big).\]
In particular, $A$ is $\alpha$-mass regular if and only if $f(A)$ is $\alpha$-mass regular, and $\mdim(A) = \mdim(f(A))$.
\end{prop}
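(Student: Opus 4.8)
The plan is to adapt the proof of Proposition \ref{prop:quasiisometryinvariance} to the centered-cube setting, the essential new feature being that I must track \emph{where} the map $f$ sends points relative to the origin. As in the counting case, it suffices to establish the single inequality
\[\mmeas{\alpha,k}(A) \ll_{\alpha,f} \mmeas{\alpha,d}\big(f(A)\big),\]
since a right inverse $g: f(X) \to X$ of $f$ is again a quasi-isometric embedding (with constants depending on $f$), and applying the inequality to $g$ together with $g(f(A)) \subseteq A$ yields the reverse bound. Lemma \ref{lem:quasilemma} remains available verbatim, so the only thing to redo is the choice of the companion cube in the analogue of (\ref{eqn:choiceoftildec}): I must replace the arbitrary cubes $C, C'$ by \emph{centered} cubes.

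First I would let $C = [-\ell,\ell)^d$ be a centered cube in $\R^k$ with $\|C\| = 2\ell$ large, and consider $[C]_2$, the centered cube of side length $2\ell + 2$. Because $f$ is a $(K,M)$-quasi-isometric embedding, the image $\lfloor f(A \cap [C]_2)\rfloor$ has diameter $\ll_{f} \ell$; however, unlike in the non-centered proof, this image need not sit near the origin --- it may be translated by a fixed amount governed by where $f$ sends the origin (or any fixed basepoint). The key observation is that this translation is \emph{bounded independently of $\ell$}: writing $s = |f(0)|$ (or the image of any fixed point of $X$), the definition of a quasi-isometric embedding gives $|f(a)| \leq K|a| + M + s$ for $a \in [C]_2$, so $\lfloor f(A \cap [C]_2)\rfloor$ is contained in a centered cube $C'$ with $\|C'\| \leq 2(K\ell + M + s) \ll_{f} \ell$. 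With this centered $C'$ in hand, the chain of inequalities
\[\big|\lfloor A \rfloor \cap C\big| \leq \big|\lfloor A \cap [C]_2\rfloor\big| \ll_{f} \big|\lfloor f(A \cap [C]_2)\rfloor\big| \leq \big|\lfloor f(A)\rfloor \cap C'\big|\]
goes through exactly as before, and dividing by $\|C\|^\alpha \asymp_f \|C'\|^\alpha$ and taking a limit supremum over centered cubes $C_n$ achieving $\mmeas{\alpha,k}(A)$ completes the argument.

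The main obstacle --- and the reason the statement is weaker, with the asymptotic constant now depending on the full map $f$ rather than just its quasi-isometry constants $(K,M)$ --- is precisely this basepoint shift: the mass measures are defined relative to the origin, so a quasi-isometric embedding that displaces the origin by a large fixed distance $s$ forces a correspondingly large additive constant in the side length of $C'$. This is harmless for the \emph{dimension} (a bounded multiplicative distortion of cube side lengths does not affect the logarithmic growth rate) and for \emph{regularity} (it only changes $\mmeas\alpha$ by a bounded factor), but it cannot be absorbed into a constant depending only on $(K,M)$, which is why Corollaries \ref{cor:qiecorone} and \ref{cor:qiecortwo} had to handle the mass case separately under the hypotheses that points move by a bounded amount or that $f$ is linear. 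I expect no genuine difficulty beyond carefully bookkeeping this shift; everything else is a routine transcription of the centered-cube modification already indicated in the proofs of those two corollaries.
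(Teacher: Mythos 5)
Your proposal is correct and follows essentially the same route as the paper: reduce to one inequality via a right inverse, reuse Lemma \ref{lem:quasilemma}, and establish the centered-cube analogue of (\ref{eqn:choiceoftildec}) from the bound $|f(x)| \ll_f |x| + 1$ obtained by comparing to a fixed basepoint of $X$. The paper merely phrases that bound slightly more carefully (taking $\inf_{x_0 \in X}(K|x_0| + |f(x_0)|)$ rather than assuming $0 \in X$), which your parenthetical already accounts for.
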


\begin{proof}
The proof follows the lines of the proof of Proposition \ref{prop:quasiisometryinvariance}, but extra care must be taken to establish the analogue to (\ref{eqn:choiceoftildec}). As in the proof of Proposition \ref{prop:quasiisometryinvariance}, it suffices to show that for all $A \subseteq X$,
\[\mmeas{\alpha,k}(A) \ll_{\alpha,f} \mmeas{\alpha,d}\big(f(A)\big).\]
Suppose $f$ is a $(K,M)$-quasi-isometric embedding, and fix $x_0 \in X$. For all $x \in X$,
\[\big|f(x)\big| \leq \big|f(x) - f(x_0)\big| + \big|f(x_0)\big| \leq K|x| + K|x_0| + M + \big|f(x_0)\big|.\]
Since $x_0 \in X$ was arbitrary, for all $x \in X$,
\begin{align}\label{eqn:importantbound} \big|f(x)\big| \leq K |x| + M + \inf_{x_0 \in X} \big(K|x_0| + \big|f(x_0)\big| \big) \ll_f |x| + 1.\end{align}
Let $A \subseteq X$. Let $C \subseteq \R^k$ be a centered cube, $\|C\| \geq 1$, and let $[C]_2$ be as in the proof of Proposition \ref{prop:quasiisometryinvariance}. By (\ref{eqn:importantbound}), there exists a centered cube $C' \subseteq \R^d$ satisfying
\begin{align*}\big\lfloor f(A \cap [C]_2 ) \big\rfloor \subseteq C' \qquad \text{and} \qquad \|C'\| \ll_f \|C\|.\end{align*}
The rest of the proof follows exactly as in the proof of Proposition \ref{prop:quasiisometryinvariance}.
\end{proof}

\subsection{Product sets, compatibility, and universality}\label{sec:products}

Understanding the behavior of the counting and mass measures of Cartesian products of sets is important in applications of the Marstrand-type theorems to come. The terminology and main definitions in this section follow those of Lima and Moreira \cite{lima}.

It will be useful throughout this section to write cubes $C \subseteq \R^{d_1} \times \cdots \times \R^{d_m}$ as a product of cubes $C = C_1 \times \cdots \times C_m$,
where each $C_i \subseteq \R^{d_i}$ and $\|C_i\| = \|C\|$. In the other direction, given cubes $C_i \subseteq \R^{d_i}$, $1 \leq i \leq m$, with equal side lengths, the set $C_1 \times \cdots \cdots \times C_m$ is a cube in $\R^{d_1} \times \cdots \times \R^{d_m}$. We will use this notation consistently in this section without further mention.

\begin{lemma}\label{lem:measofprodsets}
For each $1 \leq i \leq m$, let $A_i \subseteq \R^{d_i}$ be non-empty and $\alpha_i \geq 0$, and set $\alpha = \sum_i \alpha_i$. Then
\[\max_{1\leq i \leq m} \cmeas\alpha (A_i) \leq \cmeas{\alpha} \big( A_1 \times \cdots \times A_m \big) \leq \prod_{i=1}^m \cmeas{\alpha_i}(A_i),\]
where the product on the right hand side is taken to be infinity if any one of the terms is infinity.

The same conclusions hold for the mass measures.
\end{lemma}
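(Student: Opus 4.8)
The plan is to prove the two inequalities separately, working directly with the defining $\limsup$ over cubes. For the right-hand inequality $\cmeas{\alpha}(A_1 \times \cdots \times A_m) \leq \prod_i \cmeas{\alpha_i}(A_i)$, I would use the product structure of cubes noted at the start of the section: any cube $C \subseteq \R^{d_1} \times \cdots \times \R^{d_m}$ decomposes as $C = C_1 \times \cdots \times C_m$ with each $\|C_i\| = \|C\|$, and crucially $\lfloor A_1 \times \cdots \times A_m \rfloor = \lfloor A_1 \rfloor \times \cdots \times \lfloor A_m \rfloor$, so that the count factors:
\[
\big| \lfloor A_1 \times \cdots \times A_m \rfloor \cap C \big| = \prod_{i=1}^m \big| \lfloor A_i \rfloor \cap C_i \big|.
\]
Dividing by $\|C\|^\alpha = \prod_i \|C\|^{\alpha_i} = \prod_i \|C_i\|^{\alpha_i}$ factors the whole ratio as a product of the ratios $\big| \lfloor A_i \rfloor \cap C_i \big| \big/ \|C_i\|^{\alpha_i}$. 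Since each factor is at most $\sup_{\|C_i\| \geq \ell} \big| \lfloor A_i \rfloor \cap C_i \big| \big/ \|C_i\|^{\alpha_i}$, taking the supremum over cubes of side length at least $\ell$ and then $\ell \to \infty$ gives the product of the individual $\limsup$s. The only subtlety is the submultiplicativity of $\limsup$ together with the convention about infinite factors, which is routine since all quantities are nonnegative.

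For the left-hand inequality $\max_i \cmeas\alpha(A_i) \leq \cmeas\alpha(A_1 \times \cdots \times A_m)$, I would fix an index $i$ and a sequence of cubes $(C_{i,n})_n$ in $\R^{d_i}$, $\|C_{i,n}\| \to \infty$, along which $A_i$ nearly achieves $\cmeas\alpha(A_i)$. For each other index $j \neq i$, since $A_j$ is non-empty, $\lfloor A_j \rfloor$ contains at least one lattice point; I would choose a cube $C_{j,n}$ of side length $\|C_{i,n}\|$ containing at least one point of $\lfloor A_j \rfloor$ (enlarging a unit cube about a fixed point of $\lfloor A_j \rfloor$ works once $\|C_{i,n}\| \geq 1$). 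Then the product cube $C_n = C_{1,n} \times \cdots \times C_{m,n}$ satisfies
\[
\big| \lfloor A_1 \times \cdots \times A_m \rfloor \cap C_n \big| = \prod_{j=1}^m \big| \lfloor A_j \rfloor \cap C_{j,n} \big| \geq \big| \lfloor A_i \rfloor \cap C_{i,n} \big|,
\]
because each factor with $j \neq i$ is at least $1$. Dividing by $\|C_n\|^\alpha = \|C_{i,n}\|^\alpha$ and passing to the $\limsup$ recovers $\cmeas\alpha(A_i)$ as a lower bound, and taking the maximum over $i$ finishes the inequality.

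The main obstacle, such as it is, lies in the left-hand inequality rather than the factoring: I need the non-emptiness hypothesis on the $A_j$ to guarantee that the ``filler'' cubes $C_{j,n}$ actually catch a lattice point, so that the extra factors are at least $1$ and do not dilute the count. This is exactly where the non-emptiness assumption is used. For the mass measures, the identical argument goes through verbatim once all cubes are taken to be \emph{centered}: the product of centered cubes is centered, and a centered cube of large side length automatically contains a fixed point of each non-empty $\lfloor A_j \rfloor$, so the filler step is even easier. I would therefore remark that the mass case follows by replacing every cube with a centered cube throughout.
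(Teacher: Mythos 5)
Your proof is correct and follows essentially the same route as the paper's: factor the count over a product cube $C = C_1 \times \cdots \times C_m$ and use submultiplicativity of the limit supremum for the upper bound, and for the lower bound pad the distinguished cube with filler cubes that each catch at least one point of the corresponding non-empty $A_j$. The paper states the filler step more tersely (``the limit supremum is attained along a sequence with $A_i \cap C_i$ non-empty''), but your explicit construction is the same argument.
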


\begin{proof}
Note that
\[\cmeas{\alpha} \big( A_1 \times \cdots \times A_m \big) =\limsup_{\|C\| \rightarrow \infty} \prod_{i = 1}^m \frac{| A_i \cap C_i |}{\|C\|^{\alpha_i}} \leq \prod_{i = 1}^m \cmeas{\alpha_i} (A_i),\]
where the inequality holds if we take the product on the right hand side to be infinity if any one of the terms is infinity. For the lower bound, write
\[\cmeas{\alpha} \big( A_1 \times \cdots \times A_m \big) =\limsup_{\|C\| \rightarrow \infty} \left( \frac{| A_j \cap C_j |}{\|C\|^{\alpha}} \prod_{\substack{i=1 \\ i \neq j}}^m \big| A_i \cap C_i \big| \right) \geq \cmeas\alpha(A_j),\]
where the inequality holds since the limit supremum is attained along a sequence with $A_i \cap C_i$ non-empty.
\end{proof}

\begin{cor}\label{cor:prodsets}
For all non-empty $A_i \subseteq \R^{d_i}$, $1 \leq i \leq m$,
\begin{align}\label{eqn:diminequality}\max_{1 \leq i \leq m} \cdim(A_i) \leq \cdim \left( A_1 \times \cdots \times A_m \right) \leq \sum_{i=1}^m \cdim(A_i).\end{align}

The same conclusions hold for the mass dimension.
\end{cor}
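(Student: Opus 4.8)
The plan is to derive Corollary \ref{cor:prodsets} directly from Lemma \ref{lem:measofprodsets} by applying the definition of the counting (resp. mass) dimension as an infimum over exponents with vanishing measure. The statement is a clean dimensional reformulation of the product measure inequalities just proved, so the argument should be a formal manipulation with essentially no new geometric input. I would prove only the counting case; the mass case follows by replacing $\cmeasempty$ and $\cdim$ with $\mmeasempty$ and $\mdim$ throughout, using the corresponding conclusion of the lemma.

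\emph{Lower bound.} For each fixed $j$, monotonicity of the product under inclusion of a single factor gives a projection-type containment: a translate of $A_j$ sits inside $A_1 \times \cdots \times A_m$ (after choosing a base point in each other factor), so by monotonicity (Lemma \ref{lem:basiccountingprops}, property \emph{ii.}) together with Corollary \ref{cor:qiecorone}, $\cdim(A_j) \leq \cdim(A_1 \times \cdots \times A_m)$. Taking the maximum over $j$ yields the left inequality in \eqref{eqn:diminequality}. Alternatively, and more in keeping with the lemma, the left inequality of Lemma \ref{lem:measofprodsets} shows that whenever $\alpha < \cdim(A_j)$ one has $\cmeas{\alpha}(A_1 \times \cdots \times A_m) \geq \cmeas{\alpha}(A_j) = \infty$, so $\cdim(A_1 \times \cdots \times A_m) \geq \alpha$; letting $\alpha \uparrow \cdim(A_j)$ and maximizing over $j$ gives the bound.

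\emph{Upper bound.} Here I would use the right inequality of Lemma \ref{lem:measofprodsets}. Fix any $\alpha_i > \cdim(A_i)$ for each $i$. By property \emph{v.} of Lemma \ref{lem:basiccountingprops}, each factor satisfies $\cmeas{\alpha_i}(A_i) = 0$, so the product $\prod_i \cmeas{\alpha_i}(A_i) = 0$ (none of the factors is infinite), and hence, setting $\alpha = \sum_i \alpha_i$, the lemma gives $\cmeas{\alpha}(A_1 \times \cdots \times A_m) = 0$. By the definition of the counting dimension as the infimum of exponents with zero measure, this forces $\cdim(A_1 \times \cdots \times A_m) \leq \sum_i \alpha_i$. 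Since the $\alpha_i > \cdim(A_i)$ were arbitrary, letting each $\alpha_i \downarrow \cdim(A_i)$ yields $\cdim(A_1 \times \cdots \times A_m) \leq \sum_i \cdim(A_i)$, completing \eqref{eqn:diminequality}.

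**The main thing to be careful about** is the degenerate case in the product of measures: the lemma's right-hand inequality takes the product to be $\infty$ if any factor is infinite, so for the upper bound I must choose each $\alpha_i$ \emph{strictly} above $\cdim(A_i)$ to guarantee every factor measure is exactly zero (not infinite or indeterminate), ensuring the product is genuinely $0$. I would also note at the outset that all $A_i$ are assumed non-empty, which is exactly the hypothesis under which Lemma \ref{lem:measofprodsets} was stated, so no separate treatment of empty factors is needed. Beyond this bookkeeping the proof is a short two-paragraph deduction.
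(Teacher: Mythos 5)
Your proposal is correct and follows essentially the same route as the paper: the lower bound comes from applying the left inequality of Lemma \ref{lem:measofprodsets} with all but one exponent set to zero and letting that exponent increase to $\cdim(A_j)$ from below, and the upper bound comes from choosing each $\alpha_i$ strictly above $\cdim(A_i)$ so that every factor measure vanishes and the product bound gives zero measure. Your caution about taking the $\alpha_i$ strictly above the dimensions to avoid the $0 \cdot \infty$ degeneracy is exactly the point the paper's terse proof relies on (your first, embedding-based argument for the lower bound would want Proposition \ref{prop:quasiisometryinvariance} rather than Corollary \ref{cor:qiecorone}, but your measure-theoretic alternative is the paper's argument and is complete).
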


\begin{proof}
For $\alpha_1 < \cdim(A_1)$, we have $\cmeas{\alpha_1}(A_1) = \infty$, and so Lemma \ref{lem:measofprodsets} with $\alpha_i = 0$ for $i \neq 1$ implies that $\cmeas{\alpha_1}(A_1 \times \cdots \times A_m) = \infty$. Letting $\alpha_1$ tend to $\cdim(A_1)$ from below shows that 
\[\cdim(A_1) \leq \cdim(A_1 \times \cdots \times A_m),\]
and the lower bound in (\ref{eqn:diminequality}) follows by the same considerations with the other sets. For the upper bound, for each $1 \leq i \leq m$, let $\alpha_i$ tend to $\cdim(A_i)$ from above and use in the same way Lemma \ref{lem:basiccountingprops} and the upper bound from Lemma \ref{lem:measofprodsets}.
\end{proof}


The remainder of this section is focused on tools which will aid in applications.  It will be most important to have information on the dimension and regularity of $A_1 \times \cdots \times A_m$ based on the dimension and regularity of the $A_i$'s.

\begin{definition}\label{def:compatible}
A collection of non-empty sets $\{A_1, \ldots, A_m \}$, $A_i \subseteq \R^{d_i}$, is \emph{counting compatible} if
\[\cdim \big( A_1 \times \cdots \times A_m \big) = \sum_{i=1}^m \cdim(A_i).\]
The collection $\{A_1, \ldots, A_m \}$ is \emph{strongly counting compatible} if each $A_i$ is $\alpha_i$-counting regular and $A_1 \times \cdots \times A_m$ is $\left( \sum_{i=1}^m \alpha_i \right)$-counting regular.  Two sets $A$ and $B$ are (strongly) counting compatible if the collection $\{A,B\}$ is (strongly) counting compatible.

\emph{Mass compatibility} and \emph{strong mass compatibility} are defined analogously.
\end{definition}

It follows immediately from the definitions that strong compatibility implies compatibility. We now proceed to give necessary and sufficient conditions for these two properties.

\begin{lemma}\label{lem:compatibility}
For $1 \leq i \leq m$, let $A_i \subseteq \R^{d_i}$ be non-empty. The collection $\{A_1, \ldots, A_m\}$ is counting compatible if and only if there exists a sequence $(\ell_n)_n \subseteq \N$, $\ell_n \rightarrow \infty$, such that for all $1 \leq i \leq m$, there exists a sequence $(C_{n,i})_n$ of cubes in $\R^{d_i}$ with $\|C_{n,i}\| = \ell_n$ along which $A_i$ achieves its counting dimension (recall Definition \ref{def:achievesdimension}).

The same conclusions hold for mass compatibility with ``counting dimension'' replaced by ``mass dimension'' and where the cubes $C_{n,i}$ are centered.
\end{lemma}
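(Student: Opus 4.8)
The plan is to prove Lemma \ref{lem:compatibility} by relating counting compatibility of the collection to the existence of a \emph{common} sequence of side lengths $(\ell_n)_n$ along which every $A_i$ simultaneously achieves its counting dimension. I would treat the counting case in detail and remark that the mass case is identical with ``centered'' inserted throughout. Write $\alpha_i = \cdim(A_i)$ and $\alpha = \sum_i \alpha_i$.

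For the reverse (``if'') direction, suppose such sequences exist along a common sequence $(\ell_n)_n$, so that $|\lfloor A_i \rfloor \cap C_{n,i}| \geq \ell_n^{\alpha_i - \eps}$ eventually for each fixed $\eps > 0$. Since the $C_{n,i}$ all have the common side length $\ell_n$, their product $C_n = C_{n,1} \times \cdots \times C_{n,m}$ is a genuine cube in $\R^{d_1} \times \cdots \times \R^{d_m}$ with $\|C_n\| = \ell_n$, and $|\lfloor A_1 \times \cdots \times A_m \rfloor \cap C_n| = \prod_i |\lfloor A_i \rfloor \cap C_{n,i}|$. Taking logarithms and dividing by $\log \ell_n$ gives $\cdim(A_1 \times \cdots \times A_m) \geq \sum_i (\alpha_i - \eps)$; letting $\eps \to 0$ and combining with the upper bound $\cdim(A_1 \times \cdots \times A_m) \leq \sum_i \alpha_i$ from Corollary \ref{cor:prodsets} yields equality, i.e.\ counting compatibility.

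For the forward (``only if'') direction, suppose the collection is counting compatible, and choose by the explicit formula (\ref{eqn:explicitformula}) a sequence of cubes $(C_n)_n$ in the product space, $\|C_n\| = \ell_n \to \infty$, along which $A_1 \times \cdots \times A_m$ achieves its dimension $\alpha$. Decompose each $C_n = C_{n,1} \times \cdots \times C_{n,m}$ with $\|C_{n,i}\| = \ell_n$, so that
\[
\frac{\log |\lfloor A_1 \times \cdots \times A_m \rfloor \cap C_n|}{\log \ell_n} = \sum_{i=1}^m \frac{\log |\lfloor A_i \rfloor \cap C_{n,i}|}{\log \ell_n}.
\]
The left side tends to $\alpha = \sum_i \alpha_i$, while each summand on the right is at most $\frac{\log |\lfloor A_i \rfloor \cap C_{n,i}|}{\log \ell_n} \leq \alpha_i + o(1)$ by Lemma \ref{lem:basiccountingprops}, property \emph{v.} (more precisely, by the bound $|\lfloor A_i \rfloor \cap C| \leq K \|C\|^{\alpha_i + \eps}$ valid for all cubes). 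This is the crux: a sum of $m$ quantities, each with $\limsup$ at most $\alpha_i$, is forced to converge to $\sum_i \alpha_i$, so each individual term must converge to its own supremal value $\alpha_i$. Hence along this common sequence $(\ell_n)_n$, each $A_i$ achieves its counting dimension along $(C_{n,i})_n$, which is exactly what we wanted.

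The step I expect to require the most care is this forced-convergence argument in the forward direction: I need a clean squeeze showing that if $\sum_i a_{n,i} \to \sum_i \alpha_i$ while $\limsup_n a_{n,i} \leq \alpha_i$ for each $i$, then $a_{n,i} \to \alpha_i$ for every $i$. This is elementary but must be stated carefully, since it relies on the uniform upper bound holding for \emph{every} cube (not merely along a subsequence) and on the finiteness of $m$; passing to a single subsequence realizing all the limits simultaneously is what makes it work, and I would note that one may extract such a subsequence since each $a_{n,i}$ is bounded. The rest is bookkeeping with the explicit logarithmic formula and the product structure of cubes established in the opening remarks of the section.
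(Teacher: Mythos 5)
Your proposal is correct and follows essentially the same route as the paper: the ``if'' direction forms the product cube $C_n = C_{n,1} \times \cdots \times C_{n,m}$ and sums the logarithmic ratios against the upper bound from Corollary \ref{cor:prodsets}, and the ``only if'' direction decomposes a sequence of cubes realizing $\cdim(A_1 \times \cdots \times A_m)$ and uses exactly the forced-convergence observation (sum converging to $\sum_i \alpha_i$ with each term's $\limsup$ at most $\alpha_i$ forces each term to converge to $\alpha_i$). The only cosmetic difference is that no subsequence extraction is needed for that last step -- $\liminf_n a_{n,j} \geq \sum_i \alpha_i - \sum_{i \neq j} \limsup_n a_{n,i} = \alpha_j$ follows directly -- which is how the paper states it.
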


\begin{proof}
For the ``if'' direction, for each $n \in \N$, consider the cube $C_n = C_{n,1} \times \cdots \times C_{n,m}$ in $\R^{d_1} \times \cdots \times \R^{d_m}$ of side length $\ell_n$. By (\ref{eqn:explicitformula}) and the assumption that $A_i$ achieves its counting dimension along $(C_{n,i})_n$,
\[\cdim(A_1 \times \cdots \times A_m) \geq \lim_{n \to \infty} \sum_{i =1}^m \frac{\log |A_i \cap C_{n,i} |}{\log \|C_{n}\|} = \sum_{i = 1}^m \cdim(A_i).\]
The reverse inequality follows by Corollary \ref{cor:prodsets}.

For the ``only if'' direction, assuming the collection is counting compatible, there exists a sequence of cubes $(C_n)_n$ in $\R^{d_1} \times \cdots \times \R^{d_m}$, $\|C_n\| \rightarrow \infty$, which satisfies
\[\lim_{n \rightarrow \infty} \sum_{i=1}^m \frac{\log \left| A_i \cap C_{n,i} \right|}{\log \|C_n\|} = \sum_{i=1}^m \cdim(A_i).\]
Since each $\|C_n\| = \|C_{n,i}\|$ and $\limsup_{n \rightarrow \infty} \log \left| A_i \cap C_{n,i} \right| \big / \log \|C_n\| \leq \cdim(A_i)$, we have
\[\lim_{n \rightarrow \infty} \frac{\log \left| A_i \cap C_{n,i} \right|}{\log \|C_n\|} = \lim_{n \rightarrow \infty} \frac{\log \left| A_i \cap C_{n,i} \right|}{\log \|C_{n,i}\|} = \cdim(A_i).\]
Now the sequence $(\ell_n = \|C_n\|)_n$ and, for each $1 \leq i \leq m$, the sequences $(C_{n,i})_n$ are such that $\|C_{n,i}\| = \ell_n$ and $A_i$ achieves its counting dimension along $(C_{n,i})_n$.
\end{proof}

The previous lemma says that two sets are compatible if and only if their dimensions are computable along cubes of equal side lengths. At the other extreme, equality on the left hand side in (\ref{eqn:diminequality}) is possible if the sets are ``totally mutually incompatible.'' It is possible, for example, to have $\cdim(A_1) = \cdim(A_2) = 1$ and $\cdim(A_1 \times A_2) = 1$.

\begin{lemma}\label{lem:strongcompatibility}
For $1 \leq i \leq m$, let $A_i \subseteq \R^{d_i}$ be $\alpha_i$-counting regular. The collection $\{A_1, \ldots, A_m\}$ is strongly counting compatible if and only if there exists a sequence $(\ell_n)_n \subseteq \N$, $\ell_n \rightarrow \infty$, such that for all $1 \leq i \leq m$, there exists a sequence $(C_{n,i})_n$ of cubes in $\R^{d_i}$ with $\|C_{n,i}\| = \ell_n$ for which
\[\lim_{n \to \infty} \prod_{i = 1}^m \frac{| A_i \cap C_{n,i} |}{\|C_{n,i}\|^{\alpha_i}} > 0.\]

The same conclusions hold for strong mass compatibility where the cubes $C_{n,i}$ are centered.
\end{lemma}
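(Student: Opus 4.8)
The plan is to reduce both directions to the identity, extracted from the proof of Lemma \ref{lem:measofprodsets}, that
\[\cmeas{\alpha}\big(A_1 \times \cdots \times A_m\big) = \limsup_{\|C\| \to \infty} \prod_{i=1}^m \frac{|A_i \cap C_i|}{\|C\|^{\alpha_i}},\]
where $\alpha = \sum_{i=1}^m \alpha_i$ and $C = C_1 \times \cdots \times C_m$ ranges over product cubes with $\|C_i\| = \|C\|$. First I would record two consequences. Since each $A_i$ is $\alpha_i$-counting regular, the upper bound in Lemma \ref{lem:measofprodsets} gives $\cmeas{\alpha}(A_1 \times \cdots \times A_m) \leq \prod_i \cmeas{\alpha_i}(A_i) < \infty$, so the product set always has finite $\alpha$-counting measure. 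Consequently, because each $A_i$ is already assumed $\alpha_i$-counting regular, the collection is strongly counting compatible if and only if $\cmeas{\alpha}(A_1 \times \cdots \times A_m) > 0$, i.e. if and only if the displayed limit supremum is strictly positive.

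Next I would convert positivity of this limit supremum into the existence of the asserted sequences. For the ``only if'' direction, strong compatibility gives $\cmeas\alpha(A_1 \times \cdots \times A_m) > 0$, so by the definition of the limit supremum there is a sequence of product cubes $(C_n)_n$, $\|C_n\| \to \infty$, along which $\prod_i |A_i \cap C_{n,i}| / \|C_n\|^{\alpha_i}$ converges to $\cmeas\alpha(A_1\times\cdots\times A_m) > 0$; writing $\ell_n = \|C_n\|$ and recalling $\|C_{n,i}\| = \|C_n\|$ exhibits the required component sequences, save that $\ell_n$ need not be an integer. For the ``if'' direction, given sequences $(C_{n,i})_n$ with common side length $\ell_n \to \infty$, I would form the product cubes $C_n = C_{n,1} \times \cdots \times C_{n,m}$, which satisfy $\|C_n\| = \ell_n \to \infty$, and read off from the displayed identity that $\cmeas\alpha(A_1 \times \cdots \times A_m) \geq \lim_n \prod_i |A_i \cap C_{n,i}|/\ell_n^{\alpha_i} > 0$; together with the finiteness above and the hypothesized regularity of each $A_i$, this is precisely strong compatibility.

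The only genuinely fiddly point is the requirement $\ell_n \in \N$. To arrange it in the ``only if'' direction I would enlarge each $C_{n,i}$ to the cube $\tilde C_{n,i} \supseteq C_{n,i}$ of integer side length $\lceil \ell_n \rceil$ sharing its base point (or, in the mass case, its center), so that $|A_i \cap \tilde C_{n,i}| \geq |A_i \cap C_{n,i}|$ while $\lceil \ell_n \rceil^{\alpha_i} \leq \ell_n^{\alpha_i}(1 + \ell_n^{-1})^{\alpha_i}$. Since $\ell_n \to \infty$, the factor $(1+\ell_n^{-1})^{\alpha}$ tends to $1$, so the product of normalized counts over the enlarged cubes retains the same positive limit, and replacing $\ell_n$ by $\lceil \ell_n \rceil$ yields integer common side lengths. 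I expect this rounding estimate, together with the bookkeeping needed to pass from the limit supremum to an honest convergent sequence, to be the main (if modest) obstacle. Finally, the mass statement follows verbatim: the same identity and upper bound hold for $\mmeas\alpha$ by the mass version of Lemma \ref{lem:measofprodsets}, a product of centered cubes is centered, and enlarging a centered cube to integer side length keeps it centered, so every step goes through with ``cube'' replaced by ``centered cube.''
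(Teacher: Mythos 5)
Your proof is correct and takes essentially the same route as the paper's: both reduce strong compatibility to positivity of $\cmeas\alpha(A_1\times\cdots\times A_m)$ via the finiteness supplied by the upper bound in Lemma \ref{lem:measofprodsets}, and then read the conclusion off the product-cube expression for that limit supremum (passing to a subsequence where the limit exists). Your explicit rounding to integer side lengths just fills in a detail the paper leaves implicit in its double-supremum formula over $(\ell_n)_n\subseteq\N$.
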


\begin{proof}
Let $\alpha = \sum_{i=1}^m \alpha_i$, and note that by the upper bound in Lemma \ref{lem:measofprodsets}, $\cmeas\alpha(A_1 \times \cdots \times A_m) < \infty$. Therefore, the strong counting compatibility of the collection is equivalent to the positivity of $\cmeas\alpha(A_1 \times \cdots \times A_m)$.

Note that
\[\cmeas{\alpha} \big( A_1 \times \cdots \times A_m \big) = \sup_{\substack{(\ell_n)_n \\ \ell_n \to \infty}} \sup_{\substack{(C_n)_n \\ \|C_n\| = \ell_n}} \limsup_{n \to \infty} \prod_{i = 1}^m \frac{| A_i \cap C_{n,i} |}{\|C_{n,i}\|^{\alpha_i}},\]
where the first supremum is taken over sequences in $\N$ and the second is taken over sequences of cubes in $\R^{d_1} \times \cdots \times \R^{d_m}$. The conclusion follows immediately from this expression (by passing, if necessary, to a subsequence along which the limit exists).
\end{proof}

\begin{remark}
As a consequence of Lemmas \ref{lem:compatibility} and \ref{lem:strongcompatibility}, we see that a collection is (strongly) counting or mass compatible if and only if all of its subcollections are (strongly) counting or mass compatible.
\end{remark}

Sets which are compatible with all other sets are useful in applications.

\begin{definition}
A set is \emph{(strongly) counting universal} if it is (strongly) counting compatible with all other (regular) sets.  A set is \emph{(strongly) mass universal} if it is (strongly) mass compatible with all other (regular) sets.
\end{definition}

A set is universal if its dimension may be measured along any sequence of cube side lengths. The proof of the following lemma follows directly from Lemmas \ref{lem:compatibility} and \ref{lem:strongcompatibility}.

\begin{lemma}\label{lem:universality}
A set $A \subseteq \R^d$ is counting universal if there exists a sequence of cubes $(C_n)_n \subseteq \R^d$ with $\|C_n\| = n$ along which $A$ achieves its counting dimension. If $A$ is  $\alpha$-counting regular, then it is strongly counting universal if there exists a sequence of cubes $(C_n)_n \subseteq \R^d$ with $\|C_n\| = n$ for which
\[\liminf_{n \to \infty} \frac{| A \cap C_n |}{\|C_n\|^\alpha} > 0.\]

The same conclusions hold for (strong) mass universality with ``counting dimension/regularity'' replaced by ``mass dimension/regularity'' and where the cubes $C_n$ are centered.
\end{lemma}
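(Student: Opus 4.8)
The plan is to reduce both assertions directly to the characterizations of compatibility in Lemmas \ref{lem:compatibility} and \ref{lem:strongcompatibility}. To establish counting universality, I must show that $A$ is counting compatible with an arbitrary non-empty set $B \subseteq \R^{d'}$. By Lemma \ref{lem:compatibility}, this amounts to producing a single sequence $(\ell_n)_n \subseteq \N$, $\ell_n \to \infty$, along which both $A$ and $B$ simultaneously achieve their counting dimensions through cubes of side length $\ell_n$. The hypothesis furnishes, for every integer side length, a cube along which $A$ achieves its dimension; the freedom to use any integer side length for $A$ is precisely what permits the matching.

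First I would arrange that $B$ achieves its counting dimension along cubes of \emph{integer} side length. By the explicit formula (\ref{eqn:explicitformula}), there is a sequence of cubes $(D_k)_k$, $\|D_k\| \to \infty$, with $\log|\lfloor B\rfloor \cap D_k|\big/\log\|D_k\| \to \cdim(B)$. Replacing each $D_k$ by the smallest cube $D_k' \supseteq D_k$ of integer side length $\lceil \|D_k\|\rceil$ only increases the count while altering the logarithm of the side length by a factor tending to $1$; since the ratio is bounded above by $\cdim(B) + o(1)$ for large cubes, $B$ achieves its dimension along $(D_k')_k$ as well. Setting $\ell_k = \|D_k'\|$ and pairing with the hypothesized cube $C_{\ell_k}$ for $A$ yields a common side-length sequence, so Lemma \ref{lem:compatibility} gives counting compatibility of $\{A, B\}$; as $B$ was arbitrary, $A$ is counting universal.

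For strong counting universality, let $B$ be $\beta$-counting regular and $A$ be $\alpha$-counting regular. By Lemma \ref{lem:strongcompatibility} it suffices to find a common integer side-length sequence $(\ell_n)_n$ with cubes $C_n, D_n$ of side length $\ell_n$ for which $\liminf_n \big(|\lfloor A\rfloor \cap C_n|\big/\ell_n^\alpha\big)\big(|\lfloor B\rfloor \cap D_n|\big/\ell_n^\beta\big) > 0$. I would run the same rounding argument on the normalized counts rather than on the logarithmic ratios: choosing $(D_k)_k$ with $|\lfloor B\rfloor \cap D_k|\big/\|D_k\|^\beta$ bounded below by a positive constant and enlarging to integer side length $\ell_k = \lceil\|D_k\|\rceil$ preserves a positive lower bound, since $\lceil s\rceil^\beta\big/s^\beta \to 1$. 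The hypothesis supplies $\liminf_n |\lfloor A\rfloor \cap C_n|\big/n^\alpha > 0$ along all integer side lengths, so the product of the two normalized counts stays bounded below along $(\ell_k)_k$, and Lemma \ref{lem:strongcompatibility} applies. The mass statements follow verbatim after replacing every cube by a centered cube; for centered cubes the enlargement to integer side length is canonical, since a centered cube is determined by its side length alone, which if anything simplifies the rounding step.

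The main obstacle is the side-length matching: the hypothesis pins $A$ to integer side lengths, whereas an arbitrary (regular) set $B$ achieves its dimension, a priori, along cubes of arbitrary real side lengths. The rounding-to-integers argument above is what bridges this gap, and the only point requiring care is to verify that neither the logarithmic ratio (in the universality case) nor the positive lower bound on the normalized count (in the strong case) is destroyed by the rounding; both facts follow from $\lceil s\rceil\big/s \to 1$ as $s \to \infty$.
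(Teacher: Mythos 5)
Your proposal is correct and is exactly the argument the paper intends: the paper omits the proof, stating only that it ``follows directly from Lemmas \ref{lem:compatibility} and \ref{lem:strongcompatibility},'' and your reduction to those lemmas---with the rounding-to-integer-side-lengths step supplying the common sequence $(\ell_n)_n \subseteq \N$ that they require---is that direct deduction, carried out carefully. The only (trivial) point left implicit in the strong case is passing to a subsequence along which the product of normalized counts actually converges, as Lemma \ref{lem:strongcompatibility} is phrased with a limit rather than a limit inferior.
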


We conclude this section by giving examples of universal and non-universal sets as well as examples of compatible collections.

\begin{examples}\label{ex:univexamples}\leavevmode
\begin{enumerate}[i.]
\item Let $f: \R \to \R$ be a polynomial of degree $n \geq 1$ and leading coefficient $a_n \neq 0$. It is an exercise to check that
\begin{align*}
\big(2|a_n| \big)^{-1/n} &\leq \liminf_{\ell \to \infty} \frac{\big| \lfloor f(\Z) \rfloor \cap [-\ell,\ell) \big|}{(2\ell)^{1/n}}\\
&\leq \limsup_{\ell \to \infty} \frac{\big| \lfloor f(\Z) \rfloor \cap [-\ell,\ell) \big|}{(2\ell)^{1/n}} \leq 2 \big(2|a_n| \big)^{-1/n},
\end{align*}
whereby the set $f(\Z)$ is strongly mass and strongly counting universal.

\item By the prime number theorem, the set of prime numbers is mass and counting universal (but not strongly so, since the set of primes is neither counting nor mass regular).

\item \label{item:integercantortwo} Let $E_M$ be the integer Cantor set associated to the base $b \in \N$ and binary matrix $M$ described in example \ref{item:integercantor}. in Section \ref{sec:examples}. As shown in \cite{lima}, there exists a $K = K(M) > 0$ such that for all $n \in \N$,
\[K^{-1}\lambda_+(M)^{n} \leq \big|E_M \cap [0,b^{n+1}) \big| \leq K \lambda_+(M)^{n},\]
and $\cdim(E_M) = \log \lambda_+(M) \big/ \log b$. For $\ell \in \N$, if $n$ is such that $b^n \leq \ell < b^{n+1}$, then
\[\frac{1}{K 2^{\cdim(E_M)} \lambda_+(M)^{2}} \leq \frac{\big|E_M \cap [-\ell,\ell) \big|}{(2\ell)^{\cdim(E_M)}} \leq \frac{K}{2^{\cdim(E_M)}}.\]
This shows that $E_M$ is mass regular and that
\[\liminf_{\ell \to \infty} \frac{\big| E_M \cap [-\ell,\ell) \big|}{(2\ell)^{\cdim(E_M)}} > 0,\]
whereby $E_M$ is strongly mass universal by Lemma \ref{lem:universality}. In fact, it is true that $E_M$ is counting regular (see \cite{lima}, Lemma 3.4) and so, by the previous observation, also strongly counting universal.

\item Let $E$ be the generalized IP set corresponding to the sequences $(k_n)_n$ and $(d_n)_n$ of positive integers described in example \ref{item:genipset}. in Section \ref{sec:examples}. As shown in \cite{lima}, $\cdim(E) = \limsup_{n \to \infty} \log \big( k_1 \cdots k_n \big) \big/ \log \big( k_n d_{n} \big)$. Assuming that
\begin{align}\label{eqn:genipcondition} \liminf_{n \to \infty} \frac{\log \big( k_1 \cdots k_n \big)}{\log \big(k_{n+1}d_{n+1}\big)} = \cdim(E),\end{align}
by associating to $\ell \geq 1$ the $n \geq 1$ for which $k_n d_n \leq \ell < k_{n+1}d_{n+1}$, we have
\begin{align*}
\cdim(E) &\geq \limsup_{\ell \to \infty} \frac{\log \big| E \cap [-\ell,\ell) \big|}{\log (2\ell)} \\
&\geq \liminf_{\ell \to \infty} \frac{\log \big| E \cap [-\ell,\ell) \big|}{\log (2\ell)} \geq \liminf_{n \to \infty} \frac{\log \big( k_1 \cdots k_n \big)}{\log \big(k_{n+1}d_{n+1}\big)} = \cdim(E).
\end{align*}
Therefore, assuming (\ref{eqn:genipcondition}), it follows that
\[\lim_{\ell \to \infty} \frac{\log \big| E \cap [-\ell,\ell) \big|}{\log (2\ell)} = \cdim(E),\]
whereby $E$ is counting universal and mass universal by Lemma \ref{lem:universality}.

\item \label{item:notuniversal} The following is a concrete example of a set in $\R$ which is counting regular but not counting universal. It is similar in spirit the set constructed in Section 4.3 of \cite{lima}. Fix $0 < \alpha < 1$. For $0 < \epsilon < \alpha$ and $N = N(\alpha,\eps) \in \N$ sufficiently large, it is not hard to check that the set $E = E(\alpha,\epsilon,N)$ of $\lfloor N^\alpha \rfloor$ points equally spaced in the interval $[0,N]$ has the following properties:
\begin{enumerate}[i.]
\item For all cubes $C$ with $1 \leq \|C\| \not\in \big[N^{\frac{1-\alpha}{1-\alpha+\epsilon}},N^{\frac{\alpha}{\alpha-\epsilon}}\big]$, $| E \cap C| \leq \|C\|^{\alpha - \epsilon}$; and
\item $\sup_{\|C\| \geq 1}  |E \cap C| \big / \|C\|^\alpha \in (1-\epsilon,1+\epsilon)$.
\end{enumerate}
Let $(\epsilon_i)_i \subseteq (0,\alpha)$, $\epsilon_i \to 0$, $(N_i)_i \subseteq \N$, $N_i \to \infty$ sufficiently fast enough that the intervals $\big[N^{\frac{1-\alpha}{1-\alpha+\epsilon}},N^{\frac{\alpha}{\alpha-\epsilon}}\big]$ in i. above are disjoint, and $E_i = E_i(\alpha,\epsilon_i,N_i) \subseteq [0,N_i]$ be the corresponding sequence of finite sets. Let $(z_i)_i \subseteq \N$ be increasing sufficiently quickly to guarantee that if $C$ is a cube intersecting both $E_i + z_i$ and $E_j + z_j$, $i \neq j$, then $|\cup_i (E_i + z_i) \cap C| \leq \|C\|^{\alpha/2}$.

Let $A = \cup_{i \text{ even}} (E_i + z_i)$ and $B = \cup_{i \text{ odd}} (E_i + z_i)$. From property ii. and the rate of increase of the $z_i$'s, both $A$ and $B$ are $\alpha$-counting regular. But, $A$ and $B$ are not counting compatible: from property i., the dimension of $B$ along any sequence of cubes along which $A$ achieves its counting dimension will be less than $\alpha - \epsilon$. It follows that neither $A$ nor $B$ is universal. It may be shown with an argument along these lines that there exists a countably infinite family of mutually counting incompatible subsets of $\R$, each of dimension 1.
\end{enumerate}
\end{examples}

Finally, we give examples of collections which are compatible. Note that if a set is (strongly) counting or mass universal, then its inclusion or exclusion from a collection of sets does not affect the (strong) counting or mass compatibility of the collection.

\begin{examples}\label{ex:compatcollections}\leavevmode
\begin{enumerate}[i.]
\item Let $A \subseteq \R$. The collection $\{A, \ldots, A\}$ is counting and mass compatible. If $A$ is counting (resp. mass) regular, then $\{A, \ldots, A\}$ is strongly counting (resp. mass) compatible. More generally, if $\{A_1, \ldots, A_m\}$ is (strongly) compatible, then repeating any number of sets in the collection results in another collection which is (strongly) compatible.
\item Any collection comprised of the set of primes, images of $\Z$ under polynomials, integer Cantor sets, and/or generalized IP sets satisfying (\ref{eqn:genipcondition}) is counting and mass compatible. Since each of these sets are counting and mass universal, any collection comprised of these sets and one other arbitrary set is also counting and mass compatible.
\item Any collection comprised of the set of images of $\Z$ under polynomials and/or integer Cantor sets is strongly counting and mass compatible. Since each of these sets are strongly counting and mass universal, any collection comprised of these sets and one other arbitrary counting and mass regular set is also strongly counting and mass compatible.
\end{enumerate}
\end{examples}

\section{Marstrand-type theorems and applications}

In this section, we prove several Marstrand-type theorems for the counting and mass dimensions and give some applications.

Marstrand's theorem is one of the fundamental theorems in geometric measure theory; it relates the Hausdorff dimension of a set and the Hausdorff dimension of the image of that set under almost all orthogonal projections. More precisely, let $E \subseteq \R^2$ be Borel, and denote by $P_\theta$ the orthogonal projection of $\R^2$ onto the line which forms an angle of $\theta$ with the $x$-axis. Marstrand proved that for Lebesgue-almost every $\theta \in [0, \pi]$,
\[\dim_H (P_\theta E) = \min \big(1, \dim_H(E) \big),\]
where $\dim_H$ denotes the Hausdorff dimension.  Marstrand's theorem was generalized to higher dimensions by Mattila \cite{mattila}.

Lima and Moreira \cite{lima} proved a Marstrand-type theorem for the counting dimension by relating the counting dimension of $A + \lambda B$ for almost every $\lambda$ to $\cdim(A) + \cdim(B)$; see Theorem B from the introduction. The following Marstrand-type theorem for the counting dimension both generalizes and strengthens this result.

\begin{theorem}\label{thm:bettermarstrand}
Let $A \subseteq \R^d$ and $0 \leq k \leq d$. If $\alpha \neq k$ is such that $\cmeas \alpha (A) > 0$, then for almost every projection $P: \R^d \to \R^d$ with range $\R^k \times \{0\}^{d-k}$, $\cmeas {\min(k,\alpha)} (PA) > 0$. In particular, for almost every such projection $P$,
\[\cdim \big( PA \big) \geq \min \big(k, \cdim(A) \big)\]
and, if $\cdim(A) > k$, then $\cmeas k (PA) > 0$.
\end{theorem}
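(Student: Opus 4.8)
The plan is to parametrize projections $P\colon\R^d\to\R^d$ with range $\R^k\times\{0\}^{d-k}$ by their null spaces, which range over the Grassmannian $G(d,d-k)$, and to integrate a suitable counting quantity against the $O(d)$-invariant measure on this space. The core of a Marstrand-type argument is an \emph{energy} or \emph{second-moment} estimate: one shows that the average (over all projections $P$) of some expression controlling $|\lfloor PA\rfloor\cap C'|$ cannot be too large, which forces the fibers of the typical projection to spread out rather than collapse. Concretely, I would fix a sequence of cubes $(C_n)_n$ in $\R^d$ along which $\cmeas\alpha(A)$ is witnessed, so that $|\lfloor A\rfloor\cap C_n|\gg\|C_n\|^\alpha$, and work inside each $C_n$ with the finite point set $A_n=\lfloor A\rfloor\cap C_n$.

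**The key estimate.**
For a fixed scale, the plan is to bound, for a projection $P$, the number of pairs of points of $A_n$ whose images under $P$ are within a bounded distance — equivalently, to estimate $\sum_{x\neq y\in A_n}(\text{indicator that }|Px-Py| \text{ is small})$. The decisive computation is that, \emph{averaging over $P$}, the probability that two points $x\neq y$ project close together decays like a power of $|x-y|$: one expects a bound of the shape $\int (\text{number of }y\text{ with }|Px-Py|\le t)\,dP \ll t^k (1+|x-y|)^{-k}$ or similar, reflecting that a $k$-dimensional projection concentrates mass on $|x-y|^{-k}$ of the directions. Summing this over $x,y\in A_n$ and using $|A_n|\gg\|C_n\|^\alpha$ produces an $L^1$ (in $P$) bound on a quantity measuring how many collisions occur in the image; since $\alpha>k$ (or, for the measure statement, $\min(k,\alpha)$), the diagonal term dominates and one concludes that for almost every $P$ the image $\lfloor PA_n\rfloor$ has at least $\gg\|C_n\|^{\min(k,\alpha)}$ distinct points inside a cube of side $\ll\|C_n\|$.

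**Passing to the conclusion.**
Having the collision bound, I would deduce via a Chebyshev/Borel--Cantelli argument over the sequence $(C_n)_n$ (or a Fubini argument exchanging the average over $P$ with the limit supremum over cubes) that the set of exceptional projections has measure zero, giving $\cmeas{\min(k,\alpha)}(PA)>0$ for almost every $P$. The invariance of the counting measures under invertible linear maps (Corollary \ref{cor:qiecortwo}) and under bounded perturbations (Corollary \ref{cor:qiecorone}) lets me reduce to a standard coordinate form for $P$ and pass freely between $PA$, $P\lfloor A\rfloor$, and lattice-rounded versions without affecting the measures. The final two assertions are immediate specializations: taking $\alpha$ just below $\cdim(A)$ and using that $\cmeas\alpha(A)=\infty$ for such $\alpha$ (Lemma \ref{lem:basiccountingprops}, part \emph{iv.}) yields $\cdim(PA)\ge\min(k,\cdim(A))$, and the case $\cdim(A)>k$ gives $\cmeas k(PA)>0$ directly.

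**The main obstacle.**
The hard part will be the averaged collision estimate: unlike the continuous Marstrand setting, here the projected set must be rounded back to the lattice, so two distinct points of $A_n$ can collide under $\lfloor P\,\cdot\,\rfloor$ not only when they are genuinely close but also through rounding, and the cubes $C'$ in the image have a forced lower bound on side length. Controlling the interaction between the rounding, the lower scale cutoff, and the integral over the Grassmannian — and in particular getting the power of $|x-y|$ sharp enough that the sum over $A_n$ converges with the correct exponent $\min(k,\alpha)$ — is where the real work lies. I expect this to require a careful discretization of the direction space at each scale and a dyadic decomposition of the pairwise distances $|x-y|$.
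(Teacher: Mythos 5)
Your outline tracks the paper's proof closely: parameterize projections by a compact set of $k\times(d-k)$ matrices, reduce to lattice subsets via Corollary \ref{cor:qiecorone}, prove the transversality bound $m(\compact_{z,z'})\ll|z-z'|^{-k}$ (Lemma \ref{lem:bestgeometry}), bound the energy $\Delta(A_n)=\int S_{A_n}(M)\,dm(M)$ by a dyadic decomposition of pairwise distances (Lemma \ref{lem:bettercontrolonpairs}), convert this by Chebyshev and Cauchy--Schwarz into the statement that $\big|\lfloor P_M A_n\rfloor\big|\gg\delta\,|A_n|\,\|A_n\|^{-\max(0,\alpha-k)}$ outside a set of $M$ of measure $\ll\delta$ uniformly in $n$ (Proposition \ref{prop:bettersmallprojs}), and finish with a Borel--Cantelli-type argument over the sequence $(C_n)_n$. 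So the strategy is the right one and is essentially the paper's.

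There is, however, one concrete gap. To make the dyadic energy sum converge with the exponent $\max(0,\alpha-k)$, you need a uniform \emph{upper} bound $|A_n\cap C|\le K\|C\|^\alpha$ for \emph{every} cube $C$ with $\|C\|\ge 1$ (the $(K,\alpha)$-counting bounded condition), not merely the lower bound $|A_n|\gg\|C_n\|^\alpha$ that your sketch invokes. Positivity of $\cmeas\alpha(A)$ alone allows $A_n$ to be badly clustered at intermediate scales, in which case the off-diagonal sum $\sum_{z,z'}m(\compact_{z,z'})$ can be far larger than $|A_n|\,\|A_n\|^{\max(0,\alpha-k)}$ and Cauchy--Schwarz yields nothing. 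The paper secures the needed two-sided control by first passing to an $\alpha$-counting regular subset of $A$ via Proposition \ref{prop:regularsubset} (legitimate by monotonicity of the counting measures), and that extraction is where a substantial part of the work of the first half of the paper is spent; your proposal should cite or reprove it. Two smaller points: your remark that the diagonal term dominates when $\alpha>k$ is backwards --- for $\alpha>k$ the off-diagonal contribution $\asymp|E|\,\|E\|^{\alpha-k}$ dominates, though the conclusion $\big|\lfloor P A_n\rfloor\big|\gg\|C_n\|^{\min(k,\alpha)}$ you draw from Cauchy--Schwarz is nonetheless correct --- and no discretization of the direction space is actually needed, since the transversality estimate holds pointwise in $(z,z')$ over a fixed compact matrix set and the lattice-rounding is absorbed into the constants there.
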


Projections with a fixed range are parameterized by their null spaces; it is with respect to this parameterization and the rotation-invariant probability measure on the Grassmannian $G(d,d-k)$ that the statement of this theorem and the next is made.

Complementing Theorem \ref{thm:bettermarstrand} is an example showing that an increase in counting dimension is possible under the typical projection.

There is a similar Marstrand-type result for the mass dimension.

\begin{theorem}\label{thm:bettermarstrandformass}
Let $A \subseteq \R^d$ be such that $\cdim(A) = \mdim(A)$, and let $0 \leq k \leq d$. For almost every projection $P: \R^d \to \R^d$ with range $\R^k \times \{0\}^{d-k}$,
\[\mdim \big( PA \big) = \min \big(k, \mdim(A) \big)\]
and, if $A$ is counting and mass regular and $\mdim(A) \neq k$, then $\mmeas{\min (k, \mdim(A))}(P A) > 0$.
\end{theorem}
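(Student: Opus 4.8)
The plan is to establish the two bounds $\mdim(PA)\le\min(k,\mdim(A))$ and $\mdim(PA)\ge\min(k,\mdim(A))$ separately, each for almost every $P$, and to read off the positive-measure (regularity) statement from the lower-bound argument. Throughout I write $s=\cdim(A)=\mdim(A)$, parameterize the projections by their null spaces in the Grassmannian $G(d,d-k)$ with its $O(d)$-invariant measure, and use Corollary \ref{cor:qiecorone} to replace $A$ by $\lfloor A\rfloor$ together with Corollary \ref{cor:qiecortwo} to identify the range with $\R^k$ where convenient. Since the image always lies in a $k$-dimensional subspace, $\mdim(PA)\le k$ for every $P$; thus the real content of the upper bound is that $\mmeas{\alpha}(PA)=0$ for a.e.\ $P$ whenever $\alpha>s$ (only relevant when $s<k$), and the content of the lower bound is $\mmeas{\alpha}(PA)>0$ for a.e.\ $P$ whenever $\alpha<\min(k,s)$.

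For the lower bound the key observation is that the averaging mechanism underlying Theorem \ref{thm:bettermarstrand} can be rerun with \emph{centered} cubes, provided it is fed a subset that is simultaneously spread out (to control projection collisions) and concentrated on centered cubes (to supply the count). Fixing $\alpha<\min(k,s)$ and $\alpha<\alpha'<s$, the mass analogue of Lemma \ref{lem:basiccountingprops} furnishes centered cubes $\tilde C_n=[-\ell_n,\ell_n)^d$ with $|\lfloor A\rfloor\cap\tilde C_n|\ge\ell_n^{\alpha'}$; I thin each such intersection via Lemma \ref{lem:techlemredo} to a set $F_n$ with uniformly controlled sub-cube density at exponent $\alpha$ while retaining $\gtrsim\ell_n^{\alpha}$ points, which is possible precisely because $\alpha<\alpha'$. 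Because $P$ is linear, $\lfloor P\tilde C_n\rfloor$ is again a centered cube, and the Cauchy--Schwarz/collision step of Theorem \ref{thm:bettermarstrand} — whose Grassmannian integral is a Riesz-type energy $\sum_{a\ne a'}|a-a'|^{-k}$ that the density control keeps finite since $\alpha<k$ — shows that off a small exceptional set of $P$ the image $PF_n\subseteq PA$ has $\gtrsim\ell_n^{\alpha}$ points in that centered cube. Inheriting the almost-everywhere bookkeeping from Theorem \ref{thm:bettermarstrand} then yields $\mmeas{\alpha}(PA)>0$ a.e., and letting $\alpha\uparrow\min(k,s)$ gives the lower bound.

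For the upper bound I must show that for $\alpha>s$ the typical projection does not pile up $PA$ inside small centered range-cubes. The points of $\lfloor PA\rfloor$ in a range cube $C'$ of side $\ell'$ come from $\lfloor A\rfloor$ lying in the slab $P^{-1}(C')$; splitting these into dyadic annuli according to $|a|$, I bound each annulus by the uniform count $|\lfloor A\rfloor\cap C|\lesssim\|C\|^{\alpha}$, which holds for \emph{all} cubes $C$ — centered or not — because $\cdim(A)=s<\alpha$. Averaging the event $\{Pa\in C'\}$ over $G(d,d-k)$ contributes a factor $\approx(\ell'/|a|)^{k}$ per far point, and the resulting series over annuli converges and is small for typical $P$. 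This is exactly where the hypothesis $\cdim(A)=\mdim(A)$ is indispensable: it is the \emph{off-center} count bound that rules out mass-light but counting-heavy clusters, whose projections would otherwise create high small-scale density and force $\mdim(PA)=k>\min(k,s)$.

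Finally, when $A$ is counting and mass regular and $s\ne k$, the positive-measure statement is the lower-bound mechanism applied to $A$ itself: mass regularity supplies centered cubes carrying $\gtrsim\ell_n^{s}$ points (so no thinning is needed), while counting regularity supplies the uniform sub-cube bound at exponent $s$ that controls the collision energy, and the Cauchy--Schwarz step then gives $\mmeas{\min(k,s)}(PA)>0$ a.e. I expect the main obstacle to be the lower bound's demand for a \emph{single} set that is at once counting-regular and mass-dense on centered cubes — a combination Example \ref{ex:nosimultsubset} forbids at the critical exponent — which forces the thinning construction to operate strictly below $s$ and leans essentially on $\cdim(A)=\mdim(A)$; dually, the same hypothesis is what powers the off-center counting estimate in the upper bound, so both directions of the equality hinge on it.
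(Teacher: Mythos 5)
Your overall architecture --- an a.e.\ upper bound $\mdim(PA)\le\min(k,\mdim(A))$ proved by dyadic annuli and a collision-probability estimate, plus an a.e.\ lower bound proved by the Cauchy--Schwarz/energy mechanism of Theorem \ref{thm:bettermarstrand} run along centered cubes --- is the same as the paper's (Propositions \ref{prop:relatemassandcounting} and \ref{prop:upperboundondim}). But your lower bound has a genuine gap in the non-regular case. You propose to take centered cubes $\tilde C_n$ with $|\lfloor A\rfloor\cap\tilde C_n|\ge\ell_n^{\alpha'}$ and thin via Lemma \ref{lem:techlemredo} to an $\alpha$-counting bounded set $F_n$ ``retaining $\gtrsim\ell_n^{\alpha}$ points.'' Lemma \ref{lem:techlemredo} does not deliver that: it produces $F_n$ and a (generally non-centered) cube $C_{F_n}$ with $|F_n\cap C_{F_n}|\asymp\|C_{F_n}\|^{\alpha}$ and only $\|C_{F_n}\|\ge\sqrt[d]{S}/6\approx\ell_n^{(\alpha'-\alpha)/d}$, so $|F_n|$ can be as small as $\ell_n^{\alpha(\alpha'-\alpha)/d}\lll\ell_n^{\alpha}$. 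Since the mass measure of $PF_n$ must be evaluated on the \emph{centered} range cube of side $\asymp\ell_n$ containing $P\tilde C_n$, the ratio $|\lfloor PF_n\rfloor|/\ell_n^{\alpha}$ then tends to $0$ and you get nothing. The paper avoids thinning altogether: it feeds the energy argument the full set $A\cap C_n$, whose count is $\approx\ell_n^{\mdim(A)}$ because $A$ achieves its \emph{mass} dimension along $(C_n)_n$, and obtains the needed $(K,\alpha)$-\emph{counting} bound for free at any exponent $\alpha>\cdim(A)$ (since $\cmeas{\alpha}(A)=0$). The collision loss is then $\|A_n\|^{\max(0,\alpha-k)}$, yielding $\mdim(P_MA)\ge\min\bigl(\mdim(A),\,\mdim(A)-\alpha+k\bigr)$, and letting $\alpha\downarrow\cdim(A)$ gives $\min(k,\mdim(A))$ exactly when $\cdim(A)=\mdim(A)$. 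That is the true role of the hypothesis, and it is why no ``simultaneously regular'' subset (which Example \ref{ex:nosimultsubset} rules out) is ever needed. Your final paragraph --- the regular case, where mass regularity gives the count and counting regularity gives the collision bound at the same exponent --- is correct and is exactly the paper's argument for the positive-measure statement.

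You have also misplaced the hypothesis in the upper bound. Proposition \ref{prop:upperboundondim} holds for \emph{all} $A\subseteq\R^d$ with no assumption relating $\cdim$ and $\mdim$: the dyadic annuli are centered at the origin, so the count $|E\cap S_i'|\le K\|I_i'\|^{\alpha}$ needs only the mass bound $\alpha>\mdim(A)$ on centered cubes, not the off-center counting bound $\alpha>\cdim(A)$ you invoke. Your version still proves the upper bound under the theorem's hypotheses (the counting bound implies the mass bound), but the claim that $\cdim(A)=\mdim(A)$ is ``indispensable'' there is backwards: the example following Proposition \ref{prop:upperboundondim} shows that without this hypothesis it is the \emph{lower} bound that fails (a decrease in mass dimension occurs), never the upper one.
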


We derive two more Marstrand-type results as corollaries to these theorems. The first shows that the theorems above hold when the subspace $\R^k \times \{0\}^{d-k}$ is replaced by any other linear subspace. The second is an orthogonal projection variant reminiscent of Marstrand's original theorem and its higher dimensional generalization.

\begin{cor}\label{cor:orthogmarstrandformass}
Let $A \subseteq \R^d$ be such that $\cdim(A) = \mdim(A)$, and let $0 \leq k \leq d$. For almost every orthogonal projection $U$ with range of dimension $k$,
\[\mdim(UA) = \min \big(k, \mdim(A) \big)\]
and, if $A$ is counting and mass regular and $\mdim(A) \neq k$, then $\mmeas{\min(k,\mdim(A))}(U A) > 0$.
\end{cor}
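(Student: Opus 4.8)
The plan is to deduce the corollary from Theorem \ref{thm:bettermarstrandformass} by exhibiting each orthogonal projection onto a $k$-dimensional subspace as an oblique projection onto the fixed subspace $W=\R^k\times\{0\}^{d-k}$, composed with invertible linear maps, and then transporting the almost-everywhere statement across the orthogonal-complement map between the two Grassmannians. Fix $W$, and for $V\in G(d,k)$ let $U_V$ denote the orthogonal projection onto $V$ and choose any $R_V\in O(d)$ with $R_VV=W$. For every $V$ outside the measure-zero set where $W\cap V^\perp\neq\{0\}$, we have the direct sum decomposition $\R^d=W\oplus V^\perp$, and I claim
\[
R_V U_V = T_V\circ P_{V^\perp},
\]
where $P_{V^\perp}$ is the oblique projection onto $W$ with null space $V^\perp$ and $T_V:=(R_VU_V)|_W\colon W\to W$ is invertible. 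Indeed, writing $x=w+n$ with $w\in W$, $n\in V^\perp=\ker U_V$ gives $R_VU_V x=R_VU_V w=T_V w=T_V P_{V^\perp}x$, and $T_V$ is the composite of the isomorphisms $U_V|_W\colon W\to V$ and $R_V|_V\colon V\to W$ (the former being an isomorphism precisely because $W\cap V^\perp=\{0\}$).

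Next I transfer dimension and measure. Since $R_V$ is an orthogonal transformation of $\R^d$ and $T_V$ is an invertible linear self-map of $W\cong\R^k$, two applications of Corollary \ref{cor:qiecortwo}, with $\beta=\min\big(k,\mdim(A)\big)$, yield
\[
\mmeas{\beta}\big(U_V A\big)\asymp \mmeas{\beta}\big((R_V U_V)A\big)=\mmeas{\beta}\big(T_V(P_{V^\perp}A)\big)\asymp \mmeas{\beta}\big(P_{V^\perp}A\big),
\]
and, in the same way, $\mdim(U_VA)=\mdim(P_{V^\perp}A)$. The essential point is that these identities hold pointwise in $V$, and the quantities involved are intrinsic to $U_V$ and to $P_{V^\perp}$; in particular the value is independent of the auxiliary choice of $R_V$, so no measurable selection $V\mapsto R_V$ is required.

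It remains to match the two invariant measures. The orthogonal-complement map $\Phi\colon V\mapsto V^\perp$ is an $O(d)$-equivariant bijection $G(d,k)\to G(d,d-k)$, so it carries the unique $O(d)$-invariant probability measure on $G(d,k)$ to the unique one on $G(d,d-k)$, which is exactly the measure with respect to which Theorem \ref{thm:bettermarstrandformass} is stated. Let $\goo\subseteq G(d,d-k)$ be the full-measure set on which the conclusions of Theorem \ref{thm:bettermarstrandformass} hold for the oblique projection with null space the given subspace. Then $\Phi^{-1}(\goo)$, intersected with the full-measure set $\{V:W\cap V^\perp=\{0\}\}$, has full measure in $G(d,k)$, and for every $V$ in it the displayed equalities convert the conclusions for $P_{V^\perp}$ into $\mdim(U_VA)=\min\big(k,\mdim(A)\big)$ and, under the regularity hypotheses with $\mdim(A)\neq k$, into $\mmeas{\min(k,\mdim(A))}(U_VA)>0$.

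The only genuine content is the factorization $R_VU_V=T_V\circ P_{V^\perp}$ combined with the linear invariance of the mass measures and dimension (Corollary \ref{cor:qiecortwo}); the rest is bookkeeping across the two parameterizations. I expect the main thing to watch is the measure identification under $\Phi$ together with the careful discarding of the measure-zero exceptional sets (both the subspaces $V^\perp$ failing to be transverse to $W$ and the preimages of the complement of $\goo$), and the routine check that $T_V$ is invertible exactly on the transversality locus where the argument is applied.
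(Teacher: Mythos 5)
Your proof is correct and takes essentially the same route as the paper: both reduce to Theorem \ref{thm:bettermarstrandformass} by identifying the orthogonal projection with range $V$ with the oblique projection onto $\R^k\times\{0\}^{d-k}$ having null space $V^\perp$, transporting the dimension statements across this identification via the linear invariance of the mass measures (Corollary \ref{cor:qiecortwo}), and using that $V\mapsto V^\perp$ is measure preserving between the Grassmannians. The only cosmetic difference is that the paper expresses the key relation as $P\circ U=P$ together with the invertibility of $P$ restricted to the range of $U$, whereas you package the same fact as the factorization $R_VU_V=T_V\circ P_{V^\perp}$.
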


This section is organized as follows. First we describe the spaces of projections and relevant measures on them. Then, we prove Theorem \ref{thm:bettermarstrand} and provide an example showing that an increase in counting dimension is possible under the typical projection. After proving Theorem \ref{thm:bettermarstrandformass}, we develop the measurability and non-singularity lemmas needed to deduce further Marstrand-type results. Finally, we present some applications.

Recall the asymptotic notation from Section \ref{sec:invunderqie}: $f \ll_{a,b,\ldots} g$ or $g \gg_{a,b,\ldots} f$ if $f(x) \leq K g(x)$ for all $x$ in the common domain of $f$ and $g$ (unless another domain is specified), where $K > 0$ may depend only on the quantities $a, b, \ldots$.

\subsection{Spaces of projections}\label{sec:bettermeasuresonspaces}

Let $\linmaps(\R^d)$ denote the space of linear transformations of $\R^d$. Denote by $\matspace mn = \matspace mn (\R)$ the space of real $m \times n$ matrices. The \emph{standard basis} of $\R^d$ is $\{e_i\}_{i=1}^d$ where $e_i$ is the vector with a 1 in the $i$-th coordinate and 0's elsewhere. Given a basis $B = \{b_i\}_{i=1}^d$ of $\R^d$ and a linear transformation $T \in \linmaps(\R^d)$, denote by $\matmap TB$ the matrix of $T$ with respect to $B$. The matrix of $T$ in the standard basis will be denoted $\matmape T$.


A \emph{projection} of $\R^d$ is a map $P \in \linmaps(\R^d)$ satisfying $P^2 = P$; it is \emph{orthogonal} if its range and null space are orthogonal, and otherwise it is \emph{oblique}. For a fixed $0 \leq k \leq d$ and linear subspace $R$ of $\R^d$, set
\begin{align*}
\proj dR &= \{ P \in \linmaps(\R^d) \ | \ P \text{ is a projection with } \rangespace P = R\},\\
\oproj dk &= \{ U \in \linmaps(\R^d) \ | \ U \text{ is an orthogonal projection with } \dim (\imagespace U) = k \}.
\end{align*}
We specialize the notation to $\proj dR = \proj dk$ when $R = \R^k \times \{0\}^{d-k}$. Both $\proj dR$ and $\oproj dk$ are measurable spaces with Borel $\sigma$-algebras inherited as subspaces of $\linmaps(\R^d)$ equipped with the strong operator norm induced from the usual Euclidean inner product on $\R^d$.

We will parameterize projections in $\proj dR$ and $\oproj dk$ via the Grassmannian manifolds. Denote by $G(d,k)$ the set of all $k$-dimensional linear subspaces of $\R^d$. The orthogonal group $O(d)$ acts naturally on $G(d,k)$, and through this action $G(d,k)$ may be equipped with a (unique) $O(d)$-invariant probability measure $\gamma_{d,k}$ (see Mattila \cite{mattilabook}, Chapter 3).

To parameterize the set of projections with a fixed range $R \subseteq \R^d$, set $k = \dim R$ and associate to $P \in \proj dR$ its null space $\nullspace P \in G(d,d- k)$. Let
\[G_R(d,d-k) = \big\{V \in G(d,d-k) \ | \ V \cap R = \{0\}\big\}.\]
It is a fact that $G_R(d,d-k)$ is an open subset of $G(d,d-k)$ of full $\gamma_{d,d-k}$ measure (see Mattila \cite{mattilabook}, Chapter 3). The space $\proj dR$ is in 1--1 correspondence with $G_R(d,d-k)$ and acquires a Borel probability measure $\projm dR$ from $G_R(d,d-k)$ through this correspondence. It is the space $\proj dk$ which appears in the statement of Theorems \ref{thm:bettermarstrand} and \ref{thm:bettermarstrandformass}.

The space $\oproj dk$ of orthogonal projections with range of dimension $k$ is in 1--1 correspondence with $G(d,k)$ by associating $U$ with $\rangespace U$. The space $\oproj dk$ acquires a Borel probability measure $\oprojm dk$ through this correspondence. It is this space which appears in the statement of Corollary \ref{cor:orthogmarstrandformass}.\\

It will be convenient to give a flat parameterization to $\proj dR$. To do this, we will make use of the standard atlas on $G(d,d-k)$ which gives $G(d,d-k)$ the structure of a $\big(k(d-k)\big)$-dimensional smooth manifold (see Lee \cite{leebook}). The measures on $\proj dR$ resulting from this flat parameterization will be equivalent to $\projm dR$, and so it suffices for the typicality results in question to work with these measures.

Any element $W \in G_R(d,d-k)$ is naturally identified as the graph of a linear transformation $R^\perp \to R$. Choosing bases for $R^\perp$ and $R$, there is a 1--1 correspondence between $G_R(d,d-k)$ and $\matspace k{(d-k)}$; these are precisely the charts on $G(d,d-k)$ giving it the structure of a smooth manifold. The Lebesgue measure passes to $\proj dR$ through this correspondence, and it is straightforward to show that the measures on $G_R(d,d-k)$ resulting from different choices of bases for $R^\perp$ and $R$ are mutually absolutely continuous.

Moreover, these measures are equivalent to $\gamma_{d,d-k}$ on $G_R(d,d-k)$. This follows from two facts. First, the measure $\gamma_{d,d-k}$ on $G(d,d-k)$ belongs to the smooth measure class of $G(d,d-k)$; for example, it is equivalent to the Riemannian volume of any $O(d)$-invariant Riemannian metric on $G(d,d-k)$. Second, it is a general fact that on an $n$-dimensional smooth manifold, measures from the smooth measure class are equivalent to the pull-backs of the Lebesgue measure on $\R^n$ under charts on the manifold (see, for example, Folland \cite{follandbook}, Section 11.4).

The resulting flat parameterizations of $\proj dR$ may be realized easily as follows. For $1 \leq k \leq d-1$, let $B = \{b_i\}_{i=1}^d$ be a basis for $\R^d$ whose first $k$ vectors span $R$.  Matrices of projections $P \in \proj dR$ with respect to $B$ take the form
\begin{center} $\matmap PB = $
$\left( \begin{tabular}{c|c}
$\text{Id}_{k\times k}$ & $\restmatmap PB$ \\
\hline
0 & 0
\end{tabular} \right)$
\end{center}
where $\restmatmap PB$ is a real $k \times (d-k)$ matrix, and each real $k \times (d-k)$ matrix in the form above gives a different projection in $\proj dR$; in other words, the map $\purerestmatmap B: \proj dR \rightarrow \matspace k{(d-k)}$ is a bijection. We have the following associations:

\vskip .2cm
\begin{center}\begin{tabular}{ccccccc}
$\proj dR$ & $\longleftrightarrow$ & $G_R(d,d-k)$ & $\longleftrightarrow$ & $\mathcal{L}(R^\perp,R)$ & $\longleftrightarrow$ & $\matspace k{(d-k)}$\\
$P$ & $\longleftrightarrow$ & $\nullspace P$ & $\longleftrightarrow$ & $\restr{P}{R^\perp}$ & $\longleftrightarrow$ & $\restmatmap PB$
\end{tabular}\end{center}
\vskip .05cm

It follows from the remarks above that the pull-backs of the Lebesgue measure through $\purerestmatmap B$ for varying bases $B$ and the measure $\projm dR$ on $\proj dR$ are all equivalent. We will use the most convenient of these measures when showing that maps between the spaces $\proj dR$ are non-singular in Section \ref{sec:furtherresults}.

\subsection{Transversality of oblique projections}\label{sec:keygeometriclemma}

Here we prove a key lemma on the geometry of oblique projections. Essentially, the further two points $z,z' \in \Z^d$ are apart, the smaller the measure of the set of projections in $\proj dk$ which map the points close together (see Lemma 3.11 in Mattila \cite{mattilabook}, Chapter 3, for the result for orthogonal projections).  This so-called property of \emph{transversality} is fundamental to Marstrand-type results (see \cite{peres}). 

Fix $d \geq 2$ and $1 \leq k \leq d-1$. For $M \in \matspace k{(d-k)}$, let $\pmat \in \proj dk$ be the projection which satisfies $\restmatmape {\pmat} = M$ (using notation from Section \ref{sec:bettermeasuresonspaces}). Write $\R^d = \R^k \oplus \R^{d-k}$, and let $\pone$ and $\ptwo$ be the orthogonal projections onto the first and second components, respectively; note that
\begin{align}\label{eqn:explicitformulaforproj}\pone \circ \pmat = \pone + M \circ \ptwo.\end{align}

Let $\compact \subseteq \matspace k{(d-k)}$ be compact, and for $z, z' \in \Z^d$, let
\[\compact_{z,z'} = \big\{ M \in \compact \ \big| \ \lfloor \pmat z \rfloor = \lfloor \pmat z' \rfloor \big\}\]
be the set of those $M \in \compact$ for which the projection $\pmat$ maps the points $z$ and $z'$ to the same $\Z^d$-based unit cube.

\begin{lemma}\label{lem:bestgeometry} 
For all $z, z' \in \Z^d$ with $z \neq z'$,
\[m \big(\compact_{z,z'} \big) \ll_{k,d,\compact} |z - z'|^{-k}.\]
\end{lemma}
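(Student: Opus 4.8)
The plan is to translate the condition defining $\compact_{z,z'}$ into a linear constraint on the rows of $M$ and then estimate a product of slab volumes. First I would use that $\rangespace{\pmat} = \R^k \times \{0\}^{d-k}$, so that both $\pmat z$ and $\pmat z'$ have vanishing last $d-k$ coordinates; hence $\lfloor \pmat z \rfloor = \lfloor \pmat z' \rfloor$ is equivalent to $\lfloor \pone \pmat z \rfloor = \lfloor \pone \pmat z' \rfloor$ in $\Z^k$. Writing $u = \pone(z-z') \in \Z^k$ and $v = \ptwo(z-z') \in \Z^{d-k}$ and applying \eqref{eqn:explicitformulaforproj}, one finds $\pone \pmat z - \pone \pmat z' = u + Mv$, so the condition $M \in \compact_{z,z'}$ forces each coordinate of $u + Mv$ to have absolute value strictly less than $1$.

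The crucial feature to exploit is that the $i$-th coordinate of $u + Mv$ equals $u_i + \langle M_i, v\rangle$, where $M_i \in \R^{d-k}$ is the $i$-th row of $M$. Thus the constraint decouples across the $k$ rows: each row $M_i$ must satisfy $\langle M_i, v\rangle \in I_i$ for some interval $I_i$ of length $2$. Identifying $\matspace{k}{(d-k)}$ with $(\R^{d-k})^k$ so that $m$ is the product of the row-wise Lebesgue measures, and enclosing the (compact) projection of $\compact$ onto each row in a fixed ball $B$ whose radius depends only on $\compact$, I obtain $\compact_{z,z'} \subseteq (S_1 \cap B) \times \cdots \times (S_k \cap B)$, where each $S_i = \{w : \langle w, v\rangle \in I_i\}$ is a slab of thickness $2/|v|$ perpendicular to $v$. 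A routine slab-times-ball volume estimate gives $m_{d-k}(S_i \cap B) \ll_{k,d,\compact} |v|^{-1}$ for every $i$, and multiplying over the $k$ rows yields $m(\compact_{z,z'}) \ll_{k,d,\compact} |v|^{-k}$.

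The main obstacle is that this bounds the measure by $|v|^{-k}$ rather than by $|z-z'|^{-k}$, and since $|v| \le |z-z'|$ this is \emph{a priori} the wrong direction. The resolution is to exploit nonemptiness together with the compactness of $\compact$. I would first dispose of the degenerate case $v = 0$ separately: there $u + Mv = u$ is independent of $M$, so $\compact_{z,z'}$ is either empty (measure $0$) or all of $\compact$, and in the latter case $1 \le |z-z'| = |u| < \sqrt k$ makes $|z-z'|^{-k}$ bounded below by a constant, whence $m(\compact) \ll_{k,\compact} |z-z'|^{-k}$ trivially. When $v \neq 0$ and $\compact_{z,z'} \neq \emptyset$, picking any $M$ in it and using the uniform bound $\|M\| \le K_0(\compact)$ on the compact set $\compact$ gives $|u| \le \sqrt k + K_0 |v|$; since $v \in \Z^{d-k}\setminus\{0\}$ forces $|v| \ge 1$, this yields $|z-z'| = \sqrt{|u|^2 + |v|^2} \ll_{k,\compact} |v|$, hence $|v|^{-k} \ll_{k,\compact} |z-z'|^{-k}$. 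Combining the two cases completes the estimate. I expect the slab volume bound and the row decoupling to be routine; the only genuine care is needed in the comparison of $|v|$ with $|z-z'|$ just described, which is precisely where the \emph{transversality} of the family $\{\pmat\}$ enters.
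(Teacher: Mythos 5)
Your proof is correct and follows essentially the same route as the paper's: both reduce the defining condition to $M\ptwo(z-z') = Mv$ lying in a set of diameter $O(1)$, bound the measure of the corresponding set of matrices by $|v|^{-k}$ (you via row-wise slab-volume estimates, the paper by rotating and scaling the relevant column of $M$), and then convert $|v|^{-k}$ into $|z-z'|^{-k}$ using nonemptiness of $\compact_{z,z'}$, the uniform bound on $\|M\|$ over the compact set $\compact$, and the integrality bound $|v|\ge 1$. Your separate treatment of the degenerate case $v=0$ corresponds to the paper's initial reduction to $|z-z'|\ge\sqrt{k}$ with $\compact_{z,z'}\neq\emptyset$, so the two arguments differ only cosmetically.
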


\begin{proof}
Since the left hand side is bounded from above by $m(\compact)$ and the right hand side is positive and monotonic, it suffices to prove the lemma for $z,z' \in \Z^d$ with $|z-z'| \geq \sqrt{k}$ and $\compact_{z,z'} \neq \emptyset$.

It suffices to bound from above the measure of the set
\begin{align}\label{eqn:targetbound}S = \left\{ M \in \compact \ \big| \ \big| \pmat(z - z') \big| < \sqrt{k} \right\} \supseteq \compact_{z,z'}.\end{align}
Using (\ref{eqn:explicitformulaforproj}),
\[S = \big\{ M \in \compact \ \big| \ M \ptwo (z-z') \in B \big\},\]
where $B \subseteq \R^k$ is the open ball centered at $\pone (z'-z)$ with radius $\sqrt{k}$. Since $\compact_{z,z'}$ is non-empty, the set $S$ is non-empty. If it were that $\ptwo (z-z') = 0$, then $B$ would be an open ball containing 0 with radius $\sqrt{k}$ and center a distance $|\pone (z' -z) | = |z' - z| \geq \sqrt{k}$ from the origin, a contradiction; therefore, $\ptwo (z-z') \neq 0$.  By rotating and scaling $\ptwo (z-z')$ to $e_1 \in \R^{d-k}$, it is straightforward to show that
\begin{align}\label{eqn:easyexercise}m \big( S \big) \leq \diam \left( \compact \right)^{k(d-k-1)} \vol_k \left( \frac{B}{| \ptwo (z-z') |} \right) \ll_{k,d,\compact} \big| \ptwo (z-z') \big|^{-k},\end{align}
where $\frac{B}{| \ptwo (z-z') |}$ is the ball centered at $\frac{\pone (z'-z)}{| \ptwo (z-z') |}$ with radius $\frac{\sqrt{k}}{| \ptwo (z-z') |}$, $\diam(\compact)$ is the diameter of $\compact$ in the Euclidean metric, and $\vol_k$ is the $k$-dimensional Lebesgue measure.

The final step is to show that
\begin{align}\label{eqn:controlonmag}|z-z' | \ll_{k,d,\compact} \big| \ptwo (z-z') \big|.\end{align}
Let $M \in \compact_{z,z'}$. Since $\big| \pmat(z - z') \big| < \sqrt{k}$, we have
\[ \Big| \big| \pone (z - z') \big| - \big| M \ptwo (z-z') \big| \Big | \leq \big| \pmat(z - z') \big| < \sqrt{k}.\]
Because $\ptwo (z - z') \neq 0$ and $z,z' \in \Z^d$, we have $\big|\ptwo (z-z')\big| \geq 1$. It follows that
\[\big| \pone (z-z') \big| < \sqrt{k} + \big\| M \big\| \big|\ptwo (z - z') \big| \leq \big|\ptwo (z - z') \big| \left ( \sqrt{k} + \sup_{M \in \compact} \| M \| \right),\]
where $\|M\|$ denotes the operator norm of $M$. This shows that $\big|\pone (z-z') \big| \ll_{k,\compact} \big| \ptwo (z-z') \big|$, and (\ref{eqn:controlonmag}) follows from the Pythagorean theorem. The proof of the lemma is completed by combining (\ref{eqn:targetbound}), (\ref{eqn:easyexercise}), and (\ref{eqn:controlonmag}).
\end{proof}

\subsection{A Marstrand-type theorem for the counting dimension}\label{sec:betterspecialcase}

Here we prove the main Marstrand-type theorem for the counting dimension, Theorem C from the introduction. The proof follows the main ideas of Lima and Moreira in \cite{lima}, which constitutes the special case of Theorem \ref{thm:bettermarstrand} when $d=2$ and $k=1$ (see Theorem B from the introduction).

\begin{reptheorem}{thm:bettermarstrand}
Let $A \subseteq \R^d$ and $0 \leq k \leq d$. If $\alpha \neq k$ is such that $\cmeas \alpha (A) > 0$, then for almost every $P \in \proj dk$, $\cmeas {\min(k,\alpha)} (PA) > 0$. In particular, for almost every $P \in \proj dk$,
\[\cdim \big( PA \big) \geq \min \big(k, \cdim(A) \big)\]
and, if $\cdim(A) > k$, then $\cmeas k (PA) > 0$.
\end{reptheorem}

Fix $d \geq 2$. The conclusions of the theorem are immediate when $k = 0$ or $d$, so fix $1 \leq k \leq d-1$. It suffices to prove the first assertion in the statement of Theorem \ref{thm:bettermarstrand} since the second follows immediately from the first. Thus, we will assume in this section that $\alpha \neq k$.

By the discussion in Section \ref{sec:bettermeasuresonspaces}, it suffices to prove the statement in Theorem \ref{thm:bettermarstrand} with $P = \pmat$ (in the notation from Section \ref{sec:keygeometriclemma}) for $m$-almost every $M \in \matspace k{(d-k)}$, where $m$ is the Lebesgue measure on $\matspace k{(d-k)}$. By countable additivity, it suffices to prove the theorem for $m$-a.e. matrix in a fixed compact subset of $\matspace k{(d-k)}$. Moreover, it suffices by Corollary \ref{cor:qiecorone} and the remarks following it to prove the theorem for subsets of the integer lattice $\Z^d$. Thus, fix $A \subseteq \Z^d$ and $\compact \subseteq \matspace k{(d-k)}$ compact. In what follows, the dependence on $k$, $d$, and $\compact$ in the asymptotic notation will be suppressed.

The idea for the proof is as follows.  By Lemma \ref{lem:bestgeometry}, the further two points in $\Z^d$ are apart, the fewer number of projections there are that map the points close together. If a finite set is not too concentrated or is ``thin,'' the number of its elements should be roughly preserved (depending on how thin it is) by most projections. A subset $A \subseteq \Z^d$ of dimension $\alpha$ contains finite sets which are uniformly thin to a degree depending on $\alpha$. Most projections of $A$ will preserve the size of most of these finite subsets, and so the typical projection of $A$ will preserve the dimension of $A$.

To quantify thin-ness, we make the following definition; there are clear analogues in the continuous theory in terms of bounding the growth of measures on balls by powers of the balls' radii.

\begin{definition}\label{def:countingthin}
Let $\alpha \geq 0$ and $0 < K < \infty$.  A (possibly finite) set $E \subseteq \Z^d$ is $(K,\alpha)$\emph{-counting bounded} if for all cubes $C \subseteq \R^d$ with $\|C\| \geq 1$, $|E \cap C| \leq K \|C\|^\alpha$.
\end{definition}

\begin{remark}
Whenever a set is $(K,\alpha)$-counting bounded, it will be understood that $\alpha \geq 0$ and $0 < K < \infty$. Note that $\cmeas\alpha(A) < \infty$ if and only if $A$ is $(K,\alpha)$-counting bounded for some $K$.
\end{remark}

The goal is to show that many projections $\pmat$ roughly preserve the number of elements of a counting bounded finite set $E \subseteq \Z^d$; that is, we want to bound $\big| \lfloor \pmat E \rfloor \big|$ from below for most $M \in \compact$ (depending on the degree to which $E$ is counting bounded). In order to do this, set
\begin{align*}
\compact_{z,z'} &= \big\{ M \in \compact \ \big| \ \lfloor \pmat z \rfloor = \lfloor \pmat z' \rfloor \big\},\\
\repnum E{M}y &= \left| \big\{ z \in E \ \big| \ \lfloor \pmat z \rfloor = y \big\} \right|,\\
\repnumsum EM &= \left| \big\{ (z,z') \in E^2 \ \big| \ \lfloor \pmat z \rfloor = \lfloor \pmat z' \rfloor \big\} \right|\\
&= \sum_{y \in \Z^{d}} \repnum E{M} y^2 = \sum_{z,z' \in E} \chi_{\compact_{z,z'}}(M),\\
\Delta(E) & = \int_\compact \repnumsum EM \ dm(M) = \sum_{z,z' \in E} m \left(\compact_{z,z'} \right),
\end{align*}
where $\chi_{\compact_{z,z'}}$ is the indicator function of the set $\compact_{z,z'}$. Bounding $\Delta(E)$ from above will allow us to bound $\repnumsum EM$ from above for most $M \in \compact$, which in turn by the Cauchy-Schwarz inequality will yield a lower bound on $\big| \lfloor \pmat E \rfloor \big|$. (If $E  = E_1 \times E_2 \subseteq \Z^2$, then set $S_E(M)$ is the so-called additive energy between $E_1$ and $M E_2$. Bounding the additive energy from above in order to obtain a lower bound on the size of a sumset via Cauchy-Schwarz is a well known technique; see, for example, Tao-Vu \cite{taovu}, Corollary 2.10.)

\begin{definition}
For a (metrically) bounded subset $B \subseteq \R^d$, denote by $\|B\|$ the \emph{cubic diameter} of $B$, that is, the infimum of $\|C\|$ over all cubes $C$ containing $B$.
\end{definition}

The following lemma gives the desired upper bound on $\Delta(E)$ from the transversality property in Lemma \ref{lem:bestgeometry}.

\begin{lemma}\label{lem:bettercontrolonpairs}
For all $\alpha \neq k$ and all finite $(K,\alpha)$-counting bounded sets $E \subseteq \Z^d$,
\[\Delta(E) \ll_{K,\alpha} |E|\|E\|^{\max(0,\alpha-k)}.\]
\end{lemma}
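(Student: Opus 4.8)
The plan is to split the sum $\Delta(E) = \sum_{z,z' \in E} m(\compact_{z,z'})$ into its diagonal and off-diagonal parts and estimate each separately. The diagonal terms are immediate: when $z = z'$ we have $\compact_{z,z} = \compact$, so these contribute $|E| \cdot m(\compact) \ll |E|$, which (assuming, as we may, that $\|E\| \geq 1$) is absorbed into the claimed bound. The substance lies in the off-diagonal terms, where I would invoke the transversality estimate of Lemma \ref{lem:bestgeometry}, namely $m(\compact_{z,z'}) \ll |z - z'|^{-k}$, reducing the task to showing
\[\sum_{\substack{z,z' \in E \\ z \neq z'}} |z-z'|^{-k} \ll_{K,\alpha} |E| \, \|E\|^{\max(0,\alpha-k)}.\]

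First I would fix $z \in E$ and bound the inner sum $\sum_{z' \neq z} |z-z'|^{-k}$ by grouping the points $z'$ into dyadic annuli $\big\{ z' \mid 2^{j-1} \leq |z-z'| < 2^j \big\}$. On such an annulus the summand is $\ll 2^{-jk}$, while the $(K,\alpha)$-counting boundedness of $E$ controls the number of lattice points: the annulus sits inside a cube of side length $\ll 2^j$ centered at $z$, so it contains at most $K (2^{j+1})^\alpha \ll_{K,\alpha} 2^{j\alpha}$ points of $E$. The $j$-th annulus therefore contributes $\ll_{K,\alpha} 2^{j(\alpha-k)}$. Since $E$ lies in a cube of side length $\|E\|$, every nonzero distance $|z-z'|$ is at most $\sqrt{d}\,\|E\|$, so $j$ ranges over $1 \leq j \ll \log_2 \|E\|$.

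The key step is then the geometric sum $\sum_{j} 2^{j(\alpha-k)}$ over this range, and this is exactly where the hypothesis $\alpha \neq k$ is used: the common ratio $2^{\alpha-k} \neq 1$, so the series is dominated by a single endpoint rather than degenerating into $\sim \log \|E\|$ equal terms. When $\alpha > k$ the sum is $\ll_\alpha 2^{(\alpha-k)\log_2\|E\|} = \|E\|^{\alpha-k}$, governed by the largest scale; when $\alpha < k$ it is $\ll_\alpha 1$, governed by the smallest scale. In either case the inner sum is $\ll_{K,\alpha} \|E\|^{\max(0,\alpha-k)}$, with the implied constant absorbing the factor $|1 - 2^{\alpha-k}|^{-1}$. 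Summing this uniform estimate over the $|E|$ choices of $z$ yields the off-diagonal bound, and combining with the diagonal gives the lemma.

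The only mild obstacle is the bookkeeping of the geometric series near the excluded value: the constant $|1 - 2^{\alpha-k}|^{-1}$ blows up as $\alpha \to k$, which is harmless here since the implied constant may depend on $\alpha$, but it makes transparent why the borderline case $\alpha = k$ must be set aside. Everything else is routine dyadic counting against the counting-bounded hypothesis.
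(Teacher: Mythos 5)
Your proof is correct and follows essentially the same route as the paper's: both split $\Delta(E)$ into the diagonal part (bounded by $|E|\,m(\compact) \ll |E|$) and the off-diagonal part, which is estimated by grouping $z'$ into multiplicative annuli around each fixed $z$, counting the points in each annulus via the $(K,\alpha)$-counting bound, applying the transversality estimate of Lemma \ref{lem:bestgeometry} on each annulus, and summing the resulting geometric series $\sum_j 2^{j(\alpha-k)}$, which is where $\alpha \neq k$ enters. The only cosmetic difference is that the paper uses shells of ratio $e$ rather than dyadic ones.
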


\begin{proof}
Let $E \subseteq \Z^d$ be a finite, $(K,\alpha)$-counting bounded set and $L = \lfloor \log \diam(E) \rfloor$. Write
\[\Delta(E) = \sum_{z,z' \in E} m \left(\compact_{z,z'} \right) = |E|m \left( \compact \right) + \sum_{z \in E} \sum_{\ell = 0}^{L} \sum_{\substack{z' \in E \\ e^\ell \leq |z-z'| < e^{\ell + 1}}} m \left(\compact_{z,z'} \right),\]
where the sum is split into the diagonal and off-diagonal terms.  Since $|E|m \left( \compact \right) \ll |E|$, it suffices to show the desired bound on the off-diagonal term.  Fix $z \in E$ and $\ell \geq 0$. Since $E$ is $(K,\alpha)$-counting bounded,
\[\big| \left\{ z' \in E \ \middle| \ e^\ell \leq |z-z'| < e^{\ell + 1} \right\} \big| \ll_{K,\alpha} \left( e^\ell \right)^\alpha,\]
and for each such $z'$, since $|z-z'| \geq e^\ell$, Lemma \ref{lem:bestgeometry} gives that $m(\compact_{z,z'}) \ll \left(e^\ell \right)^{-k}$. Therefore, summing on $\ell$ and using the fact that $\alpha \neq k$,
\[\sum_{\ell = 0}^{L} \sum_{\substack{z' \in E \\ e^\ell \leq |z-z'| < e^{\ell + 1}}} m \left(\compact_{z,z'} \right) \ll_{K,\alpha} \sum_{\ell = 0}^{L} \left( e^{\alpha - k} \right)^\ell \ll_\alpha \|E\|^{\max(0,\alpha-k)}.\]
Since this bound is independent of $z$, the result follows by summing over $z \in E$.
\end{proof}

The following proposition quantifies the claim that \emph{most projections preserve the cardinality of a counting bounded finite set} by establishing the equivalent statement \emph{few projections do not preserve the cardinality of a counting bounded finite set}.

In interpreting the following result, consider $\|E\|^\alpha$ as being approximately $|E|$. If $\alpha < k$, then the measure of the set of projections which reduce $E$ to $\delta |E|$ elements (after rounding) is proportional to $\delta$. If $\alpha > k$, then the (rounded) image of $E$ must contain fewer elements since it lies in $\Z^{k}$; indeed, the measure of the set of projections which reduce $E$ to $\delta |E|^{\frac{k}{\alpha}}$ elements is proportional to $\delta$.

\begin{prop}\label{prop:bettersmallprojs}
For all $\alpha \neq k$, all finite $(K,\alpha)$-counting bounded sets $E \subseteq \Z^d$, and all $\delta >0$,
\[m \left( \left\{ M \in \compact \ \middle| \ \big| \lfloor \pmat E \rfloor \big | < \delta \frac{|E|}{\|E\|^{\max(0,\alpha-k)}} \right\} \right) \ll_{K,\alpha} \delta.\]
\end{prop}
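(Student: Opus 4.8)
The plan is to convert the average bound on $\repnumsum EM$ supplied by Lemma~\ref{lem:bettercontrolonpairs} into a measure estimate via Markov's inequality, using the Cauchy--Schwarz inequality as the bridge between the size of the image $\big|\lfloor \pmat E \rfloor\big|$ and the pair-count $\repnumsum EM$. The point is that a projection can only collapse $E$ to a small image if it creates many collisions, i.e.\ makes $\repnumsum EM$ large, and few projections do this because $\repnumsum EM$ is small on average.

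First I would record a pointwise lower bound valid for \emph{every} $M \in \compact$. Since each $z \in E$ lands in exactly one $\Z^d$-based unit cube, we have $\sum_{y} \repnum EMy = |E|$, where the sum ranges over $y \in \lfloor \pmat E \rfloor$. Cauchy--Schwarz over this (finite) support then gives
\[|E|^2 = \Big( \sum_{y \in \lfloor \pmat E \rfloor} \repnum EMy \Big)^2 \leq \big| \lfloor \pmat E \rfloor \big| \sum_{y} \repnum EMy^2 = \big| \lfloor \pmat E \rfloor \big| \, \repnumsum EM,\]
so that $\big| \lfloor \pmat E \rfloor \big| \geq |E|^2 / \repnumsum EM$ for every $M \in \compact$.

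Second, I would observe that this inequality rewrites the event in the statement as a large-deviation event for $\repnumsum EM$. Indeed, if $\big| \lfloor \pmat E \rfloor \big| < \delta |E| / \|E\|^{\max(0,\alpha-k)}$, then the pointwise bound forces $\repnumsum EM > |E|\,\|E\|^{\max(0,\alpha-k)} / \delta$. Hence the set in question is contained in $\{ M \in \compact : \repnumsum EM > t \}$ with threshold $t = |E|\,\|E\|^{\max(0,\alpha-k)} / \delta$. Applying Markov's inequality to the nonnegative function $M \mapsto \repnumsum EM$ and recalling that $\Delta(E) = \int_\compact \repnumsum EM \, dm(M)$, the measure of this set is at most $\Delta(E)/t = \delta\, \Delta(E) / \big( |E|\,\|E\|^{\max(0,\alpha-k)} \big)$. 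Inserting the bound $\Delta(E) \ll_{K,\alpha} |E|\,\|E\|^{\max(0,\alpha-k)}$ from Lemma~\ref{lem:bettercontrolonpairs}, the factors of $|E|\,\|E\|^{\max(0,\alpha-k)}$ cancel and leave $\ll_{K,\alpha} \delta$, as claimed.

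There is little genuine obstacle remaining: the analytic heart of the argument---the transversality estimate of Lemma~\ref{lem:bestgeometry} and the resulting second-moment bound of Lemma~\ref{lem:bettercontrolonpairs}---has already been carried out, so the present proposition is essentially a packaging step. The only points demanding care are applying Cauchy--Schwarz over the support $\lfloor \pmat E \rfloor$ (so the factor appearing is exactly $\big| \lfloor \pmat E \rfloor \big|$ rather than the ambient $\Z^k$), and choosing the threshold $t$ so that the $|E|\,\|E\|^{\max(0,\alpha-k)}$ dependence cancels cleanly against Lemma~\ref{lem:bettercontrolonpairs}; the hypothesis $\alpha \neq k$ is needed only to invoke that lemma.
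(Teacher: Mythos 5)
Your proof is correct and follows essentially the same route as the paper: Cauchy--Schwarz relating $\big|\lfloor \pmat E\rfloor\big|$ to $\repnumsum EM$, then Markov's inequality applied to $\repnumsum EM$ with mean $\Delta(E)$, and finally the bound $\Delta(E)\ll_{K,\alpha}|E|\,\|E\|^{\max(0,\alpha-k)}$ from Lemma \ref{lem:bettercontrolonpairs}. The only cosmetic difference is that the paper phrases the Markov step in terms of the complementary set of ``good'' $M$ with $\repnumsum EM \leq \eps^{-1}\Delta(E)$ and then chooses $\eps$ proportional to $\delta$, whereas you bound the bad set directly; the content is identical.
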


\begin{remark}\label{rem:betterjustdependsonthin}
Note that the upper bound established here depends only the degree to which $E$ is counting bounded; in particular, it is independent of $E$ itself.
\end{remark}

\begin{proof}
Let $E \subseteq \Z^d$ be finite and $(K,\alpha)$-counting bounded. Since
\[\int_\compact S_E(M) \, dm(M) = \Delta(E),\]
for all $\eps > 0$,
\[m\left( \left\{ M \in \compact \ \middle | \ S_E(M) \leq \frac{\Delta(E)}{\eps} \right\} \right) \geq m(\compact) - \eps.\]
If $M \in \compact$ is such that $S_E(M) \leq \eps^{-1} \Delta(E)$, then by Cauchy-Schwarz,
\[\big| \lfloor \pmat E \rfloor \big| \geq \frac{ \left( \sum_{y \in \Z^d} R_{E, M} (y) \right)^2} {\sum_{y \in \Z^{d}} R_{E, M} (y)^2} = \frac{|E|^2}{S_E(M)} \geq \frac{|E|^2}{\eps^{-1}\Delta(E)}.\]
Therefore,
\[m\left( \left\{ M \in \compact \ \middle| \ \big| \lfloor \pmat E \rfloor \big| < \frac{|E|^2}{\eps^{-1}\Delta(E)} \right\} \right) < \eps.\]
Lemma \ref{lem:bettercontrolonpairs} gives that $\Delta(E) \ll_{K,\alpha} |E|\|E\|^{\max(0,\alpha-k)}$, so the result follows by setting $\eps$ equal to the product of $\delta$ and the asymptotic constant.
\end{proof}

We may now prove Theorem \ref{thm:bettermarstrand}.

\begin{proof}
By the monotonicity of the counting measures, it suffices to show that $\cmeas {\min(k,\alpha)} (\pmat A') > 0$ for some $A' \subseteq A$, so, by passing to a subset of $A$ via Proposition \ref{prop:regularsubset}, we may assume that $A$ is $\alpha$-counting regular. Let $(C_n)_n \subseteq \R^d$, $\|C_n\| \rightarrow \infty$, be a sequence of cubes for which
\[0 < \cmeas\alpha(A) = \lim_{n \rightarrow \infty} \frac{|A \cap C_n|}{\|C_n\|^\alpha} < \infty.\]
Let $A_n = A \cap C_n$. Since $\cmeas\alpha(A) < \infty$, $A$ is $(K, \alpha)$-counting bounded for some $0 < K < \infty$, and each $A_n$ is finite and $(K, \alpha)$-counting bounded.

For $\delta > 0$ and $n \in \N$, let
\begin{align}
\label{eqn:betterbigproj} G_\delta^n &= \left\{ M \in \compact \ \middle| \ \big| \lfloor \pmat A_n \rfloor \big| \geq \delta \frac{|A_n|}{\|A_n\|^{\max(0,\alpha - k)}} \right\},\\
\notag G_\delta &= \bigcap_{m =1}^\infty \bigcup_{n = m}^\infty G_\delta^n.
\end{align}
The set $G_\delta$ consists of those $M \in \compact$ for which the inequality in (\ref{eqn:betterbigproj}) holds for infinitely many $n \in \N$. By Proposition \ref{prop:bettersmallprojs}, $m(\compact \setminus G_\delta^n) \ll_{K,\alpha} \delta$, where the bound here is independent of $n$ (see Remark \ref{rem:betterjustdependsonthin}). Since
\[\compact \setminus \bigcap_{m =1}^\infty \bigcup_{n = m}^\infty G_\delta^n = \bigcup_{m =1}^\infty \bigcap_{n = m}^\infty \left( \compact \setminus G_\delta^n \right),\]
we have that $m(\compact \setminus G_\delta) \ll_{K,\alpha} \delta$.

Let $M \in G_\delta$. It follows from (\ref{eqn:explicitformulaforproj}) and the fact that $\compact$ is bounded that $\|\pmat C_n \| \ll \|C_n\|$. Therefore, for each $n \in \N$, there exists a cube $C_n'$ in $\R^d$ containing $\lfloor \pmat C_n \rfloor$ with $\|C_n'\| \ll \|C_n\|$. It follows that
\begin{align}
\notag \cmeas{\min(k,\alpha)}(\pmat A) &\geq \limsup_{n \rightarrow \infty} \frac{\big| \lfloor \pmat A \rfloor \cap C_n' \big|}{\|C_n'\|^{\min(k,\alpha)}} \\
\label{eqn:bettersecond}&\gg \limsup_{n \rightarrow \infty} \frac{\big| \lfloor \pmat A_n \rfloor \big|}{\|C_n\|^{\min(k,\alpha)}}\\
\label{eqn:betterthird}& \geq \limsup_{n \rightarrow \infty} \frac{\delta |A_n| }{\|C_n\|^{\min(k,\alpha)}\|A_n\|^{\max(0,\alpha - k)}}\\
\label{eqn:betterfourth}& \geq \delta \limsup_{n \rightarrow \infty} \frac{|A \cap C_n| }{\|C_n\|^{\alpha}} = \delta \cmeas\alpha(A) > 0,
\end{align}
where (\ref{eqn:bettersecond}) follows since $C_n'$ contains $\lfloor \pmat C_n \rfloor$ and $\|C_n'\| \ll \|C_n\|$, line (\ref{eqn:betterthird}) follows since $M \in G_\delta$, and line (\ref{eqn:betterfourth}) follows since $\|A_n\| \leq \|C_n\|$.

Let $G = \bigcup_{m=1}^\infty G_{m^{-1}}$; by the work above, $m(G) = m(\compact)$, and for all $M \in G$, $\cmeas{\min(k,\alpha)}(\pmat A) > 0$. In other words, for almost every $M \in \compact$, $\cmeas{\min(k,\alpha)}(\pmat A) > 0$.
\end{proof}

\subsection{Increase in counting dimension}\label{sec:dimincrease}

Unlike in the case of Marstrand's original theorem, there are sets which exhibit an increase in counting dimension under the typical projection. In this section, we give an example of such a set.

For $\lambda \in \R$, let $P_\lambda: \R^2 \to \R^2$ be the projection with range $\R \times \{0\}$ and null space $\big\langle (\lambda, -1) \big\rangle$. By associating the range with $\R$ and by a slight abuse of notation, we will write simply that $P_\lambda (a,b) = a+\lambda b$.

\begin{prop}
There exist sets $A \subseteq \R$ and $B \subseteq \Z$ with $\cdim(A) = \cdim(B) = 0$ such that for all $\lambda \neq 0$, the set $\big\lfloor P_\lambda (A \times B) \big\rfloor$ is thick in $\Z$, that is, contains arbitrarily long intervals.
\end{prop}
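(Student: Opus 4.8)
The plan is to reduce the statement to a fact about sum sets and then build the sets by a multi-scale cancellation construction. Since $P_\lambda(A\times B)=A+\lambda B$, and since $\lfloor a+\lambda b\rfloor=\lfloor a\rfloor+\lfloor\{a\}+\lambda b\rfloor$ for every $a\in\R$ and $b\in B$, the set $\lfloor A+\lambda B\rfloor$ is thick exactly when the real sum set $A+\lambda B$ is \emph{$1$-dense} on arbitrarily long windows, i.e. meets every unit interval $[n,n+1)$ for all $n$ in integer intervals of unbounded length. As $\cdim$ depends only on the floor, I must keep both $\lfloor A\rfloor$ and $B$ sparse; note in particular that neither set may contain blocks of consecutive integers of unbounded length, since a single cube filling up forces dimension $1$. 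The conceptual point that makes the proposition possible is that \textbf{the counting dimension is not subadditive under sum sets for two-sided sets}: the naive bound $\cdim(E+F)\le\cdim(E)+\cdim(F)$ holds for $E,F\subseteq\Z_{\ge 0}$ (because $|(E+F)\cap[0,\ell)|\le|E\cap[0,\ell)|\,|F\cap[0,\ell)|$) but fails once cancellation is allowed, and it is precisely cancellation that lets dimension-zero sets have a thick sum set.

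The heart of the construction is a single ``gadget'' producing, for a prescribed length and a prescribed range of $\lambda$, one long $1$-dense window. For a fixed target length $L$ I would take $A$ and $B$ to be \emph{two-sided} sets supported in $[-R,R]$ with $R\gg L$, each built as a sum of a few arithmetic progressions with large, mutually incommensurate (coprime) steps. The two competing requirements to verify are: (i) each of $\lfloor A\rfloor$, $B$ meets any window of length up to its smallest step in only $O(1)$ points, so that its window-counts grow subpolynomially and its contribution to the dimension is small; and (ii) by a Dirichlet/three-distance estimate the cross sums $a+\lambda b$ cluster densely enough that $A+\lambda B$ hits every unit subinterval of some window of length $L$. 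The delocalization $R\gg L$ is essential: the local density of $A+\lambda B$ inside the window comes entirely from cross terms, not from local density of $A$ or of $B$, so the individual sets stay sparse while the sum set is thick. Using more progressions drives the per-gadget dimension of each set toward $0$.

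I would then assemble the final sets by placing gadgets for $k=1,2,\dots$ at super-separated scales and setting $A=\bigcup_k A_k$ and $B=\bigcup_k B_k$. Because the gadgets are far apart, any large cube meets essentially one gadget, so $\cdim(A)=\limsup_k\cdim(A_k)$ and likewise for $B$; choosing the parameters of gadget $k$ so that its per-set dimension tends to $0$ then yields $\cdim(A)=\cdim(B)=0$, while the window lengths $L_k\to\infty$ give thickness. To cover \emph{all} $\lambda\neq0$, I would make the density estimate in (ii) hold uniformly for $\lambda$ in a range such as $[1/k,k]\cup[-k,-1/k]$ (taking enough points, and using two-sidedness to treat $\lambda$ and $-\lambda$ together); then every $\lambda\neq0$ lies in the range of all sufficiently large $k$, so $\lfloor A+\lambda B\rfloor$ contains intervals of length comparable to $L_k$ for infinitely many $k$ and is therefore thick.

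The main obstacle is reconciling the two demands at \emph{every} scale at once: $\cdim(A)=\cdim(B)=0$ forbids any growing block in either floor set, yet covering long intervals appears to require fine (unit) resolution somewhere. The way out is the cancellation mechanism together with far spreading ($R_k\gg L_k$), which supplies unit resolution from difference sets — pairs $a+\lambda b$ and $a'+\lambda b'$ differing by $1$ via $(a-a')+\lambda(b-b')=1$ — rather than from a block. The genuinely delicate quantitative step is making the equidistribution/covering estimate in (ii) \emph{uniform in $\lambda$} over the growing range while simultaneously controlling the window-counts of $A_k$ and $B_k$ at all intermediate scales; this uniform, transversality-type estimate, rather than the soft assembly of the gadgets, is where I expect the real work to lie.
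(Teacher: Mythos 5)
Your overall architecture (multi-scale gadgets at super-separated positions, a range of $\lambda$ growing with the scale, delocalization $R\gg L$, and two-sidedness to handle both signs of $\lambda$) matches the paper's, and your reduction of thickness of $\lfloor A+\lambda B\rfloor$ to $1$-density of $A+\lambda B$ on long windows is correct. But the proof has a genuine gap: the gadget, which is the entire content of the proposition, is never constructed. You explicitly defer the key estimate (ii) --- that a single pair $(A_k,B_k)$ built from ``a few arithmetic progressions with coprime steps'' covers every unit subinterval of some length-$L$ window \emph{uniformly for all} $\lambda\in[1/k,k]\cup[-k,-1/k]$ --- and this is precisely where the difficulty lives. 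Worse, the proposed mechanism runs into a concrete obstruction: for a pair of progressions with steps $p$ and $q$, any $\lambda$ with $\lambda q/p$ near a rational of small denominator makes the cross sums cluster on a coarse grid of spacing $\sim p/m$, so most unit intervals are missed; with finitely many progressions per gadget there is a nonempty set of resonant $\lambda$ for each, and you give no argument that these resonant sets can be made to have empty intersection while keeping the window counts of $A_k$ and $B_k$ subpolynomial at every intermediate scale. The paper avoids this problem entirely: it fixes a countable family $(\lambda_i)$ that is $\eps_n$-dense in the relevant range with $\eps_n=(2n)^{-2n}$, builds one \emph{trivial} gadget per $\lambda_i$ (simply solving $a_j+\lambda_i b_j=n_i+j-\tfrac12$ with $b_j=j^j$, so the $a_j$ are automatically separated by at least $b_{v(i)}$), and observes that since $\max_j b_j\le(2n)^{2n}$, the covered interval is stable under perturbing $\lambda$ by $\eps_n$. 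Your own stability remark (``each configuration works for $\lambda$ in an interval of width $\sim 1/R$'') is exactly the ingredient needed to make this per-$\lambda$ strategy work; you stopped one step short of using it to replace the uniform equidistribution estimate you could not supply.

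A secondary error worth flagging: your ``conceptual point'' that $\cdim(E+F)\le\cdim(E)+\cdim(F)$ for $E,F\subseteq\Z_{\ge0}$, with cancellation being what makes the proposition possible, is false. The inequality $|(E+F)\cap[0,\ell)|\le|E\cap[0,\ell)|\,|F\cap[0,\ell)|$ only controls intervals based at the origin, which is the relevant statement for the \emph{mass} dimension; the counting dimension takes a limit supremum over all translates, and a window $[c,c+L)$ with $c$ super-polynomially large in $L$ can be covered by $E+F$ with $E,F\subseteq\Z_{\ge0}$ both of counting dimension $0$ (e.g.\ $F=\{j^j\}_{N<j\le 2N}$ and $E=\{c+j-j^j\}_{N<j\le 2N}$ for $c\ge (2N)^{2N}$). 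What makes the proposition possible is not the sign structure of the sets but the fact that the counting dimension permits the covered window to sit arbitrarily far from the origin relative to its length; this is also why the analogous statement fails for the mass dimension. This does not break your construction, but it signals that the role you assign to two-sidedness (beyond handling the sign of $\lambda$) is a red herring.
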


\begin{proof}
For $n \in \N$, let $b_n = n^n$, and set $B = (b_n)_n$. Let $v: \N \to \N$ be a sequence with the property that for all $n \in \N$, the set $v^{-1}(n)$ is infinite (for example, $v(n) = \nu_2(n)+1$, where $\nu_2$ is the 2-adic valuation). Let $\eps_n = {(2n)}^{-2n}$, and let $(\lambda_i)_i \subseteq (-\infty,0)$ be such that for all $n \in \N$,
\begin{align}\label{eqn:lambdaprop}\big\{ \lambda_i \ \big| \ v(i) = n \big\} \text{ is contained in and is } \eps_n\text{-dense in } \left(-\infty,-n^{-1} \right).\end{align}
We now construct finite sets $A_1, A_2, \ldots \subseteq \R$ inductively so that $A' = \cup_i A_i$ has counting dimension 0 and such that for all $\lambda < 0$, the set $\big\lfloor P_\lambda(A' \times B) \big\rfloor$ is thick. The sets $A = A' \cup (-A')$ and $B$ will then satisfy the conclusions of the proposition.

Choose $1 < a_1^1 < \cdots < a_{v(1)}^1 \in \R$ so that for some $n_1 \in \Z$, for all $1 \leq i \leq v(1)$,
\[P_{\lambda_1} \big( (a_i^1,b_{v(1)+i-1}) \big) = n_1 + i - \frac{1}{2}.\]
(This can be accomplished by intersecting the horizontal lines passing through points of $\{0\} \times B$ with lines of slope $-\lambda_1^{-1}$ passing through half-integer points on $\R \times \{0\}$.) We claim that the set $A_1 = \{a^1_i\}_{i = 1}^{v(1)}$ has the following properties:
\begin{enumerate}[i.]
\item $\sup_{\|C\| \geq 1} \frac{\log |A_1 \cap C|}{\log \|C\|} < v(1)^{-1}$, and
\item for all $\lambda \in \left( \lambda_1 - \eps_{v(1)}, \lambda_1 + \eps_{v(1)} \right)$,
\[\big\lfloor P_{\lambda}(A_1) \big\rfloor = \big\lfloor P_{\lambda_1}(A_1) \big\rfloor = \big\{n_1, \ldots, n_1 + v(1) - 1\big\}.\]
\end{enumerate}

To see i., note that for $1 \leq i \leq v(1)$,
\[n_1 + i - \frac{1}{2} = P_{\lambda_1}\big( ( a_i^1, b_{v(1)+i-1}) \big) = a_i^1 + \lambda_1 b_{v(1)+i-1}.\]
It follows from this, the definition of the $b_n$'s, and (\ref{eqn:lambdaprop}) that
\[a_{i+1}^1 - a_i^1 = 1- \lambda_1 (b_{v(1)+i} - b_{v(1)+i-1}) > \frac{b_{v(1)+1} - b_{v(1)}}{v(1)} > b_{v(1)}.\]
Therefore, if $C \subseteq \R$ is a cube with $\|C\| \geq 1$ and $m = |A_1 \cap C | \geq 2$, then $\|C\| \geq (m-1) b_{v(1)}$ and
\[\frac{\log |A_1 \cap C|}{\log \|C\|} \leq \frac{\log m}{\log (m-1) + v(1) \log v(1)} \leq \frac{1}{v(1)}.\]

To see ii., note that for any $\lambda \in \R$,
\begin{align*}
P_{\lambda}\big( ( a_i^1, b_{v(1)+i-1}) \big) &= P_{\lambda_1}\big( ( a_i^1, b_{v(1)+i-1}) \big) + (\lambda - \lambda_1) b_{v(1)+i-1}\\
&= n_1 + i - \frac{1}{2} + (\lambda - \lambda_1) b_{v(1)+i-1}.
\end{align*}
Therefore, if for all $1 \leq i \leq v(1)$,
\begin{align}\label{eqn:lambdarange}\lambda - \lambda_1 < \frac{1}{2 b_{v(1)+i-1}},\end{align}
then $\lfloor P_\lambda A_1 \rfloor = \lfloor P_{\lambda_1} A_1 \rfloor$. Since $\eps_{v(1)} = {(2v(1))}^{-2v(1)} < (2 b_{2v(1)-1})^{-1}$, equation (\ref{eqn:lambdarange}) holds for all $\lambda \in \left( \lambda_1 - \eps_{v(1)}, \lambda_1 + \eps_{v(1)} \right)$.

Suppose now that $A_i \subseteq \R$ and $n_i \in \R$ have been defined for all $1 \leq i \leq m$ in such a way that properties i. and ii. hold (with $A_i, \lambda_i, v(i)$, and $n_i$ replacing $A_1, \lambda_1, v(1)$, and $n_1$), and so that
\begin{enumerate}[i.]
\setcounter{enumi}{2}
\item $\displaystyle \sup_{\|C\| \geq 1} \frac{\log \big|\cup_{i=1}^m A_i \cap C \big|}{\log \|C\|} \leq \max_{i=1,\ldots,m} \sup_{\|C\| \geq 1} \frac{\log | A_i \cap C|}{\log \|C\|}$.
\end{enumerate}
(Property iii. is achievable by choosing at each stage $a_1^i$ much larger than $a_{v(i-1)}^{i-1}$ so as to separate sufficiently $A_{i}$ from $A_{i-1}$.)

Choose $a_1^{m+1} \in \R$ sufficiently large, and choose $a_1^{m+1} < a_2^{m+1} < \cdots < a_{v(m+1)}^{m+1} \in \R$ so that for some $n_{m+1} \in \Z$, for all $1 \leq i \leq v(m+1)$,
\[P_{\lambda_{m+1}} \big( (a_i^{m+1},b_{v(m+1)+i-1}) \big) = n_{m+1} + i - \frac{1}{2}.\]
Properties i. and ii. hold for $A_{m+1} = \{a^{m+1}_i\}_{i = 1}^{v(m+1)}$ by the same reasoning as above, and property iii. holds if $a_1^{m+1}$ was chosen large enough.

Set $A' = \cup_i A_i$. It follows from properties i. and iii. of the sets $A_1, A_2, \ldots$ above that $\cdim(A') = 0$, and it follows from property ii. and (\ref{eqn:lambdaprop}) that for all $\lambda < 0$, the set $\lfloor P_\lambda (A' \times B) \rfloor$ is thick. By symmetry, the sets $A = A' \cup (-A')$ and $B$ satisfy the conclusions of the proposition.
\end{proof}

Using Corollary \ref{cor:qiecorone} and the fact that $\big\lfloor P_\lambda (A \times B) \big\rfloor = \lfloor A+\lambda B \rfloor$ is thick for all $\lambda \neq 0$, we may conclude that $E = \lfloor A \rfloor \cup B$ is a set of integers with $\cdim(E) = 0$ for which there exists an $\eps > 0$ such that for all $\lambda \neq 0$,
\[\cmeas1 \big( E + \lambda E \big) = d^* \big( \lfloor E + \lambda E \rfloor \big) \geq \eps.\]
In other words, even for sets of integers, the generic sumset $\lambda_1 A + \lambda_2 B$ may have dimension strictly greater than $\cdim(A) + \cdim(B)$.

It is worth mentioning here an example from Section 4.3 of \cite{lima} addressing the other extreme.  There exist sets $A, B \subseteq \Z$ with $\cdim(A) + \cdim(B) > 1$ for which $d^*(A + \lambda B) = 0$ for all $\lambda \in \R$. 

\subsection{A Marstrand-type theorem for the mass dimension}

In this section, we prove the main Marstrand-type result for the mass dimension. In contrast to the situation for the counting dimension, a decrease, but not an increase, in mass dimension is possible under the typical projection of a set $A \subseteq \R^d$. A Marstrand-type result is recovered under the additional assumption that $\mdim(A) = \cdim(A)$.

\begin{reptheorem}{thm:bettermarstrandformass}
Let $A \subseteq \R^d$ be such that $\cdim(A) = \mdim(A)$, and let $0 \leq k \leq d$. For almost every $P \in \proj dk$,
\[\mdim \big( PA \big) = \min \big(k, \mdim(A) \big)\]
and, if $A$ is counting and mass regular and $\mdim(A) \neq k$, then $\mmeas{\min (k, \mdim(A))}(P A) > 0$.
\end{reptheorem}

The proof is separated into two bounds for $\mdim(PA)$ for the typical projection $P \in \proj dk$: a lower bound (Proposition \ref{prop:relatemassandcounting}) similar to that in Theorem \ref{thm:bettermarstrand} and an upper bound (Proposition \ref{prop:upperboundondim}) showing that an increase in mass dimension does not occur. Theorem \ref{thm:bettermarstrandformass} follows immediately upon combining the two results.

\begin{prop}\label{prop:relatemassandcounting}
Let $A \subseteq \R^d$ and $0 \leq k \leq d$. For almost every $P \in \proj dk$,
\[\mdim(P A) \geq \min \big( \mdim(A), \mdim(A) - \cdim(A) + k \big).\]
If $A$ is counting and mass regular and $\cdim(A) = \mdim(A) \neq k$, then for almost every $P \in \proj dk$, $\mmeas{\min (k, \mdim(A))}(P A) > 0$.
\end{prop}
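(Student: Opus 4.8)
The plan is to mirror the proof of Theorem~\ref{thm:bettermarstrand}, replacing arbitrary cubes by centered ones; the one genuinely new ingredient is that a linear projection carries a centered cube into a centered cube of comparable side length. First I would reduce exactly as in Theorem~\ref{thm:bettermarstrand}: by Corollary~\ref{cor:qiecorone} (the sets $\pmat A$ and $\pmat \lfloor A \rfloor$ are Hausdorff-close since $\compact$ is bounded) together with the measure equivalences of Section~\ref{sec:bettermeasuresonspaces}, it suffices to fix $A \subseteq \Z^d$ and a compact $\compact \subseteq \matspace k{(d-k)}$ and prove both assertions for $P = \pmat$ and $m$-a.e.\ $M \in \compact$, suppressing constants depending on $k,d,\compact$. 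Write $\alpha = \cdim(A)$ and $\beta = \mdim(A)$.

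For the lower bound on $\mdim(\pmat A)$, I would fix $\gamma < \min(\beta, \beta - \alpha + k) = \beta - \max(0,\alpha-k)$ and pick $\alpha' > \alpha$ with $\alpha' \neq k$ so close to $\alpha$ that $\gamma < \beta - \max(0,\alpha'-k)$ still holds; then $A$ (hence each of its subsets) is $(K,\alpha')$-counting bounded for some $K$ because $\cmeas{\alpha'}(A) = 0$. Next I would select centered cubes $(C_n)_n$, $\|C_n\| \to \infty$, along which $A$ achieves its mass dimension, set $A_n = A \cap C_n$, and apply Proposition~\ref{prop:bettersmallprojs} with exponent $\alpha'$ to each finite $(K,\alpha')$-counting bounded set $A_n$. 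As in Theorem~\ref{thm:bettermarstrand}, this makes the bad sets
\[ \left\{ M \in \compact \ \middle| \ \big|\lfloor \pmat A_n\rfloor\big| < \frac{\delta |A_n|}{\|A_n\|^{\max(0,\alpha'-k)}} \right\} \]
have measure $\ll_{K,\alpha'} \delta$ uniformly in $n$, so the set $G_\delta$ of $M$ lying in the complementary good set for infinitely many $n$ satisfies $m(\compact \setminus G_\delta) \ll_{K,\alpha'} \delta$.

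The decisive step is then to take the image cubes centered: since $\pmat$ is linear with $\pmat 0 = 0$ and $\compact$ is bounded, (\ref{eqn:explicitformulaforproj}) shows that $C_n$ maps into a centered cube $C_n'$ containing $\lfloor \pmat A_n \rfloor$ with $\|C_n'\| \ll \|C_n\|$. Using $\|A_n\| \leq \|C_n\|$, for $M \in G_\delta$ I would estimate
\[ \mmeas\gamma(\pmat A) \geq \limsup_n \frac{\big|\lfloor\pmat A\rfloor\cap C_n'\big|}{\|C_n'\|^\gamma} \gg \limsup_n \frac{\delta |A_n|}{\|C_n\|^{\gamma+\max(0,\alpha'-k)}} = \infty, \]
the limit supremum being taken along the infinitely many $n$ with $M$ good and being infinite because $\log|A_n|/\log\|C_n\| \to \beta > \gamma + \max(0,\alpha'-k)$. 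Hence $\mdim(\pmat A) \geq \gamma$ on $\bigcup_j G_{1/j}$, a full-measure set, and letting $\gamma \uparrow \min(\beta, \beta-\alpha+k)$ through rationals proves the first assertion. For the second I cannot pass to a subset --- by Example~\ref{ex:nosimultsubset} simultaneously counting- and mass-regular subsets need not exist, which is precisely why $A$ itself is assumed regular --- so I would work with $A$ directly: with $\alpha = \beta \neq k$ and $0 < \cmeas\beta(A), \mmeas\beta(A) < \infty$, I would choose the centered cubes so that $|A_n|/\|C_n\|^\beta \to \mmeas\beta(A) =: J > 0$, run the same argument with $\alpha' = \beta$ and exponent $\min(k,\beta)$, and invoke the identity $\min(k,\beta) + \max(0,\beta-k) = \beta$ to obtain, for $M \in G_\delta$,
\[ \mmeas{\min(k,\beta)}(\pmat A) \gg \limsup_n \frac{\delta |A_n|}{\|C_n\|^{\min(k,\beta)+\max(0,\beta-k)}} = \limsup_n \frac{\delta |A_n|}{\|C_n\|^\beta} = \delta J > 0; \]
the union over $\delta = 1/j$ is again full measure.

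The main obstacle I anticipate is honestly justifying that the image cubes may be taken centered with side length $\ll \|C_n\|$: it is here that the centered-cube nature of the mass dimension forces use of the \emph{linearity} of $\pmat$ (not merely its boundedness, which sufficed in Theorem~\ref{thm:bettermarstrand}), and some care is needed because $C_n = [-\ell,\ell)^d$ is only almost symmetric and the floor operation enlarges the image by a bounded amount. A secondary point requiring attention is that the quantitative bounds must be read off along the subsequence of $n$ for which $M$ is good; this works only because $(C_n)_n$ realizes the mass dimension (resp.\ mass measure) along the \emph{full} sequence, so that the ratios $\log|A_n|/\log\|C_n\|$ (resp.\ $|A_n|/\|C_n\|^\beta$) converge along every infinite subsequence.
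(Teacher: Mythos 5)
Your proposal is correct and follows essentially the same route as the paper's proof: reduce to $A \subseteq \Z^d$ and a compact $\compact$, take $\alpha' > \cdim(A)$ with $\alpha' \neq k$ so that $A$ is $(K,\alpha')$-counting bounded, apply Proposition \ref{prop:bettersmallprojs} to the sets $A_n = A \cap C_n$ along centered cubes realizing the mass dimension, use linearity of $\pmat$ to keep the image cubes centered, and handle the regular case by working with $A$ itself along cubes realizing the mass measure. The only cosmetic difference is that you certify $\mdim(\pmat A) \geq \gamma$ by showing $\mmeas{\gamma}(\pmat A) = \infty$ for each $\gamma$ below the target, while the paper estimates the logarithmic ratio directly; both are equivalent.
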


As in the proof of Theorem \ref{thm:bettermarstrand}, it suffices to fix $d \geq 2$, $1 \leq k \leq d-1$, and, by Corollary \ref{cor:qiecorone}, $A \subseteq \Z^d$ and prove the statement with $P = P_M$ for $m$-almost every $M$ in a fixed compact subset $\compact$ of $\matspace k{(d-k)}$. Again, dependence on $k$, $d$, and $\compact$ in the asymptotic notation will be suppressed.

The analogous notion of thin-ness is useful here; recall Definition \ref{def:countingthin}.

\begin{definition}\label{def:massthin}
Let $\alpha \geq 0$ and $0 < K < \infty$.  A (possibly finite) set $E \subseteq \Z^d$ is $(K,\alpha)$-\emph{mass bounded} if for all centered cubes $C \subseteq \R^d$ with $\|C\| \geq 1$, $|E \cap C| \leq K \|C\|^\alpha$.
\end{definition}

\begin{remark}
Whenever a set is $(K,\alpha)$-mass bounded, it will be understood that $\alpha \geq 0$ and $0 < K < \infty$. Note that $\mmeas\alpha(A) < \infty$ if and only if $A$ is $(K,\alpha)$-mass bounded for some $K$.
\end{remark}

Now we may prove Proposition \ref{prop:relatemassandcounting}; the proof uses tools from Section \ref{sec:betterspecialcase} and is similar to the proof of Theorem \ref{thm:bettermarstrand}.

\begin{proof}
Let $\alpha > \cdim(A)$, $\alpha \neq k$. Let $(C_n)_n \subseteq \R^d$ be a sequence of centered cubes along which $A$ achieves its mass dimension, and let $A_n = A \cap C_n$. Since $\cmeas\alpha(A) = 0$, the set $A$ is $(K, \alpha)$-counting bounded for some $0 < K < \infty$, and each $A_n$ is finite and $(K, \alpha)$-counting bounded.

For $\delta > 0$ and $n \in \N$, let $G_\delta^n$ and $G_\delta$ be defined as in (\ref{eqn:betterbigproj}). It follows just as before that $m(\compact \setminus G_\delta) \ll_{K,\alpha} \delta$.

Let $M \in G_\delta$. It follows from (\ref{eqn:explicitformulaforproj}) and the fact that $\compact$ is bounded that $\|\pmat C_n \| \ll \|C_n\|$. Since $\pmat$ is linear, for each $n \in \N$, there exists a centered cube $C_n'$ such that $\lfloor \pmat C_n \rfloor \subseteq C_n'$ and $\|C_n'\| \ll \|C_n\|$. Then,
\begin{align}
\notag \mdim(\pmat A) &\geq \limsup_{n \rightarrow \infty} \frac{\log \big| \lfloor \pmat A \rfloor \cap C_n' \big|}{\log \|C_n'\|}\\
\label{eqn:bettersecondtwo} &\geq \limsup_{n \rightarrow \infty} \frac{\log \big| \lfloor \pmat A_n \rfloor \big|}{\log \|C_n\|}\\
\label{eqn:betterthirdtwo} & \geq \limsup_{n \rightarrow \infty} \frac{\log \big( \delta |A_n| \|A_n\|^{-\max (0,\alpha-k )} \big) }{\log \|C_n\|}\\
\notag &= \limsup_{n \rightarrow \infty} \left( \frac{\log \delta}{\log \|C_n\|} + \frac{\log |A_n|}{\log \|C_n\|} - \max\big(0,\alpha-k \big) \frac{\log \|A_n\|}{\log \|C_n\|} \right)\\
\label{eqn:betterfourthtwo} & \begin{aligned} \, =& \lim_{n \rightarrow \infty} \left( \frac{\log \delta}{\log \|C_n\|} \right) + \lim_{n \rightarrow \infty} \left( \frac{\log |A \cap C_n|}{\log \|C_n\|} \right) \\ &\qquad \qquad \qquad \qquad + \min\big(0,-\alpha+k \big) \liminf_{n \rightarrow \infty} \left( \frac{\log \|A_n\|}{\log \|C_n\|} \right)\end{aligned}\\
\label{eqn:betterfifthtwo} & \geq \mdim(A) + \min\big(0,-\alpha+k \big),
\end{align}
where (\ref{eqn:bettersecondtwo}) follows since $C_n'$ contains $\lfloor \pmat C_n \rfloor$ and $\|C_n'\| \ll \|C_n\|$; line (\ref{eqn:betterthirdtwo}) follows since $M \in G_\delta$; line (\ref{eqn:betterfourthtwo}) follows since the second limit in exists since $A$ was assumed to achieve its mass dimension along $(C_n)_n$, and the $\limsup$ becomes $\liminf$ by the sign of its coefficient; and line (\ref{eqn:betterfifthtwo}) follows since $\|A_n\| \leq \|C_n\|$.

If $G = \bigcup_{m=1}^\infty G_{m^{-1}}$, then $m(G) = m(\compact)$, and so for almost every $M \in \compact$,
\[\mdim(\pmat A) \geq \min\big(\mdim(A),\mdim(A)-\alpha+k \big).\]
Since $\alpha > \cdim(A)$, $\alpha \neq k$, was arbitrary, for almost every $M \in \compact$,
\[\mdim(\pmat A) \geq \min\big(\mdim(A),\mdim(A)-\cdim(A)+k \big).\]

If $A$ is both counting and mass regular and $\cdim(A) = \mdim(A) \neq k$, set $\alpha = \mdim(A)$ and choose a sequence of centered cubes $(C_n)_n$ so that $A$ achieves its mass measure along it. Since $\cmeas\alpha(A) < \infty$, there is a $0 < K < \infty$ so that each $A_n = A \cap C_n$ is finite and $(K, \alpha)$-counting bounded. Proceeding now as in the proof of Theorem \ref{thm:bettermarstrand}, we may conclude that for almost every $M \in \compact$, $\mmeas{\min (k, \mdim(A))}(\pmat A) > 0$.
\end{proof}

The assumption of counting and mass regularity is necessary in the proof above due to the fact that sets in general do not admit subsets which are both counting and mass regular (see Example \ref{ex:nosimultsubset}).

We now turn to bounding the mass dimension of the typical projection from above. This reduces easily in the continuous setting to the fact that projection maps are Lipschitz, but there is no such analogue here. In general, there may be many directions in which the mass dimension of a set increases under projection. Indeed, it is an exercise to construct a set $A \subseteq \Z^2$ with $\mdim(A) = 0$ (actually, $\cdim(A)=0$ just as easily) for which $\lfloor P_\lambda A \rfloor = \Z$ for all $\lambda \in \mathbb{Q}$ (in the notation of Section \ref{sec:dimincrease}). However, it is still true that the mass dimension does not increase under the typical projection.

\begin{prop}\label{prop:upperboundondim}
Let $A \subseteq \R^d$ and $0 \leq k \leq d$. For almost every $P \in \proj dk$,
\[\mdim \big( P A \big) \leq \min \big( k, \mdim(A) \big).\]
\end{prop}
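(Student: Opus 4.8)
The plan is to prove the two bounds $\mdim(\pmat A) \le k$ and $\mdim(\pmat A) \le \mdim(A)$ separately. The first holds for \emph{every} $M$: since $\pmat A \subseteq \R^k \times \{0\}^{d-k}$, its floor lies in a translate of $\Z^k \times \{0\}^{d-k}$, so $\mdim(\pmat A) \le k$ by Lemma \ref{lem:basiccountingprops} (mass version). Hence it suffices to treat the case $\mdim(A) < k$ and to show $\mdim(\pmat A) \le \mdim(A)$ for almost every projection. Exactly as in the proof of Theorem \ref{thm:bettermarstrand}, I would reduce via Corollary \ref{cor:qiecorone} and the remarks following it to $A \subseteq \Z^d$, and, using the measure equivalence of Section \ref{sec:bettermeasuresonspaces} together with countable additivity, to proving the bound for $m$-almost every $M$ in a fixed compact set $\compact \subseteq \matspace k{(d-k)}$; dependence on $k,d,\compact$ in the asymptotic notation is suppressed throughout.

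The heart of the argument is a first-moment estimate. Fix $\alpha$ with $\mdim(A) < \alpha < k$; then $\mmeas\alpha(A) = 0$, so $A$ is $(K,\alpha)$-mass bounded for some $K$ (Definition \ref{def:massthin}). Writing $z = (x,y) \in \Z^k \times \Z^{d-k}$ and using $\pi \circ \pmat = \pi + M \circ \pi'$ from (\ref{eqn:explicitformulaforproj}), the condition $\lfloor \pmat z \rfloor \in [-\ell,\ell)^k$ is precisely $x + My \in [-\ell,\ell)^k$. Since $\lfloor \pmat A \rfloor = \{ \lfloor \pmat z \rfloor \ | \ z \in A \}$, I would bound
\[\int_\compact \big|\lfloor \pmat A\rfloor\cap[-\ell,\ell)^k\big|\,dm(M) \le \sum_{z\in A} m\big(\big\{M\in\compact \ \big| \ x+My\in[-\ell,\ell)^k\big\}\big).\]
For $z$ with $y = 0$ the inner measure equals $m(\compact)$ when $x\in[-\ell,\ell)^k$ and $0$ otherwise; for $y\neq 0$, the scaling computation underlying Lemma \ref{lem:bestgeometry}, now with a target cube of side $2\ell$ in place of a fixed ball, yields the transversality bound $m(\{M\in\compact \ | \ My \in B\}) \ll \ell^k|y|^{-k}$ for every translate $B$ of $[-\ell,\ell)^k$.

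To render this sum convergent I would split $A$ according to $|z|$. A point $z$ contributes only if $x + My \in [-\ell,\ell)^k$ for some $M\in\compact$, which forces $|x| \ll |y| + \ell$ and hence, once $|z|$ exceeds a fixed constant multiple of $\ell$, $|y| \gg |z|$. The near points with $|z|\ll\ell$ number $\ll \ell^\alpha$ by mass-boundedness and each contributes at most $m(\compact)$; the far points I would organize into dyadic shells $|z|\sim 2^j$, each containing $\ll 2^{j\alpha}$ points and contributing $\ll 2^{j\alpha}\cdot\ell^k 2^{-jk} = \ell^k 2^{j(\alpha-k)}$. Since $\alpha < k$ this geometric series is dominated by its initial term of size $\ll \ell^\alpha$, giving $\int_\compact |\lfloor \pmat A\rfloor\cap[-\ell,\ell)^k|\,dm(M) \ll \ell^\alpha$. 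This is exactly where the hypothesis $\mdim(A) < k$, i.e.\ $\alpha < k$, enters; for $\alpha > k$ the series diverges, consistent with the trivial bound $k$ being sharp there.

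Finally I would pass from the averaged estimate to an almost-everywhere statement via Markov's inequality and Borel--Cantelli. Fixing $\beta > \alpha$ and $\ell_n = 2^n$, the sets where $|\lfloor \pmat A\rfloor\cap[-\ell_n,\ell_n)^k| > \ell_n^\beta$ have measure $\ll \ell_n^{\alpha-\beta}$, which is summable, so for almost every $M$ this count is at most $\ell_n^\beta$ for all large $n$; monotonicity in $\ell$ then upgrades this to $|\lfloor \pmat A\rfloor\cap[-\ell,\ell)^k| \ll (2\ell)^\beta$ for all large $\ell$, whence $\mdim(\pmat A)\le\beta$. Letting $\beta\downarrow\alpha$ and then $\alpha\downarrow\mdim(A)$ along countable sequences and intersecting the resulting full-measure sets gives $\mdim(\pmat A)\le\mdim(A)$ almost everywhere; together with the trivial bound $\mdim(\pmat A)\le k$ this proves the proposition. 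I expect the main obstacle to be the first-moment bound itself, specifically obtaining the transversality estimate with the correct $\ell$-dependence and arranging the split so the far-point contribution telescopes geometrically; the measure-theoretic extraction at the end is routine.
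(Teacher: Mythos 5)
Your proof is correct and follows essentially the same route as the paper's: the trivial bound $\mdim(\pmat A)\le k$, a first-moment estimate $\int_\compact\big|\lfloor\pmat A\rfloor\cap[-\ell,\ell)^k\big|\,dm\ll\ell^\alpha$ obtained from the transversality computation of Lemma \ref{lem:bestgeometry} together with a dyadic shell decomposition whose geometric series converges precisely because $\alpha<k$, and then Chebyshev plus Borel--Cantelli along the scales $2^n$ (this is the content of the paper's Lemmas \ref{lem:dimalongexpcubes} and \ref{lem:lemmaforupperbound}). The only cosmetic difference is that you sum over source points $z\in A$ using an $\ell$-scale transversality bound, whereas the paper sums over target lattice points in $\Is_n^k$ paired with points of $A$ via the unit-scale sets $\compact_{z,z'}$; these are two orderings of the same double sum and yield the same bound.
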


Again, fix $d \geq 2$, $1 \leq k \leq d-1$, $A \subseteq \Z^d$, and $\compact \subseteq \matspace k{(d-k)}$ compact. In what follows, let $I_n^d$ be the centered cube $\left[-\frac n2, \frac n2 \right)^d$. Dependence on $k$, $d$, and $\compact$ in the asymptotic notation will be suppressed.

We need two lemmas, the first of which is an easy estimation argument whose proof is left to the reader.

\begin{lemma}\label{lem:dimalongexpcubes}
For all $A \subseteq \R^d$,
\[\mdim(A) = \limsup_{j \to \infty} \frac{\log \big| \lfloor A \rfloor \cap I_{2^j}^d \big|}{\log 2^j}.\]
\end{lemma}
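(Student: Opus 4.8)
The plan is to deduce this directly from the mass-dimension analogue of the explicit formula (\ref{eqn:explicitformula}) in Lemma \ref{lem:basiccountingprops}, which states that for non-empty $A$,
\[\mdim(A) = \limsup_{\substack{\|C\| \to \infty \\ C \text{ centered}}} \frac{\log \big| \lfloor A \rfloor \cap C \big|}{\log \|C\|}.\]
(The empty set is degenerate and may be set aside.) The assertion is then that restricting this limit supremum to the sparse subfamily of centered cubes with dyadic side lengths $2^j$ does not change its value.

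The inequality $\limsup_{j \to \infty} \frac{\log | \lfloor A \rfloor \cap I_{2^j}^d |}{\log 2^j} \leq \mdim(A)$ is immediate, since each $I_{2^j}^d$ is a centered cube with $\|I_{2^j}^d\| = 2^j \to \infty$, so the dyadic limit supremum is taken over a subfamily of the cubes appearing in the formula above.

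For the reverse inequality I would begin with an arbitrary centered cube $C$ of side length $\ell = \|C\|$ and set $j = \lceil \log_2 \ell \rceil$, so that $2^{j-1} \leq \ell \leq 2^j$. Since $C$ and $I_{2^j}^d$ are centered and $\|C\| \leq 2^j = \|I_{2^j}^d\|$, we have $C \subseteq I_{2^j}^d$ and hence $\big| \lfloor A \rfloor \cap C \big| \leq \big| \lfloor A \rfloor \cap I_{2^j}^d \big|$. Combining this with $\log \ell \geq \log 2^{j-1} = (j-1)\log 2$ gives
\[\frac{\log \big| \lfloor A \rfloor \cap C \big|}{\log \ell} \leq \frac{j}{j-1} \cdot \frac{\log \big| \lfloor A \rfloor \cap I_{2^j}^d \big|}{\log 2^j}.\]

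The final step is to pass to the limit supremum as $\ell \to \infty$, equivalently $j \to \infty$. The one point deserving care is that the two denominators differ by the constant $\log 2$, which produces the correction factor $\frac{j}{j-1}$; this is where the main (albeit mild) obstacle lies. Since this factor decreases to $1$ and the quantities $a_j := \frac{\log | \lfloor A \rfloor \cap I_{2^j}^d |}{\log 2^j}$ are nonnegative and uniformly bounded by $d$ (because $\big| \lfloor A \rfloor \cap I_{2^j}^d \big| \leq 2^{jd}$), the factor can be absorbed into the limit supremum: for each $\eps > 0$ one has $\frac{j}{j-1} \leq 1 + \eps$ for all large $j$, so $\limsup_j \frac{j}{j-1} a_j \leq (1+\eps) \limsup_j a_j$, and letting $\eps \to 0$ shows $\mdim(A) \leq \limsup_j a_j$. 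Combining the two inequalities yields the claimed identity.
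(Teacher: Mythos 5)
Your proof is correct. The paper does not supply an argument for this lemma (it is explicitly ``left to the reader'' as ``an easy estimation argument''), and your argument---monotonicity of $C \mapsto \big| \lfloor A \rfloor \cap C \big|$ under inclusion into the enclosing dyadic cube $I_{2^j}^d$, plus absorbing the bounded correction factor $\tfrac{j}{j-1}$ into the limit supremum---is precisely the intended estimation.
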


\begin{lemma}\label{lem:lemmaforupperbound}
Let $E \subseteq \Z^d$ be $(K,\alpha)$-mass bounded and $0 \leq k \leq d$. If $\alpha < k$, then for all $n \in \N$ and for all $\eps > 0$,
\[m \left(\left\{ M \in \compact \ \middle| \ \frac{\log \big|\lfloor \pmat E \rfloor \cap I_n^d \big|}{\log n} > \alpha + \eps \right\}\right) \ll_{K,\alpha} n^{-\eps}.\]
\end{lemma}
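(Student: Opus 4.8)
The plan is to run a first-moment (Markov) argument. Since the condition $\frac{\log \big| \lfloor \pmat E \rfloor \cap I_n^d \big|}{\log n} > \alpha + \eps$ is the same as $\big| \lfloor \pmat E \rfloor \cap I_n^d \big| > n^{\alpha + \eps}$, Markov's inequality immediately reduces the statement to the single-moment estimate
\[\int_\compact \big| \lfloor \pmat E \rfloor \cap I_n^d \big| \, dm(M) \ll_{K,\alpha} n^\alpha,\]
because dividing by the threshold $n^{\alpha + \eps}$ then produces exactly the claimed $n^{-\eps}$. So the whole problem is to bound this integral, and the hypothesis $\alpha < k$ will be used precisely to control it.

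To estimate the integral I would expand the count over the image lattice points and apply a union bound over the preimages in $E$: a point $y$ lies in $\lfloor \pmat E \rfloor$ iff $\lfloor \pmat z \rfloor = y$ for some $z \in E$, so $\big| \lfloor \pmat E \rfloor \cap I_n^d \big| \le \sum_{z \in E} \chi\big[\lfloor \pmat z \rfloor \in I_n^d\big]$. Integrating and using that for fixed $z$ the events $\{\lfloor \pmat z \rfloor = y\}$ are disjoint in $y$, this gives
\[\int_\compact \big| \lfloor \pmat E \rfloor \cap I_n^d \big| \, dm(M) \le \sum_{z \in E} m\big( \{ M \in \compact \mid \lfloor \pmat z \rfloor \in I_n^d \} \big).\]
The key per-point estimate is then
\[m\big( \{ M \in \compact \mid \lfloor \pmat z \rfloor \in I_n^d \} \big) \ll \min\big( 1, \, n^k | \ptwo z |^{-k} \big).\]
Indeed, by \eqref{eqn:explicitformulaforproj} the event forces $\pone z + M \ptwo z$ into a cube of side $\ll n$; when $\ptwo z \neq 0$ this is exactly the rotation-and-scaling computation behind Lemma \ref{lem:bestgeometry} (the estimate \eqref{eqn:easyexercise}), now carried out with radius $\sim n$ in place of $\sqrt{k}$, and it is trivially also bounded by $m(\compact) \ll 1$.

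Finally I would sum this per-point bound by decomposing $E$ dyadically in $|z|$. Nonemptiness of the set above, together with $\|M\| \le \sup_\compact \|M\|$, forces $|z| \ll n + |\ptwo z|$; hence contributing $z$ with $|\ptwo z| \le n$ satisfy $|z| \ll n$, and contributing $z$ with $|\ptwo z| > n$ satisfy $|z| \asymp |\ptwo z|$. At scale $|z| \le C 2^{j} n$ the relevant points lie in a \emph{centered} cube of side $\ll 2^{j} n$, so $(K,\alpha)$-mass boundedness bounds their number by $\ll 2^{j\alpha} n^\alpha$, while each contributes $\ll 2^{-jk}$ (and $\ll 1$ at $j = 0$). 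The total is therefore
\[\sum_{j \ge 0} 2^{j\alpha} n^\alpha \cdot 2^{-jk} = n^\alpha \sum_{j \ge 0} 2^{j(\alpha - k)} \ll_{\alpha} n^\alpha,\]
a convergent geometric series precisely because $\alpha < k$. The main obstacle is twofold: the convergence above is where $\alpha < k$ is indispensable (it is the quantitative form of the image being unable to exceed mass dimension $k$), and one must check that the union bound over preimages is not fatally lossy — it is acceptable here only because transversality makes the far points (large $|\ptwo z|$) cheap, so the sum is dominated by the $O(n^\alpha)$ points near the origin rather than by multiplicities. The one genuinely new ingredient beyond Lemma \ref{lem:bestgeometry} is the single-point landing estimate above, which I would extract from (or re-derive along the lines of) the proof of that lemma.
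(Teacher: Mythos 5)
Your proposal is correct and follows essentially the same route as the paper: a first-moment bound $\int_\compact \big|\lfloor \pmat E\rfloor \cap I_n^d\big|\,dm(M) \ll_{K,\alpha} n^{\alpha}$ followed by Markov's inequality, with the moment controlled by a dyadic decomposition of $E$ at scales $2^{j}n$, the transversality input, and the convergence of $\sum_j 2^{j(\alpha-k)}$, which is exactly where $\alpha<k$ enters in the paper as well. The only difference is organizational: the paper estimates the double sum $\sum_{y \in \Is_n^k}\sum_{z' \in E} m(\compact_{y,z'})$ with the image point on the outside, applying Lemma \ref{lem:bestgeometry} pairwise and paying the factor $|\Is_n^k| \ll n^k$, whereas you sum over preimages first via the landing estimate $m\big(\{M \in \compact \mid \lfloor \pmat z\rfloor \in I_n^d\}\big) \ll \min\big(1, n^k|\ptwo z|^{-k}\big)$ --- which in fact follows directly by summing Lemma \ref{lem:bestgeometry} over the $\ll n^k$ points $y \in \Is_n^k$ (using $|y-z| \geq |\ptwo z|$ since $\ptwo y = 0$), so no separate re-derivation of \eqref{eqn:easyexercise} is actually needed.
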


\begin{proof}
Let $\Is_n^k = I_n^k \times \{0\}^{d-k}$. The first step is to prove that for all $n \in \N$,
\begin{align}\label{eqn:keyfirststep}\sum_{z \in \Is_n^k} \sum_{z' \in E} m(\compact_{z,z'}) \ll_{K,\alpha} n^{\alpha}.\end{align}
Since the double sum is increasing in $n$, it suffices to prove the statement for all $n \geq 2 \sqrt{k}$. For $i \in \N$, set $I_i' = I_{2^i n}^d$ and $S_i' = I_i' \setminus I_{i-1}'$; note that $S_1' = I_1'$. Since $E$ is $(K,\alpha)$-mass bounded,
\begin{align}\label{eqn:eqn1}|E \cap S_i'| \leq |E \cap I_i'| \leq K \|I_i'\|^\alpha \ll_{K} \left(2^{ i } n \right)^\alpha.\end{align}
By interchanging sums and splitting the sum on $E$ over the shells, we may write
\[\sum_{z \in \Is_n^k} \sum_{z' \in E} m(\compact_{z,z'}) = \sum_{i = 1}^\infty \sum_{z' \in E \cap S_i'} \sum_{z \in \Is_n^k} m(\compact_{z,z'}).\]
When $i = 1$, by the definition of $\compact_{z,z'}$,
\[\sum_{z' \in E \cap S_1'} \sum_{z \in \Is_n^k} m(\compact_{z,z'}) \leq \sum_{z' \in E \cap S_1'} m(\compact) \leq m(\compact) \big|E \cap S_1' \big| \ll_{K,\alpha} n^\alpha.\]
Let $i \geq 2$. Note that each coordinate of a point $z' \in E \cap S_i'$ is at least $2^{i-2}n$ in magnitude. It follows that for all $z \in \Is^k_n$,
\[|z-z'| \geq 2^{i-3}n,\]
and so, by Lemma \ref{lem:bestgeometry},
\[m( \compact_{z,z'} ) \ll \left(2^i n \right)^{-k}.\]
Since $\alpha < k$,
\begin{align*}\
\sum_{i = 2}^\infty \sum_{z' \in E \cap S_i'} \sum_{z \in \Is_n^k} m(\compact_{z,z'}) &\ll \sum_{i = 2}^\infty \big|E \cap S_i' \big| \left|\Is_n^k \right| \left(2^i n \right)^{-k}\\
&\ll_K \sum_{i = 2}^\infty \left(2^{ i } n \right)^\alpha n^{k} \left(2^i n \right)^{-k}\\
&= n^\alpha \sum_{i = 2}^\infty \left(2^{\alpha-k} \right)^i \ll_{\alpha} n^\alpha.
\end{align*}
Along with the $i=1$ case, this establishes (\ref{eqn:keyfirststep}).

Now, define $f: \compact \to \R$ by
\[f(M) = \left| \lfloor \pmat E \rfloor \cap I_n^d \right| = \left| \lfloor \pmat E \rfloor \cap \Is_n^k \right| = \sum_{z \in \Is_n^k} \big[ z \in \lfloor \pmat E \rfloor \big],\]
where the Iverson brackets $[\textsc{expression}]$ return 1 if \textsc{expression} is true and 0 otherwise; this map is measurable by Lemma \ref{lem:measurablemap}. By the definition of $f$ and (\ref{eqn:keyfirststep}),
\[\|f\|_{L^1(\compact,m)} = \sum_{z \in \Is_n^k} m\left( \bigcup_{z' \in E} \compact_{z,z'} \right) \leq \sum_{z \in \Is_n^k} \sum_{z' \in E} m(\compact_{z,z'}) \ll_{K,\alpha} n^{\alpha}.\]
By Chebyshev's inequality,
\[m \big( \left \{ M \in \compact \ \middle| \ f(M) > n^{\alpha + \eps} \right\} \big) \leq n^{-(\alpha + \eps)} \|f\|_1 \ll_{K,\alpha} n^{-\eps}.\]
Finally, the left hand side of the previous expression is exactly the expression in the conclusion by taking logarithms.
\end{proof}

We now combine the lemmas above to prove Proposition \ref{prop:upperboundondim}.

\begin{proof}
Since $\pmat A \subseteq \R^k \times \{0\}^{d-k}$, we have $\mdim(\pmat A) \leq k$, so there is nothing more to show if $\mdim(A) \geq k$. Suppose $\mdim(A) < k$, and let $\mdim(A) < \alpha < k$. It suffices by Lemma \ref{lem:dimalongexpcubes} and the countable additivity of $m$ to show that for all $\eps > 0$, the set
\[L_{\eps} = \left\{ M \in \compact \ \middle| \ \limsup_{j \to \infty} \frac{\log \big| \lfloor \pmat A \rfloor \cap I_{2^j}^{d} \big|}{\log 2^j} > \alpha + \eps \right\}\]
has zero measure. By the definition of the limit supremum, for all $J \in \N$,
\[L_\eps(J) = \bigcup_{j \geq J} \left\{ M \in \compact \ \middle| \ \frac{\log \big| \lfloor \pmat A \rfloor \cap I_{2^j}^{d} \big|}{\log 2^j} > \alpha + \eps \right\} \supseteq L_{\eps},\]
so it suffices to show that $m\big(L_\eps (J)\big) \to 0$ as $J \to \infty$. Since $\alpha > \mdim(A)$, the set $A$ is $(K,\alpha)$-mass bounded for some $0 < K < \infty$. Since $\alpha < k$, Lemma \ref{lem:lemmaforupperbound} applies to give that
\[m \big( L_\eps (J) \big) \ll_{K,\alpha} \sum_{j \geq J} \left(2^j \right)^{-\eps}.\]
The right hand side is the tail of a convergent series, so it tends to $0$ as $J \to \infty$.
\end{proof}

The following example complements Theorem \ref{thm:bettermarstrandformass} by showing that the assumption $\cdim(A) = \mdim(A)$ is necessary to prevent a decrease in mass dimension under the typical projection. As before, for $\lambda \in \R$, let $P_\lambda: \R^2 \to \R^2$ be the projection with range $\R \times \{0\}$ and null space $\big\langle (\lambda, -1) \big\rangle$.

\begin{example}
Let $0 < \alpha < 1$. There exists a set $A \subseteq \R^2$ with $\mdim(A) = 2\alpha$ with the property that for all $\lambda \in \R$, $\mdim(P_\lambda A) = \alpha$. To construct such a set, for each $\ell \in \N$, consider the cube
\[A_\ell = \big[\ell-\ell^\alpha,\ell \big) \times \big[0,\ell^\alpha \big).\]
Consider a sequence $(\ell_n)_n$ of positive real numbers which increases very rapidly, and let $A = \cup_n A_{\ell_n}$. The set $A \subseteq \R^2$ achieves a mass dimension of at least $2 \alpha$ along the sequence of centered cubes with side lengths $\ell_n$, and if $(\ell_n)_n$ is increasing rapidly enough, $A$ will have mass dimension $2 \alpha$. Moreover, for each fixed $\lambda \in \R$, the set $P_\lambda A$ is a union of intervals, one of length approximately $(\ell_n)^\alpha$ at a distance of approximately $\ell_n$ away from the origin for each $n \in \N$. If the sequence $(\ell_n)_n$ is increasing rapidly enough, the set $P_\lambda A$ achieves a mass dimension of $\alpha$.
\end{example}

\subsection{Further Marstrand-type results}\label{sec:furtherresults}

In this section, we derive the analogous Marstrand-type results for projections with a fixed range $R \subseteq \R^d$ and orthogonal projections with range of dimension $k$. Each of these results requires that certain maps be measurable and non-singular with respect to the measures described in Section \ref{sec:bettermeasuresonspaces}.

\begin{lemma}\label{lem:measurablemap}
Let $A \subseteq \R^d$. The maps $\linmaps (\R^d)$ to $\R$ defined by $T \mapsto \cmeas\alpha(TA)$ and $T \mapsto \cdim(TA)$ are measurable.

The same conclusions hold for the maps involving the mass measures and dimension.
\end{lemma}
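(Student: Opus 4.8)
The plan is to express each of the four maps as a countable combination of maps of the form $T \mapsto \big|\lfloor TA \rfloor \cap Q\big|$ for fixed finite boxes $Q \subseteq \Z^d$, and then to establish measurability of these building blocks. First I would reduce the dimension maps to the measure maps: since $\cdim(TA) = \inf\{\alpha \ge 0 : \cmeas{\alpha}(TA) = 0\}$ and $\cmeas{\alpha}(TA)$ is monotone in $\alpha$ (Lemma \ref{lem:basiccountingprops}), one has $\{T : \cdim(TA) < s\} = \bigcup_{t < s}\bigcap_{\alpha > t}\{T : \cmeas{\alpha}(TA) = 0\}$ with $t,\alpha$ ranging over the rationals, so measurability of $T \mapsto \cmeas{\alpha}(TA)$ for all $\alpha$ yields measurability of $T \mapsto \cdim(TA)$, and likewise for $\mdim$.

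Next I would reduce the limit supremum over all cubes to integer data. Because $\lfloor TA\rfloor \subseteq \Z^d$, the quantity $\sup_{\|C\| = s}\big|\lfloor TA\rfloor \cap C\big|$ equals the largest number of points of $\lfloor TA\rfloor$ that fit in a closed integer-aligned cube of side $\lceil s\rceil - 1$; writing $H_n(T) = \sup_{b \in \Z^d}\big|\lfloor TA\rfloor \cap Q_b^n\big|$ for this count, where $Q_b^n = \prod_{i}\{b_i, \dots, b_i + n\}$, a short monotonicity computation (the points fitting in a half-open cube of side $s$ are exactly those with all coordinate-spans $< s$, and the spans are integers) gives the exact identity
\[\cmeas{\alpha}(TA) = \limsup_{n \to \infty} \frac{H_n(T)}{n^\alpha}.\]
For the mass measure the same argument applied to the centered cubes $[-n,n)^d$ gives $\mmeas{\alpha}(TA) = \limsup_{n} \big|\lfloor TA\rfloor \cap [-n,n)^d\big|\big/(2n)^\alpha$, now with a single box at each scale in place of the supremum over translates. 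Thus all four maps are assembled by countable suprema, infima, and limits from the maps $T \mapsto \big|\lfloor TA\rfloor \cap Q\big|$ with $Q$ a finite integer box, and $H_n$ is itself a countable supremum over $b \in \Z^d$.

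It then remains to show that for each fixed $z \in \Z^d$ the set $B_z = \{T \in \linmaps(\R^d) : z \in \lfloor TA\rfloor\}$ is measurable, since $\big|\lfloor TA\rfloor \cap Q\big| = \sum_{z \in Q} \mathbf{1}_{B_z}(T)$ is then a finite sum of measurable functions. I would write $B_z = \bigcup_{a \in A}\{T : Ta \in Q_z\}$, where $Q_z = \prod_{i}[z_i,z_i+1)$ is the half-open unit cell at $z$. Each set $\{T : Ta \in Q_z\}$ is Borel, being the preimage of the Borel set $Q_z$ under the continuous (indeed linear) evaluation $T \mapsto Ta$. When $A$ is countable -- which is the only case needed in the applications, since by Corollary \ref{cor:qiecorone} one always reduces to sets $A \subseteq \Z^d$ -- this exhibits $B_z$ as a countable union of Borel sets, hence Borel, and the entire construction then shows that all four maps are Borel measurable.

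The main obstacle is the passage to an arbitrary, uncountable $A$, where $B_z$ is an uncountable union of Borel sets and need not be Borel. To handle this I would fix a countable dense subset $A_0 \subseteq A$ and compare. Since $T \mapsto Ta$ is continuous and $A_0$ is dense in $A$, the set $TA_0$ is dense in $TA$, so $\lfloor TA_0\rfloor \subseteq \lfloor TA\rfloor$ and the two can differ only at integer points $z$ for which $TA$ meets $Q_z$ solely along its lower boundary faces, i.e.\ where some coordinate of $Ta$ is an integer. The map $T \mapsto \cmeas{\alpha}(TA_0)$ is Borel by the countable case, so the crux is to show that these boundary contributions do not alter the limit supremum for Lebesgue-almost every $T$, whence $T \mapsto \cmeas{\alpha}(TA)$ agrees off a null set with a Borel function and is therefore Lebesgue measurable. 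Controlling this boundary set is the delicate point: for a single $a$ the offending transformations $\{T : (Ta)_i \in \Z\}$ form a null union of hyperplanes, but the union over $a \in A$ is uncountable, so the argument must exploit that such boundary cells carry no extra weight in the density computation. The mass statements follow by the identical scheme applied to centered cubes.
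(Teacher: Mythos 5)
Your discretization to the building blocks $T \mapsto \big|\lfloor TA\rfloor \cap Q\big|$ and your treatment of countable $A$ are sound and follow the same skeleton as the paper's proof. The problem is the uncountable case, which you correctly flag as the obstacle but do not close, so the proposal is not yet a proof of the stated lemma. Moreover, the route you sketch for closing it is unlikely to succeed for the measure maps: density of $A_0$ in $A$ gives $\hdist(TA_0,TA)=0$, and Corollary \ref{cor:qiecorone} then yields only two-sided bounds $\cmeas{\alpha}(TA_0)\asymp\cmeas{\alpha}(TA)$ with multiplicative constants bounded away from $1$ (the rounding to $\Z^d$ prevents the constants from improving as the Hausdorff distance shrinks). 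That comparability does give exact equality $\cdim(TA)=\cdim(TA_0)$ and $\mdim(TA)=\mdim(TA_0)$ for \emph{every} $T$, hence Borel measurability of the two dimension maps, and it also gives measurability of the sets $\{T:\cmeas{\alpha}(TA)>0\}$ and $\{T:\cmeas{\alpha}(TA)<\infty\}$, which is all that the Marstrand-type corollaries actually use; but it does not give measurability of $T\mapsto\cmeas{\alpha}(TA)$ itself. The almost-everywhere equality you would need is not within reach by your argument: the exceptional set $\{T:(Ta)_i\in\Z\text{ for some }i\}$ is null for each fixed $a$, but the union over an uncountable $A$ is exactly the kind of object countable additivity cannot control, and you offer no substitute.

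The paper avoids enumerating $A$ altogether, and this is the idea you are missing. After the same reduction to $T\mapsto\big[\,TA\cap C\neq\emptyset\,\big]$ for a unit cell $C$, it decomposes the half-open cell into disjoint open and closed pieces $E$ and argues topologically: for open $E$ the set $\{T:TA\cap E\neq\emptyset\}$ is open (continuity of $T\mapsto Ta$), and for closed $E$ the paper asserts the complementary set is open; either way each indicator is Borel with no cardinality hypothesis on $A$, and a finite maximum over the pieces finishes the job. If you adopt this route, be aware that the closed-set half of that dichotomy itself deserves scrutiny when $A$ is unbounded --- a sequence $T_n\to T$ can push distant points of $A$ into a fixed bounded closed set even though $TA$ misses it --- so the open-set half is the robust part of the device, and the decomposition of the cell should be arranged to lean on it as much as possible.
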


\begin{proof}
Recall that
\[\cmeas\alpha(TA) = \lim_{\ell \rightarrow \infty} \sup_{\|C\| \geq \ell} \frac{\big| \lfloor TA \rfloor \cap C \big|}{\|C\|^\alpha}.\]
By taking the limit $\ell \rightarrow \infty$ along $\ell \in \N$ and considering only those cubes with integral side length based at points in $\Z^d$, it suffices to show that for a fixed such cube $C$, the map
\[T \mapsto \big| \lfloor TA \rfloor \cap C \big|\]
is measurable. If $C = \cup_i C_i$ is a partition of $C$ into unit cubes, then $\big| \lfloor TA \rfloor \cap C \big| = \sum_i \big| \lfloor TA \rfloor \cap C_i \big|$. Therefore, it suffices to assume that $C$ is a unit cube and prove that $T \mapsto \big| \lfloor TA \rfloor \cap C \big| = \big[ TA \cap C \neq \emptyset \big]$ is measurable (here, $[\, \cdot \, ]$ denotes the Iverson brackets).

If $E \subseteq \R^d$ is open (resp. closed), then the preimage of 1 (resp. 0) under $T \mapsto \big[ TA \cap E \neq \emptyset \big]$ is open, hence measurable. It follows that $T \mapsto \big[ TA \cap E \neq \emptyset \big]$ is measurable. Since the cube $C$ is a union of disjoint open and closed sets, the map $T \mapsto \big[ TA \cap C \neq \emptyset \big]$ is measurable.

That $T \mapsto \cdim(TA)$ is measurable follows from the same work using the explicit formula (\ref{eqn:explicitformula}) for $\cdim(TA)$ from Lemma \ref{lem:basiccountingprops}.
\end{proof}

\begin{definition}
A measurable map $\varphi: (X,\mathcal{B},\mu) \longrightarrow (Y,\mathcal{C},\nu)$ is \emph{non-singular} if for all $C \in \mathcal{C}$, $\nu(C) = 0$ if and only if $\mu(\varphi^{-1} C) = 0$. Equivalently, $\varphi$ is non-singular if $\varphi_\# \mu$ is equivalent to $\nu$, where of $\varphi_\# \mu$ is the push-forward of $\mu$ to $Y$ through $\varphi$.
\end{definition}

In order to deduce analogues of the main Marstrand-type theorems for the space of projections $\proj dR$, we will rotate the subspace $R$ to the subspace $\R^k \times \{0\}^{d-k}$. The following lemma gives that the identification of good projections before and after this rotation is non-singular.

\begin{lemma}\label{lem:measmapslemma1}
Let $R \subseteq \R^d$ be a linear subspace of $\R^d$ of dimension $k$, $1 \leq k \leq d-1$. Let $B = \{b_i\}_{i=1}^d$ be a basis of $\R^d$ whose first $k$ vectors span $R$, and let $T$ be the linear map defined by $Te_i = b_i$, $1 \leq i \leq d$. The map
\begin{gather*}
\mynextmap dR: \proj dR \longrightarrow \proj dk \\
P \mapsto T^{-1} \circ P \circ T
\end{gather*}
is measurable and non-singular.
\end{lemma}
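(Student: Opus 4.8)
The plan is to treat the two assertions separately, with measurability being essentially formal and non-singularity requiring the Grassmannian picture from Section \ref{sec:bettermeasuresonspaces}. First I would record that $\mynextmap dR$ really does land in $\proj dk$: if $P^2 = P$ then $(T^{-1}PT)^2 = T^{-1}P^2 T = T^{-1}PT$, and $\rangespace{T^{-1}PT} = T^{-1}\rangespace{P} = T^{-1}R = \R^k \times \{0\}^{d-k}$, since $T$ carries $\mathrm{span}(e_1,\dots,e_k) = \R^k \times \{0\}^{d-k}$ onto $\mathrm{span}(b_1,\dots,b_k) = R$. Measurability is then immediate: the conjugation $S \mapsto T^{-1}ST$ is a linear, hence continuous, self-map of $\linmaps(\R^d)$, and $\mynextmap dR$ is merely its restriction to the Borel subset $\proj dR$; so $\mynextmap dR$ is Borel measurable (indeed a homeomorphism onto $\proj dk$, with continuous inverse $Q \mapsto TQT^{-1}$).

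For non-singularity I would pass through the parameterizations by null spaces. Under the bijections $\proj dR \leftrightarrow G_R(d,d-k)$ and $\proj dk \leftrightarrow G_{\R^k\times\{0\}}(d,d-k)$ given by $P \mapsto \nullspace{P}$ — which, by the very definitions in Section \ref{sec:bettermeasuresonspaces}, are measure isomorphisms onto the respective restrictions of $\gamma_{d,d-k}$ — the map $\mynextmap dR$ becomes a map between open subsets of the Grassmannian. The key computation is that $\nullspace{T^{-1}PT} = T^{-1}\nullspace{P}$, so the induced map is exactly $\Phi \colon V \mapsto T^{-1}V$, the action of $T^{-1} \in GL(d)$ on $G(d,d-k)$. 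Consequently, non-singularity of $\mynextmap dR$ with respect to $\projm dR$ and $\projm dk$ is \emph{equivalent} to non-singularity of $\Phi$ with respect to the corresponding restrictions of $\gamma_{d,d-k}$.

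It then remains to check that $\Phi$ is non-singular. The action of any element of $GL(d)$ on $G(d,d-k)$ is a diffeomorphism, so $\Phi$ restricts to a diffeomorphism of the open set $G_R(d,d-k)$ onto $G_{\R^k\times\{0\}}(d,d-k)$; the image is precisely this set because $V \cap R = \{0\}$ if and only if $T^{-1}V \cap (\R^k \times \{0\}^{d-k}) = \{0\}$. A diffeomorphism carries smooth-measure-class null sets to smooth-measure-class null sets in both directions, and, as recalled in Section \ref{sec:bettermeasuresonspaces}, $\gamma_{d,d-k}$ lies in the smooth measure class of $G(d,d-k)$. Hence $\Phi_\#\big(\gamma_{d,d-k}|_{G_R(d,d-k)}\big)$ and $\gamma_{d,d-k}|_{G_{\R^k\times\{0\}}(d,d-k)}$ have the same null sets, which is exactly the non-singularity of $\Phi$, and therefore of $\mynextmap dR$.

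The only real content, the rest being bookkeeping along the chain of identifications, is the assertion that a diffeomorphism of the Grassmannian preserves the smooth measure class. I expect this to be the main point to get right, and I would handle it either by invoking the explicit charts of Section \ref{sec:bettermeasuresonspaces} and noting that $\Phi$ is given in those coordinates by a smooth map with smooth inverse (so the change-of-variables formula keeps Lebesgue-negligible sets negligible), or by citing the general fact that diffeomorphisms preserve Lebesgue-negligibility in charts. The subtlety worth flagging is that $T$ need not be orthogonal, so $\Phi$ need not \emph{preserve} $\gamma_{d,d-k}$; we use only equivalence of measures, for which smoothness of $\Phi$ and $\Phi^{-1}$ suffices.
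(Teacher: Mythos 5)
Your proposal is correct, and the measurability half coincides with the paper's (conjugation by $T$ is continuous, hence Borel). For non-singularity, however, you take a genuinely different route. The paper stays inside the flat parameterization of Section \ref{sec:bettermeasuresonspaces}: it introduces the map $\myrestnextmap dR$ on $\matspace k{(d-k)}$ making the square with the charts $\purerestmatmap B$ and $\purerestmatmape$ commute, and computes---using $\matmape T = B$---that $\myrestnextmap dR(M) = M$ is the \emph{identity}, so non-singularity follows at once from the already-established equivalence of the chart measures with $\projm dR$ and $\projm dk$. You instead pass to the Grassmannian via null spaces, identify the induced map as the $GL(d)$-action $V \mapsto T^{-1}V$ on $G(d,d-k)$, and invoke the fact that a diffeomorphism preserves the smooth measure class containing $\gamma_{d,d-k}$. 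Your supporting computations are right: $\nullspace{T^{-1}PT} = T^{-1}\nullspace{P}$, the image of $G_R(d,d-k)$ under $V \mapsto T^{-1}V$ is exactly $G_{\R^k\times\{0\}^{d-k}}(d,d-k)$, and you correctly flag that since $T$ need not be orthogonal one can only expect equivalence, not preservation, of $\gamma_{d,d-k}$. What each approach buys: yours is more conceptual and would generalize immediately to any map induced by a $GL(d)$-action, but it leans on the (standard, nontrivial) fact that diffeomorphisms preserve smooth-measure-class null sets; the paper's chart computation is more elementary, reducing everything to the triviality that the identity map is non-singular. Your stated fallback---writing $\Phi$ in the explicit charts and checking the change of variables---would in this instance essentially reproduce the paper's computation, since in those coordinates $\Phi$ \emph{is} the identity.
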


\begin{proof}
The map $\mynextmap dR$ is continuous, hence it is measurable. Let $\myrestnextmap dR: \matspace k{(d-k)} \rightarrow \matspace k{(d-k)}$ be the map making the following diagram commutative:

\[\begin{CD}
\proj dR @>\mynextmap dR>> \proj dk\\
@VV \purerestmatmap B V @VV \purerestmatmape V\\
\matspace k{(d-k)} @>\myrestnextmap dR>>\matspace k{(d-k)}
\end{CD}\]
\vskip .2cm

It suffices to prove that $\myrestnextmap dR$ is non-singular. It follows from the definition of $\mynextmap dR$ that $\myrestnextmap dR$ is defined for $M \in \matspace k{(d-k)}$ by the following equation (all matrices are $d \times d$):
\begin{center}
$\matmape {T^{-1}} B \left( \begin{tabular}{c|c}
$\text{Id}_{k\times k}$ & $M$ \\
\hline
0 & 0
\end{tabular} \right) B^{-1} \matmape T = \left( \begin{tabular}{c|c}
$\text{Id}_{k\times k}$ & $\myrestnextmap dR (M)$ \\
\hline
0 & 0
\end{tabular} \right)$,
\end{center}
where, by a slight abuse of notation, $B$ is the matrix with columns $b_1, \ldots, b_d$. By the definition of $T$, $[T] = B$, and so $\myrestnextmap dR (M) = M$, which is clearly non-singular.
\end{proof}

\begin{cor}\label{cor:obliquemarstrand}
Let $A \subseteq \R^d$ and $R \subseteq \R^d$ be a linear subspace of dimension $k$. If $\cmeas \alpha (A) > 0$ for some $\alpha \neq k$, then for almost every $P \in \proj dR$, $\cmeas {\min(k,\alpha)} (PA) > 0$. In particular, for almost every $P \in \proj dR$,
\[\cdim \big( PA \big) \geq \min \big(k, \cdim(A) \big)\]
and, if $\cdim(A) > k$, then $\cmeas {k} (PA) > 0$.
\end{cor}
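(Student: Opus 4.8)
The plan is to transfer Theorem \ref{thm:bettermarstrand}, which concerns the space $\proj dk$, to the space $\proj dR$ by conjugating projections with the rotation $T$ from Lemma \ref{lem:measmapslemma1} and invoking two facts established earlier: the non-singularity of the induced map $\mynextmap dR \colon \proj dR \to \proj dk$ (Lemma \ref{lem:measmapslemma1}) and the invariance of the counting measures under invertible linear maps (Corollary \ref{cor:qiecortwo}). Concretely, I would fix the basis $B = \{b_i\}$ whose first $k$ vectors span $R$, let $T$ be the automorphism $e_i \mapsto b_i$, and recall that $\mynextmap dR(P) = T^{-1} \circ P \circ T$. Inverting this relation gives $P = T \circ \mynextmap dR(P) \circ T^{-1}$, so that setting $A_0 = T^{-1} A$ we obtain $PA = T\big(\mynextmap dR(P)\, A_0\big)$.

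The heart of the argument is then a double application of Corollary \ref{cor:qiecortwo}. Since $T^{-1}$ is an automorphism of $\R^d$, that corollary gives $\cmeas\alpha(A_0) \asymp \cmeas\alpha(A)$, so $\cmeas\alpha(A_0) > 0$ with $\alpha \neq k$; and since each set $\mynextmap dR(P)\, A_0$ lies in $\R^k \times \{0\}^{d-k}$, applying the corollary to the restriction of $T$ to that subspace yields $\cmeas{\min(k,\alpha)}(PA) \asymp \cmeas{\min(k,\alpha)}\big(\mynextmap dR(P)\, A_0\big)$, so positivity of one side is equivalent to positivity of the other. Now I would apply Theorem \ref{thm:bettermarstrand} to $A_0$: the set $G = \{ Q \in \proj dk \mid \cmeas{\min(k,\alpha)}(Q A_0) > 0 \}$ has full $\projm dk$-measure and is measurable by Lemma \ref{lem:measurablemap}. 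Because $\mynextmap dR$ is measurable and non-singular, the null set $\proj dk \setminus G$ pulls back to a $\projm dR$-null set, so $\mynextmap dR^{-1}(G)$ has full $\projm dR$-measure. For every $P$ in this full-measure set, $\mynextmap dR(P) \in G$ gives $\cmeas{\min(k,\alpha)}\big(\mynextmap dR(P)\, A_0\big) > 0$, and the equivalence above yields $\cmeas{\min(k,\alpha)}(PA) > 0$, which is the first assertion.

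The ``in particular'' statements then follow from the first assertion by the same reasoning used in Theorem \ref{thm:bettermarstrand}: applying it along a sequence $\alpha_j \uparrow \cdim(A)$ with $\alpha_j \neq k$ (where $\cmeas{\alpha_j}(A) = \infty$ by Lemma \ref{lem:basiccountingprops}) and intersecting the resulting countably many full-measure sets gives $\cdim(PA) \geq \min(k, \cdim(A))$ for almost every $P$, while if $\cdim(A) > k$ one takes any $\alpha$ with $k < \alpha < \cdim(A)$, for which $\min(k,\alpha) = k$, to get $\cmeas k(PA) > 0$ almost everywhere. I expect the only delicate point to be bookkeeping rather than any genuine analytic difficulty: one must be careful to invoke Corollary \ref{cor:qiecortwo} for $T$ restricted to the correct subspace $\R^k \times \{0\}^{d-k}$ (so that the positivity of the counting measure transfers through the conjugation), and to pull the full-measure good set back through the non-singular map $\mynextmap dR$ rather than push it forward.
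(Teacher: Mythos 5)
Your proposal is correct and follows essentially the same route as the paper: conjugate by the change-of-basis map $T$, transfer positivity of the counting measures through Corollary \ref{cor:qiecortwo}, apply Theorem \ref{thm:bettermarstrand} to $T^{-1}A$, and pull the full-measure good set back through the non-singular map $\mynextmap dR$ of Lemma \ref{lem:measmapslemma1}. The only (immaterial) difference is that the paper phrases the transfer as $T^{-1}PA = T^{-1}PT\,T^{-1}A$ rather than $PA = T(\mynextmap dR(P)A_0)$, and it dispenses with the trivial cases $k=0,d$ explicitly at the outset.
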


\begin{proof}
If $k = 0$ or $d$, the conclusion is immediate, so suppose $1 \leq k \leq d-1$. Let $\{b_1,\ldots,b_d\}$ be a basis for $\R^d$ whose first $k$ vectors span $R$, and let $T$ be the linear map defined by $Te_i = b_i$, $1 \leq i \leq d$.

Suppose $\cmeas \alpha (A) > 0$ for some $\alpha \neq k$. Since $T$ is an invertible linear transformation, by Corollary \ref{cor:qiecortwo}, we have $\cmeas \alpha (T^{-1}A) > 0$. Set
\begin{align*}
\goo_R &= \big\{ P \in \proj dR \ \big| \ \cmeas {\min(k,\alpha)} (PA) > 0 \big\},\\
\goo &= \big\{ P \in \proj dk \ \big| \ \cmeas {\min(k,\alpha)} \big(PT^{-1}A \big) > 0 \big\}.
\end{align*}
The sets $\goo_R$ and $\goo$ are measurable by Lemma \ref{lem:measurablemap}, and $\goo$ is of full measure in $\proj dk$ by Theorem \ref{thm:bettermarstrand}.

Let $\mynextmap dR: \proj dR \rightarrow \proj dk$ be as in Lemma \ref{lem:measmapslemma1}. Since $\mynextmap dR$ is non-singular and $\goo$ is of full measure, to show that $\goo_R$ is of full measure, it suffices to prove that $\mynextmap dR ^{-1} \goo \subseteq \goo_R$.

If $P \in \mynextmap dR ^{-1} \goo$, then $T^{-1} \circ P \circ T \in \goo$, meaning
\[\cmeas {\min(k,\alpha)} \big(T^{-1}P A \big) = \cmeas {\min(k,\alpha)} \big(T^{-1}P T T^{-1}A \big) > 0.\]
By Corollary \ref{cor:qiecortwo}, this implies that $\cmeas {\min(k,\alpha)} \big(P A \big) > 0$, meaning $P \in \goo_R$.
\end{proof}

The proof of the analogous corollary for the mass dimension follows in exactly the same way from Theorem \ref{thm:bettermarstrandformass} and Corollary \ref{cor:qiecortwo}.

\begin{cor}\label{cor:obliquemarstrandformass}
Let $A \subseteq \R^d$ be such that $\cdim(A) = \mdim(A)$, and let $R \subseteq \R^d$ be a linear subspace of dimension $k$. For almost every $P \in \proj dR$,
\[\mdim\big( PA \big) = \min \big(k, \mdim(A) \big)\]
and, if $A$ is counting and mass regular and $\mdim(A) \neq k$, then $\mmeas{\min(k,\mdim(A))}(P A) > 0$.
\end{cor}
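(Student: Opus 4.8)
The plan is to transcribe the proof of Corollary \ref{cor:obliquemarstrand} verbatim, replacing the counting measures and dimension everywhere by their mass counterparts and invoking Theorem \ref{thm:bettermarstrandformass} in place of Theorem \ref{thm:bettermarstrand}. The cases $k = 0$ and $k = d$ give the conclusion immediately, so I would fix $1 \le k \le d-1$, choose a basis $\{b_1, \dots, b_d\}$ of $\R^d$ whose first $k$ vectors span $R$, and let $T$ be the invertible linear map defined by $Te_i = b_i$ for $1 \le i \le d$. Everything then hinges on passing the hypotheses of Theorem \ref{thm:bettermarstrandformass} to the set $A' = T^{-1}A$.

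First I would verify that these hypotheses survive the reduction to $A'$. Since $T^{-1}$ is an invertible linear transformation, Corollary \ref{cor:qiecortwo} (and, crucially, its ``same conclusions hold for the mass measures and dimension'' clause) shows that $T^{-1}$ preserves, up to multiplicative constants, both the counting and the mass measures, and hence both dimensions. Therefore $\cdim(A') = \cdim(A) = \mdim(A) = \mdim(A')$, the value $\min(k, \mdim(A'))$ equals $\min(k, \mdim(A))$, the set $A'$ is counting (resp.\ mass) regular exactly when $A$ is, and $\mdim(A') \ne k$ whenever $\mdim(A) \ne k$. Consequently Theorem \ref{thm:bettermarstrandformass} applies to $A'$ and furnishes a full-measure set $\goo \subseteq \proj dk$ on which $\mdim(PA') = \min(k,\mdim(A))$ and, in the case that $A$ is counting and mass regular with $\mdim(A) \ne k$, additionally $\mmeas{\min(k,\mdim(A))}(PA') > 0$. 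Let $\goo_R \subseteq \proj dR$ be the set defined by the same one or two conditions with $A'$ replaced by $A$; it is measurable by Lemma \ref{lem:measurablemap}, composing the inclusion $\proj dR \hookrightarrow \linmaps(\R^d)$ with the measurable maps $T \mapsto \mdim(TA)$ and $T \mapsto \mmeas{\alpha}(TA)$.

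Next I would invoke the non-singular map $\mynextmap dR : \proj dR \to \proj dk$, $P \mapsto T^{-1}PT$, from Lemma \ref{lem:measmapslemma1}. Because $\mynextmap dR$ is non-singular and $\goo$ has full measure, it suffices to show $(\mynextmap dR)^{-1}\goo \subseteq \goo_R$, for then $\goo_R$ has full measure as well. If $P \in (\mynextmap dR)^{-1}\goo$, then $T^{-1}PT \in \goo$; since $(T^{-1}PT)A' = T^{-1}PA$, this reads $\mdim(T^{-1}PA) = \min(k,\mdim(A))$ and, in the regular case, $\mmeas{\min(k,\mdim(A))}(T^{-1}PA) > 0$. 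Applying Corollary \ref{cor:qiecortwo} once more to the invertible $T$ — now transferring the mass dimension and mass measure of $T^{-1}PA$ back to $PA = T(T^{-1}PA)$ — yields $\mdim(PA) = \min(k,\mdim(A))$ together with $\mmeas{\min(k,\mdim(A))}(PA) > 0$, i.e.\ $P \in \goo_R$. I do not anticipate a real obstacle: the argument is a direct analogue of the counting case, and the only point demanding genuine care is that \emph{both} the counting and mass structure of $A$ are preserved by the invertible map $T$, so that the joint hypothesis $\cdim = \mdim$ and the simultaneous counting-and-mass regularity required by Theorem \ref{thm:bettermarstrandformass} remain intact for $A'$ — precisely what the mass clause of Corollary \ref{cor:qiecortwo} guarantees.
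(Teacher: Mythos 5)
Your proposal is correct and matches the paper exactly: the paper dispatches this corollary with the single remark that it ``follows in exactly the same way from Theorem \ref{thm:bettermarstrandformass} and Corollary \ref{cor:qiecortwo},'' and your write-up simply carries out that substitution in detail, correctly noting that the mass clause of Corollary \ref{cor:qiecortwo} is what lets the joint hypotheses ($\cdim = \mdim$ and simultaneous counting-and-mass regularity) pass to $T^{-1}A$ and back.
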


In order to deduce analogues of the main Marstrand-type theorems for orthogonal projections, we will associate an oblique projection $P$ with range $\R^k \times \{0\}^{d-k}$ to the orthogonal projection with range $\nullspace P ^\perp$. The images of a set under $P$ and the associated orthogonal projection differ by an invertible linear transformation; Corollary \ref{cor:qiecortwo} applies to give that the dimensions of these images are equal.

\begin{lemma}\label{lem:equivparam}
Let $0 \leq k \leq d$. The map $\obtoorth: \proj dk \longrightarrow \oproj dk$ sending $P$ to the orthogonal projection with range $\nullspace P ^\perp$ is a measurable, non-singular bijection (modulo null sets).
\end{lemma}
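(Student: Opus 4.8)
The plan is to transport everything to the Grassmannian picture set up in Section~\ref{sec:bettermeasuresonspaces} and reduce the lemma to elementary properties of the orthogonal-complement map. Write $R = \R^k \times \{0\}^{d-k}$ and $R^\perp = \{0\}^k \times \R^{d-k}$. Under the correspondence $P \mapsto \nullspace P$ identifying $\proj dk$ with $G_R(d,d-k)$ and the correspondence $U \mapsto \rangespace U$ identifying $\oproj dk$ with $G(d,k)$, the map $\obtoorth$ becomes the orthogonal-complement map $V \mapsto V^\perp$ restricted to $G_R(d,d-k)$, since by definition $\obtoorth(P)$ is the orthogonal projection with range $\nullspace P ^\perp$. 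Each correspondence is a homeomorphism (the null-space and range maps and their inverse oblique/orthogonal projection formulas are continuous), and $V \mapsto V^\perp$ is a smooth diffeomorphism of $G(d,d-k)$ onto $G(d,k)$; hence $\obtoorth$ is continuous, and therefore measurable.

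For the bijection-modulo-null-sets claim I would first identify the image. Since $\dim V + \dim R = d$, for $V \in G(d,d-k)$ we have $V \cap R = \{0\}$ if and only if $V + R = \R^d$, and taking orthogonal complements via $(V+R)^\perp = V^\perp \cap R^\perp$ shows this is equivalent to $V^\perp \cap R^\perp = \{0\}$. Thus $V \mapsto V^\perp$ carries $G_R(d,d-k)$ bijectively onto $G_{R^\perp}(d,k) = \{W \in G(d,k) \mid W \cap R^\perp = \{0\}\}$. By the same fact cited in Section~\ref{sec:bettermeasuresonspaces} (Mattila, Chapter 3), $G_{R^\perp}(d,k)$ is an open subset of $G(d,k)$ of full $\gamma_{d,k}$ measure. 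So $\obtoorth$ is a continuous bijection of $\proj dk$ onto the subset of $\oproj dk$ corresponding to $G_{R^\perp}(d,k)$, which has full $\oprojm dk$ measure; this is precisely the assertion that $\obtoorth$ is a bijection modulo null sets.

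For non-singularity I would invoke the uniqueness of the invariant measure on the Grassmannian. The map $V \mapsto V^\perp$ is $O(d)$-equivariant, that is $(gV)^\perp = g(V^\perp)$ for all $g \in O(d)$, so the pushforward of the $O(d)$-invariant probability measure $\gamma_{d,d-k}$ is an $O(d)$-invariant probability measure on $G(d,k)$ and hence equals $\gamma_{d,k}$. Therefore $V \mapsto V^\perp$ is a measure-preserving isomorphism of $(G(d,d-k),\gamma_{d,d-k})$ with $(G(d,k),\gamma_{d,k})$; restricted to the full-measure open sets $G_R(d,d-k)$ and $G_{R^\perp}(d,k)$ it remains a measure isomorphism up to null sets, in particular non-singular. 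Since $\projm dk$ and $\oprojm dk$ are by definition the transports of $\gamma_{d,d-k}$ and $\gamma_{d,k}$ through the two Grassmannian correspondences, the non-singularity of $\obtoorth$ follows.

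The continuity statements and the identity $(V+R)^\perp = V^\perp \cap R^\perp$ are routine. The point that requires care is the measure-theoretic bookkeeping: establishing that the perp map matches $\gamma_{d,d-k}$ with $\gamma_{d,k}$ through $O(d)$-equivariance and uniqueness, and checking that passing to the full-measure charts $G_R(d,d-k)$ and $G_{R^\perp}(d,k)$ disturbs neither the bijectivity-modulo-null-sets nor the non-singularity. I expect the equivariance-and-uniqueness step to be the crux, with everything else formal.
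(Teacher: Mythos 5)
Your proof is correct and follows essentially the same route as the paper's: the paper's entire proof is the observation that, under the Grassmannian identifications of Section~\ref{sec:bettermeasuresonspaces}, $\obtoorth$ becomes the map $V \mapsto V^\perp$, which is a measure-preserving (hence non-singular) bijection between $G(d,d-k)$ and $G(d,k)$. You have simply supplied the details the paper leaves implicit---the $O(d)$-equivariance and uniqueness argument for measure preservation, the identification of the image as the full-measure open set of $k$-planes meeting $R^\perp$ trivially, and the restriction to full-measure charts---all of which check out.
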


\begin{proof}
This is immediate from the discussion in Section \ref{sec:bettermeasuresonspaces} and the fact that $V \mapsto V^\perp$ is a measure preserving (hence, non-singular) bijection between $G(d,k)$ and $G(d,d-k)$.
\end{proof}

\begin{cor}\label{cor:orthogmarstrand}
Let $A \subseteq \R^d$ and $0 \leq k \leq d$. If $\cmeas \alpha (A) > 0$ for some $\alpha \neq k$, then for almost every $U \in \oproj dk$, $\cmeas {\min(k,\alpha)} (UA) > 0$. In particular, for almost every $U \in \oproj dk$,
\[\cdim \big( UA \big) \geq \min \big(k, \cdim(A) \big)\]
and, if $\cdim(A) > k$, then $\cmeas k (UA) > 0$.
\end{cor}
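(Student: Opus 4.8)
The plan is to transfer the full-measure conclusion of Theorem \ref{thm:bettermarstrand} from the space of oblique projections $\proj dk$ to the space of orthogonal projections $\oproj dk$ by means of the non-singular bijection $\obtoorth$ from Lemma \ref{lem:equivparam}, in exact parallel to the way Corollary \ref{cor:obliquemarstrand} was deduced from Theorem \ref{thm:bettermarstrand} using the map $\mynextmap dR$. First I would dispose of the trivial cases $k = 0$ and $k = d$, assume $1 \leq k \leq d-1$, and suppose $\cmeas\alpha(A) > 0$ for some $\alpha \neq k$. I would then introduce the good sets
\[G = \big\{ P \in \proj dk \ \big| \ \cmeas{\min(k,\alpha)}(PA) > 0 \big\}, \qquad G' = \big\{ U \in \oproj dk \ \big| \ \cmeas{\min(k,\alpha)}(UA) > 0 \big\},\]
both measurable by Lemma \ref{lem:measurablemap}. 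Theorem \ref{thm:bettermarstrand} gives that $G$ has full measure in $\proj dk$, and the goal is to deduce that $G'$ has full measure in $\oproj dk$.

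The crux of the argument is the observation that for $P \in \proj dk$, the orthogonal projection $U = \obtoorth(P)$ has the \emph{same} null space as $P$: by definition $U$ has range $\nullspace P ^\perp$, so $\nullspace U = (\nullspace P ^\perp)^\perp = \nullspace P$. Writing $N = \nullspace P$ and using the splittings $\R^d = N \oplus \rangespace P = N \oplus \rangespace U$, every $x \in \R^d$ satisfies $x - Px \in N$, whence $Ux = U(Px)$. Thus $U = L \circ P$, where $L = \restr{U}{\rangespace P} \colon \rangespace P \to \rangespace U$ is an isomorphism of two $k$-dimensional subspaces (it is injective since $\rangespace P \cap N = \{0\}$). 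In particular $UA = L(PA)$ for an invertible linear transformation $L$, so Corollary \ref{cor:qiecortwo} yields $\cmeas{\min(k,\alpha)}(PA) \asymp \cmeas{\min(k,\alpha)}(UA)$; positivity of one side is therefore equivalent to positivity of the other. This shows $\obtoorth(G) \subseteq G'$, equivalently $G \subseteq \obtoorth^{-1}(G')$.

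To finish, since $G$ has full measure and $G \subseteq \obtoorth^{-1}(G')$, the set $\obtoorth^{-1}(\oproj dk \setminus G') = \proj dk \setminus \obtoorth^{-1}(G')$ is contained in $\proj dk \setminus G$ and hence is null. By the non-singularity of $\obtoorth$ (Lemma \ref{lem:equivparam}), it follows that $\oproj dk \setminus G'$ is null, i.e. $G'$ has full measure. This establishes the first assertion, and the ``in particular'' statements about $\cdim(UA)$ and $\cmeas k(UA)$ follow from it exactly as in Theorem \ref{thm:bettermarstrand}. The only genuine subtlety is verifying that $P$ and $\obtoorth(P)$ share a null space, so that their images of $A$ differ by an invertible linear map; once this is in hand, Corollary \ref{cor:qiecortwo} together with the non-singularity of $\obtoorth$ does all the remaining work, and no new transversality or measure-theoretic estimates are required.
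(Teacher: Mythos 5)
Your proposal is correct and follows essentially the same route as the paper: both arguments combine Lemma \ref{lem:measurablemap}, Corollary \ref{cor:qiecortwo}, and the non-singularity of $\obtoorth$ from Lemma \ref{lem:equivparam} to transfer the full-measure set from $\proj dk$ to $\oproj dk$. The only cosmetic difference is the direction of the factorization — you write $U = L \circ P$ with $L = \restr{U}{\rangespace P}$ invertible, whereas the paper uses $P \circ U = P$ and the invertibility of $\restr{P}{\rangespace U}$ — and both correctly identify that $P$ and $\obtoorth(P)$ share a null space, so the two versions are interchangeable.
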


\begin{proof}
Let $\obtoorth: \proj dk \longrightarrow \oproj dk$ be as in Lemma \ref{lem:equivparam}. Let $P \in \proj dk$, and set $U = \obtoorth (P)$, $R = \nullspace P ^\perp = \imagespace U$. Since $\restr{P}{R}: R \to \R^k \times \{0\}^{d-k}$ is an invertible linear map, Corollary \ref{cor:qiecortwo} gives that for all $B \subseteq R$,
\[\cmeas \alpha (B) \asymp_{\alpha,d,P} \cmeas \alpha (PB).\]
Since $\imagespace U = R$ and $P\circ U = P$, for all $B \subseteq \R^d$,
\begin{align}\label{eqn:equivoforthandoblique} \cmeas \alpha (UB) \asymp_{\alpha,d,P} \cmeas \alpha (PUB) = \cmeas \alpha (PB).\end{align}

Suppose $\cmeas \alpha (A) > 0$ for some $\alpha \neq k$. Set
\begin{align*}
\goo_{\mathcal{U}} &= \big\{ U \in \oproj dk \ \big| \ \cmeas {\min(k,\alpha)} (UA) > 0 \big\},\\
\goo &= \big\{ P \in \proj dk \ \big| \ \cmeas {\min(k,\alpha)} (PA) > 0 \big\}.
\end{align*}
The sets $\goo_{\mathcal{U}}$ and $\goo$ are measurable by Lemma \ref{lem:measurablemap}, and $\goo$ is of full measure in $\proj dk$ by Theorem \ref{thm:bettermarstrand}.

Since $\obtoorth$ is non-singular and $\goo$ is of full measure, to show that $\goo_{\mathcal{U}}$ is of full measure, it suffices to prove that $\goo \subseteq \obtoorth ^{-1} (\goo_{\mathcal{U}})$. So, let $P \in \goo$. It follows from $\cmeas {\min(k,\alpha)} (PA) > 0$ and (\ref{eqn:equivoforthandoblique}) that $\cmeas {\min(k,\alpha)} \big(\obtoorth (P)A\big) > 0$. Therefore, $\obtoorth (P) \in \goo_{\mathcal{U}}$.
\end{proof}

As before, the proof of the analogous corollary for the mass dimension follows in the same way from Theorem \ref{thm:bettermarstrandformass} and Corollary \ref{cor:qiecortwo}.

\begin{repcor}{cor:orthogmarstrandformass}
Let $A \subseteq \R^d$ be such that $\cdim(A) = \mdim(A)$, and let $0 \leq k \leq d$. For almost every $U \in \oproj dk$,
\[\mdim(UA) = \min \big(k, \mdim(A)\big)\]
and, if $A$ is counting and mass regular and $\mdim(A) \neq k$, then $\mmeas{\min(k,\mdim(A))}(U A) > 0$.
\end{repcor}

\subsection{Applications}

The main application of Theorems \ref{thm:bettermarstrand} and \ref{thm:bettermarstrandformass} is arithmetic, a result of the fact that projections of the product set $A_1 \times \cdots \times A_d \subseteq \R^d$ to $\R \times \{0\}^{d-1}$ are (naturally identified with) sets of the form
\[A_1 + \lambda_1 A_2 + \cdots + \lambda_{d-1} A_d = \big\{ a_1 + \lambda_1 a_2 + \cdots + \lambda_{d-1} a_d \ \big| \ a_i \in A_i \big\}\]
for $\lambda = (\lambda_1, \ldots, \lambda_{d-1}) \in \R^{d-1}$. Understanding the dimension and regularity of $A_1 \times \cdots \times A_d$ gives us information on the dimension and regularity of such sumsets for Lebesgue almost every $\lambda$.

Corollaries \ref{cor:mainapplication} and \ref{cor:mainapplicationformass} below follow immediately from the definitions in Section \ref{sec:products} and from Theorems \ref{thm:bettermarstrand} and \ref{thm:bettermarstrandformass}, respectively. By dilating the set $A_1 + \lambda_1 A_2 + \cdots + \lambda_{d-1} A_d$ by a non-zero constant, the corollaries below may be stated so that the sumset in question is in the homogeneous form $\lambda_1 A_1 + \lambda_2 A_2 + \cdots + \lambda_d A_d$.

The following Corollary is similar to, but more general than, Theorem 1.3 in \cite{lima}.

\begin{cor}\label{cor:mainapplication}
Let $A_i \subseteq \R$, $1 \leq i \leq d$. If $\{A_1, \ldots, A_d\}$ is counting compatible, then for Lebesgue-a.e. $\lambda \in \R^d$,
\[\cdim(\lambda_1 A_1 + \cdots + \lambda_d A_d) \geq \min \big(1, \cdim(A_1) + \cdots + \cdim(A_d) \big)\]
and, if $\sum_i \cdim(A_i) > 1$, $\cmeas 1 (\lambda_1 A_1 + \cdots + \lambda_d A_d) > 0$. If $\{A_1, \ldots, A_d\}$ is strongly counting compatible and $\sum_i \cdim(A_i) \neq 1$, then for Lebesgue-a.e. $\lambda \in \R^d$,
\[\cmeas{\min(1, \sum_i \cdim(A_i))} (\lambda_1 A_1 + \cdots + \lambda_d A_d) > 0.\]
\end{cor}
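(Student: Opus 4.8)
The plan is to apply Theorem \ref{thm:bettermarstrand} to the product set $A = A_1 \times \cdots \times A_d \subseteq \R^d$ with $k = 1$ and then translate the resulting statement about projections into one about sumsets. First I would recall from (\ref{eqn:explicitformulaforproj}) that for $M = (\mu_1, \ldots, \mu_{d-1}) \in \matspace 1{(d-1)}$ the projection $\pmat \in \proj d1$ satisfies $\pone \circ \pmat = \pone + M \circ \ptwo$. Evaluating on a point $(a_1, \ldots, a_d) \in A$ gives $\pone(\pmat(a_1,\ldots,a_d)) = a_1 + \mu_1 a_2 + \cdots + \mu_{d-1} a_d$, so after identifying the range $\R \times \{0\}^{d-1}$ with $\R$ via $\pone$ we obtain $\pone(\pmat A) = A_1 + \mu_1 A_2 + \cdots + \mu_{d-1} A_d$. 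Since $\pone$ restricted to the range is an invertible linear map, Corollary \ref{cor:qiecortwo} shows that $\cdim(\pmat A)$ equals the counting dimension of this inhomogeneous sumset and that the relevant counting measures agree up to multiplicative constants (so positivity is preserved).

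Next I would insert the compatibility hypotheses. Counting compatibility of $\{A_1, \ldots, A_d\}$ means, by Definition \ref{def:compatible} and Corollary \ref{cor:prodsets}, that $\cdim(A) = \sum_i \cdim(A_i)$. Hence the ``in particular'' clause of Theorem \ref{thm:bettermarstrand} gives, for almost every $M$, $\cdim(\pmat A) \geq \min\big(1, \sum_i \cdim(A_i)\big)$, together with $\cmeas 1(\pmat A) > 0$ whenever $\sum_i \cdim(A_i) > 1 = k$. For the final assertion, strong counting compatibility means each $A_i$ is $\cdim(A_i)$-counting regular and $A$ is $\alpha$-counting regular with $\alpha = \sum_i \cdim(A_i)$; in particular $\cmeas{\alpha}(A) > 0$, and since $\alpha \neq 1 = k$ by hypothesis, the first assertion of Theorem \ref{thm:bettermarstrand} yields $\cmeas{\min(1,\alpha)}(\pmat A) > 0$ for almost every $M$. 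Via the identification of the previous paragraph, all three conclusions now hold for the inhomogeneous sumset $A_1 + \mu_1 A_2 + \cdots + \mu_{d-1} A_d$ for Lebesgue-a.e.\ $\mu$, using that by Section \ref{sec:bettermeasuresonspaces} the measure $\projm d1$ is equivalent to Lebesgue measure on $\matspace 1{(d-1)} \cong \R^{d-1}$ under $\purerestmatmap B$, so ``almost every $P$'' is the same as ``Lebesgue-a.e.\ $\mu \in \R^{d-1}$''.

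The remaining step is to pass from the inhomogeneous parameterization by $\mu \in \R^{d-1}$ to the homogeneous form $\lambda_1 A_1 + \cdots + \lambda_d A_d$ with $\lambda \in \R^d$, and this measure-theoretic bookkeeping is the part I expect to require the most care. For $\lambda_1 \neq 0$, dilation by $\lambda_1^{-1}$ together with Corollary \ref{cor:qiecortwo} shows that $\cdim$ and the positivity of the relevant counting measure of $\lambda_1 A_1 + \cdots + \lambda_d A_d$ agree with those of $A_1 + (\lambda_2/\lambda_1) A_2 + \cdots + (\lambda_d/\lambda_1) A_d$. Letting $G \subseteq \R^{d-1}$ denote the full-measure set of good $\mu$ produced above, it then suffices to verify that the set $\{\lambda \in \R^d : \lambda_1 \neq 0,\ (\lambda_2/\lambda_1, \ldots, \lambda_d/\lambda_1) \notin G\}$ is Lebesgue-null. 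This follows from Fubini's theorem: for each fixed $\lambda_1 \neq 0$ the slice map $(\lambda_2, \ldots, \lambda_d) \mapsto (\lambda_2/\lambda_1, \ldots, \lambda_d/\lambda_1)$ is a linear isomorphism of $\R^{d-1}$, so it pulls the null set $\R^{d-1} \setminus G$ back to a null set, and integrating over $\lambda_1$ gives that the full exceptional set in $\R^d$ is null. Combining this with the conclusions of the second paragraph completes the proof.
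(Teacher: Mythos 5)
Your argument is correct and is exactly the route the paper intends: the paper derives Corollary \ref{cor:mainapplication} by applying Theorem \ref{thm:bettermarstrand} with $k=1$ to the product $A_1 \times \cdots \times A_d$, identifying projections in $\proj d1$ with the inhomogeneous sumsets $A_1 + \mu_1 A_2 + \cdots + \mu_{d-1}A_d$ via (\ref{eqn:explicitformulaforproj}) and Corollary \ref{cor:qiecortwo}, translating the compatibility hypotheses through Definition \ref{def:compatible}, and passing to the homogeneous form by dilation. You have simply made explicit the measure-theoretic bookkeeping (equivalence of $\projm d1$ with Lebesgue measure on $\matspace 1{(d-1)}$, and the Fubini step for the homogenization) that the paper leaves as immediate.
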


\begin{cor}\label{cor:mainapplicationformass}
For each $1 \leq i \leq d$, let $A_i \subseteq \R$ be such that $\cdim(A_i) = \mdim(A_i)$. If $\{A_1, \ldots, A_d\}$ is counting and mass compatible, then for Lebesgue-a.e. $\lambda \in \R^d$,
\[\mdim(\lambda_1 A_1 + \cdots + \lambda_d A_d) = \min \big(1, \mdim(A_1) + \cdots + \mdim(A_d) \big).\]
If $\{A_1, \ldots, A_d\}$ is strongly counting and strongly mass compatible and $\sum_i \mdim(A_i) \neq 1$, then for Lebesgue-a.e. $\lambda \in \R^d$,
\[\mmeas{\min(1, \sum_i \mdim(A_i))} (\lambda_1 A_1 + \cdots + \lambda_d A_d) > 0.\]
\end{cor}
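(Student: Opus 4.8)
The plan is to deduce the corollary by applying Theorem \ref{thm:bettermarstrandformass} with $k = 1$ to the product set $A = A_1 \times \cdots \times A_d \subseteq \R^d$ and then translating its conclusion about projections of $A$ into a statement about the sumsets $\lambda_1 A_1 + \cdots + \lambda_d A_d$. First I would verify the standing hypothesis $\cdim(A) = \mdim(A)$ of Theorem \ref{thm:bettermarstrandformass}. Since the collection $\{A_1, \ldots, A_d\}$ is counting and mass compatible, Definition \ref{def:compatible} gives $\cdim(A) = \sum_i \cdim(A_i)$ and $\mdim(A) = \sum_i \mdim(A_i)$; combined with the assumption $\cdim(A_i) = \mdim(A_i)$ for each $i$, this yields $\cdim(A) = \mdim(A) = \sum_i \mdim(A_i)$, as required.

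With this in hand, Theorem \ref{thm:bettermarstrandformass} with $k = 1$ gives that for almost every $P \in \proj d1$,
\[\mdim(PA) = \min\big(1, \mdim(A)\big) = \min\Big(1, \sum_i \mdim(A_i)\Big),\]
and, if $A$ is counting and mass regular and $\sum_i \mdim(A_i) \neq 1$, that $\mmeas{\min(1, \mdim(A))}(PA) > 0$. For the second assertion I would note that strong counting and strong mass compatibility of $\{A_1, \ldots, A_d\}$ mean precisely that $A$ is $\big(\sum_i \cdim(A_i)\big)$-counting regular and $\big(\sum_i \mdim(A_i)\big)$-mass regular; since these exponents equal $\cdim(A)$ and $\mdim(A)$ respectively, $A$ is counting and mass regular, so Theorem \ref{thm:bettermarstrandformass} applies. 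Next I would identify the projected sets with sumsets: writing $\R^d = \R \oplus \R^{d-1}$ and using the flat parameterization from Section \ref{sec:bettermeasuresonspaces}, a projection $P = \pmat \in \proj d1$ with $M = (\mu_1, \ldots, \mu_{d-1}) \in \matspace 1{(d-1)}$ satisfies $\pone \circ \pmat (a) = a_1 + \mu_1 a_2 + \cdots + \mu_{d-1} a_d$ by (\ref{eqn:explicitformulaforproj}), so upon identifying the range $\R \times \{0\}^{d-1}$ with $\R$ we have $PA = A_1 + \mu_1 A_2 + \cdots + \mu_{d-1} A_d$.

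It remains to pass from ``almost every $P \in \proj d1$'' to ``Lebesgue-almost every $\lambda \in \R^d$.'' By the discussion in Section \ref{sec:bettermeasuresonspaces}, the measure on $\proj d1$ is equivalent to Lebesgue measure on $\matspace 1{(d-1)} = \R^{d-1}$, so the two displays above hold for Lebesgue-almost every $\mu \in \R^{d-1}$. To reach the homogeneous form, observe that whenever $\lambda_1 \neq 0$,
\[\lambda_1 A_1 + \cdots + \lambda_d A_d = \lambda_1\big(A_1 + (\lambda_2/\lambda_1) A_2 + \cdots + (\lambda_d/\lambda_1) A_d\big),\]
and by Corollary \ref{cor:qiecortwo} dilation by the nonzero constant $\lambda_1$ preserves the mass dimension as well as the finiteness and positivity of the mass measures. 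Thus the result follows once I show that the map $\Phi(\lambda) = (\lambda_2/\lambda_1, \ldots, \lambda_d/\lambda_1)$, defined on $\{\lambda_1 \neq 0\}$, pulls Lebesgue-null subsets of $\R^{d-1}$ back to Lebesgue-null subsets of $\R^d$. This is the one point requiring genuine care rather than a direct appeal to an earlier result, and I expect it to be the only mild obstacle: for each fixed $\lambda_1 \neq 0$ the slice map $(\lambda_2, \ldots, \lambda_d) \mapsto \Phi(\lambda)$ is the diffeomorphism given by scaling by $\lambda_1^{-1}$, so the slice of the pullback of a null set is null within each hyperplane $\{\lambda_1 = c\}$; Fubini's theorem, together with the fact that $\{\lambda_1 = 0\}$ is null in $\R^d$, then shows the full pullback is null in $\R^d$. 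Combining this with the displays above completes the proof for Lebesgue-almost every $\lambda \in \R^d$, the entire analytic content being supplied by Theorem \ref{thm:bettermarstrandformass} and the compatibility definitions.
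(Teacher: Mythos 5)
Your proposal is correct and follows essentially the same route as the paper, which derives this corollary directly from Theorem \ref{thm:bettermarstrandformass} applied to $A_1 \times \cdots \times A_d$ together with the compatibility definitions, the identification of $\proj d1$ with $\R^{d-1}$ from Section \ref{sec:bettermeasuresonspaces}, and the dilation remark for passing to the homogeneous form. Your write-up simply makes explicit the steps (in particular the Fubini argument for the homogenization) that the paper leaves as ``follows immediately.''
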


The following more concrete examples follow immediately from the previous two corollaries using Examples \ref{ex:compatcollections} from Section \ref{sec:products}.

\begin{examples}
The statements below hold for Lebesgue-a.e. $\lambda \in \R^d$, where the set of exceptional $\lambda$'s depends on the specific sets in each example.
\begin{enumerate}[i.]
\item For $P$ the set of prime numbers and $f$ a real, non-constant polynomial,
\[d^* \big(\lfloor P + \lambda f(\Z) \rfloor \big) > 0.\]

\item For $A \subseteq \R$,
\[\cdim(\lambda_1 A + \cdots + \lambda_d A) \geq \min \big(1, d \cdim(A) \big).\]
Moreover,
\begin{enumerate}[a.]
\item if $d \cdim(A) > 1$, then $\cmeas 1 (\lambda_1 A + \cdots + \lambda_d A) > 0$;
\item if $A$ is counting regular and $dD(A) \neq 1$, then $\cmeas{\min \left(1, d \cdim(A) \right)}(\lambda_1 A + \cdots + \lambda_d A) > 0$;
\item if $\cdim(A) = \mdim(A)$, then $\mdim(\lambda_1 A + \cdots + \lambda_d A) = \min \big(1, d \mdim(A) \big)$;
\item if $\cdim(A) = \mdim(A)$, $A$ is counting and mass regular, and $d\mdim(A) \neq 1$, then $\cmeas{\min \left(1, d \mdim(A) \right)}(\lambda_1 A + \cdots + \lambda_d A) > 0$.
\end{enumerate}

\item For non-constant polynomials $f_1, \ldots, f_d \in \R[x]$,
\[\mdim \big( \lambda_1 f_1(\Z) + \cdots + \lambda_d f_d(\Z) \big) = \min \left(1, \frac{1}{\deg f_1} + \cdots + \frac{1}{\deg f_d} \right).\]
Moreover, if $\sum_i (\deg f_i)^{-1} \neq 1$, then
\[\mmeas{\min \left(1,\sum_i (\deg f_i)^{-1} \right)} \big(\lambda_1 f_1(\Z) + \cdots + \lambda_d f_d(\Z) \big) > 0.\]
This example in the special case that $f_i \in \Z[x]$ is essentially Theorem 1.1 in \cite{lima}.

\item This final example is meant to demonstrate the generality of the results in this paper using examples from Section \ref{sec:products}. Let $A \subseteq \R$. The integer Cantor set $C$ consisting of non-negative integers that may be written in base 7 using only the digits 0 and 6 has counting and mass dimension $\log 2 \big / \log 7$ and is counting and mass universal. The generalized IP set
\[E = \left\{ \sum_{i=1}^n x_i 2^{i^2} \ \middle| \ n \in \N, \ 0 \leq x_i < 2^i \right\}\]
has counting and mass dimension $1/2$ and is counting and mass universal since it satisfies (\ref{eqn:genipcondition}). It follows that
\[\cdim(\lambda_1 A + \lambda_2 C + \lambda_3 E) \geq \min \left(1, \cdim(A) + \frac{\log 2}{\log 7} + \frac{1}{2} \right).\]
Moreover, if $\cdim(A) = \mdim(A)$, then
\[\mdim(\lambda_1 A + \lambda_2 C + \lambda_3 E) = \min \left(1, \mdim(A) + \frac{\log 2}{\log 7} + \frac{1}{2} \right).\]
\end{enumerate}
\end{examples}

\bibliographystyle{plain}
\bibliography{marstrand-type_theorems}

\end{document}